\pgfplotsset{compat=1.18} 
\numberwithin{equation}{section}
\newcommand{\T}{\mathbb{T}}
\newcommand{\conj}[1]{\overline{#1}}
\newcommand{\D}{\mathbb{D}}
\newcommand{\R}{\mathbb{R}}
\newcommand{\cD}{\conj{\mathbb{D}}}
\newcommand{\ip}[2]{\big\langle #1, #2 \big\rangle}
\newcommand{\m}{\textit{m}}
\newcommand{\hb}{\mathcal{H}(b)}
\newcommand{\h}{\mathcal{H}}
\renewcommand\Re{\operatorname{Re}}
\newcommand{\supp}[1]{\text{supp}({#1})}
\newtheorem{mainthm}{Theorem}
\newtheorem{thm}{Theorem}[section]
\newtheorem*{thm*}{Theorem}
\newtheorem{lem}[thm]{Lemma}
\newtheorem{cor}[thm]{Corollary}
\newtheorem*{cor*}{Corollary}
\theoremstyle{definition}
\theoremstyle{definition}
\newtheorem{remark}[thm]{Remark}
\newtheorem*{defn*}{Definition}
\newtheorem*{question*}{Question}
\newtheorem{claim*}{Claim}
\let\oldtocsection=\tocsection
\let\oldtocsubsection=\tocsubsection
\let\oldtocsubsubsection=\tocsubsubsection
\renewcommand{\tocsection}[2]{\hspace{0em}\oldtocsection{#1}{#2}}
\renewcommand{\tocsubsection}[2]{\hspace{1em}\oldtocsubsection{#1}{#2}}
\renewcommand{\tocsubsubsection}[2]{\hspace{2em}\oldtocsubsubsection{#1}{#2}}
\def\section{\@startsection{section}{1}%
\z@{.7\linespacing\@plus\linespacing}{.5\linespacing}%
{\large\scshape\centering}}
\title{Tangential boundary behavior in Hilbert spaces of analytic functions}
\author{Shuaibing Luo}
\address{School of Mathematics, Hunan University, Changsha, 410082, PR China}
\email{sluo@hnu.edu.cn}
\author{Bartosz Malman}
\address{Division of Mathematics and Physics, M\"{a}lardalen University, 721 23 V\"{a}ster{\aa}s, Sweden}
\email{bartosz.malman@mdu.se}
\begin{document}

\begin{abstract}
Sarason's Hilbert space version of Carathéodory-Julia Theorem connects the non-tangential boundary behavior of functions in de Branges-Rovnyak space $\hb$ with the existence of angular derivatives in the sense of Carathéodory for $b$, an analytic self-mapping of the unit disk. In this article, we continue the study of higher order extensions of this result that deal with derivatives of functions in $\hb$, and we consider notions of approach regions more general than the non-tangential ones. Our main result generalizes the recent work of Duan-Li-Mashreghi on boundary behavior in model spaces to $\hb$-spaces and to higher order derivatives, and we give a new self-contained proof of that result. It also generalizes earlier radial results of Fricain-Mashreghi. In relation to existence of angular derivatives, we show that in the classical Carathéodory-Julia Theorem one cannot replace the non-tangential approach region by any essentially larger region.
\end{abstract}

\thanks{Luo is supported by NNSFC (12271149), Natural Science
Foundation of Hunan Province (2024JJ2008).  Malman is supported by Vetenskapsrådet (VR2024-03959).}

\maketitle

\tableofcontents

\section{Background}

\subsection{Angular derivatives in the sense of Carathéodory.}

Let $\D := \{ z \in \mathbb{C} : |z| < 1\}$ denote the unit disk. We single out the point $1$ on its boundary circle $\T := \{ z \in \mathbb{C} : |z| = 1\}$ and consider inside $\D$ the ordinary non-tangential approach region $\Gamma$ with vertex at $1$:
\begin{equation}
    \label{E:GammaRegionDef}
    \Gamma = \Gamma(c) = \{ z \in \D : 1-|z| > c|\arg z| \}, \quad c > 0, \, \arg z \in (-\pi, \pi]
\end{equation} (parameter $c$ being fixed but arbitrary). Our choice of $1 \in \T$ as our base point is done for notational convenience. It can be replaced by any other point $\zeta_o \in \T$. The \textit{angular derivative at the point  $1$} of an analytic function $f$ defined in $\D$ is the non-tangential limit \[ f'(1) := \lim_{\substack{z \to 1 \\ z \in \Gamma}} f'(z),\] if it exists. It is widely known that the existence of the limit does not depend on the parameter $c$ used to define $\Gamma$. 

A specialized notion of angular derivative exists in the context of analytic self-mappings of $\D$. We say that $b: \D \to \D$ has an angular derivative \textit{in the sense of Carathéodory at the point $1$} if $b$ has an angular derivative at 1, and additionally $b(1) := \lim_{z \to 1, z \in \Gamma} b(z)$ is unimodular, i.e., $|b(1)| = 1$. The famous Carathéodory-Julia Theorem from \cite{caratheodory1929winkelderivierten} and \cite{julia1920extension} reads as follows.

\begin{thm}{\textbf{(Carathéodory-Julia)}} \thlabel{T:CJTheorem}
Let $b: \D \to \D$ be analytic. The following two conditions are equivalent.
\begin{enumerate}[(i)]
\item The function $b$ has an angular derivative in the sense of Carathéodory at the point $1$.
\item There exists a sequence $(z_n)_n$ of points in $\D$ which converges to $1$ and satisfies 
\begin{equation}
    \label{E:KernelNormBoundSequenceTo1}
    \sup_n \frac{1-|b(z_n)|^2}{1-|z_n|^2} < \infty.
\end{equation} 
\end{enumerate}
\end{thm}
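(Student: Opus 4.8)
This is the classical Carath\'eodory--Julia theorem; I would prove the two implications separately, with essentially all the work going into (ii)~$\Rightarrow$~(i). The soft direction (i)~$\Rightarrow$~(ii) runs as follows: write $\eta := b(1)$, so $|\eta| = 1$, and let $b'(1)$ denote the non-tangential limit of $b'$; integrating $b'$ along the segment from $z$ to $1$ and using $b'(z) \to b'(1)$ non-tangentially --- a boundary form of l'H\^opital's rule --- the quotient $\frac{b(z)-\eta}{z-1}$ also tends to $b'(1)$ non-tangentially, hence stays bounded along the radius near $1$. Taking $z_n := 1 - 1/n$ and combining $1 - |b(z_n)|^2 \le 2|\eta - b(z_n)|$ with $1 - |z_n|^2 \ge 1 - z_n$ gives $\frac{1-|b(z_n)|^2}{1-|z_n|^2} \le 2\,\frac{|\eta - b(z_n)|}{1-z_n}$, which is bounded; so \eqref{E:KernelNormBoundSequenceTo1} holds along this radial sequence.

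For (ii)~$\Rightarrow$~(i), after passing to a subsequence I may assume $\frac{1-|b(z_n)|^2}{1-|z_n|^2} \to d \in [0,\infty)$ and $b(z_n) \to \eta$; since $1 - |b(z_n)|^2 = \frac{1-|b(z_n)|^2}{1-|z_n|^2}\,(1-|z_n|^2) \to 0$, we have $|\eta| = 1$. Feeding $w = z_n$ into the Schwarz--Pick inequality, rewriting it as
\[
    \frac{|1 - \overline{b(z_n)}\,b(z)|^2}{1-|b(z)|^2} \;\le\; \frac{1-|b(z_n)|^2}{1-|z_n|^2}\cdot\frac{|1-\overline{z_n}\,z|^2}{1-|z|^2}, \qquad z \in \D,
\]
and letting $n \to \infty$ yields \emph{Julia's inequality}
\[
    \frac{|1-\overline{\eta}\,b(z)|^2}{1-|b(z)|^2} \;\le\; d\,\frac{|1-z|^2}{1-|z|^2}, \qquad z \in \D,
\]
in which $d > 0$ because the left-hand side is strictly positive at $z = 0$. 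Since $\frac{|1-z|^2}{1-|z|^2} \to 0$ as $z \to 1$ inside $\Gamma$, this already forces $b(z) \to \eta$ non-tangentially; and combined with $1 - |b(z)|^2 \le 2\,|1-\overline{\eta}\,b(z)|$ it gives $|1-\overline{\eta}\,b(z)| \le 2d\,\frac{|1-z|^2}{1-|z|^2}$, whence $F(z) := \frac{b(z)-\eta}{z-1}$ is \emph{bounded} on $\Gamma$ (using $\frac{|1-z|}{1-|z|} \le \sqrt{1+1/c^2}$ on $\Gamma(c)$).

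It remains to extract the derivative, which I would do via Herglotz functions. Julia's inequality says precisely that $\Re\bigl(\frac{1+\overline{\eta}\,b(z)}{1-\overline{\eta}\,b(z)} - \frac{1}{d}\,\frac{1+z}{1-z}\bigr) \ge 0$ on $\D$, so this difference equals $i\beta + \int_{\T} \frac{\zeta + z}{\zeta - z}\,d\mu(\zeta)$ for some real $\beta$ and some finite positive measure $\mu$. Using the standard point-mass limit $(1-z)\int_{\T}\frac{\zeta+z}{\zeta-z}\,d\mu(\zeta) \to 2\mu(\{1\})$ as $z \to 1$ non-tangentially (dominated convergence after splitting off the atom of $\mu$ at $1$), one multiplies through by $1-z$ and solves for $\frac{1-z}{1-\overline{\eta}\,b(z)}$ to obtain $F(z) = \eta\,\frac{1-\overline{\eta}\,b(z)}{1-z} \to \eta\lambda$ non-tangentially, with $\lambda = (\mu(\{1\}) + 1/d)^{-1} \in (0,\infty)$. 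Finally, $b = \eta + (z-1)F$ gives $b' = F + (z-1)F'$, and since $F$ is bounded on $\Gamma$ with non-tangential limit $\eta\lambda$, expressing $F'(z)$ as the Cauchy integral of $F - \eta\lambda$ over a circle of radius $\rho|1-z|$ contained in a slightly wider Stolz angle shows $(1-z)F'(z) \to 0$ non-tangentially. Hence $b'(z) \to \eta\lambda$ non-tangentially, and together with $b(z) \to \eta$, $|\eta| = 1$, this is exactly (i).

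The one genuine obstacle is the passage from the \emph{qualitative} Julia inequality to the \emph{quantitative} existence of the non-tangential limit of $F(z) = \frac{b(z)-\eta}{z-1}$; everything else is Schwarz--Pick bookkeeping or a Cauchy estimate. The Herglotz representation, together with the point-mass limit lemma for Herglotz functions, is the natural engine; a normal-families argument (Lindel\"of--Lehto--Virtanen applied to the bounded family $\{F(1 + t(z-1))\}_{t>0}$) is an alternative. I would also note that the whole argument can equivalently be organized inside the de Branges--Rovnyak space $\hb$ --- there hypothesis (ii) asserts that the reproducing kernels at the points $z_n$ have bounded norm, and the conclusion becomes a statement about boundary values of $\hb$-functions --- which is Sarason's viewpoint and the one best adapted to the higher-order extensions of this paper.
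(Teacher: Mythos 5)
Your proof is correct. Note, however, that the paper does not actually prove this theorem: it is stated as classical background, with only the one-line observation for $(i)\Rightarrow(ii)$ (identical to yours, via $1-|b(r)|\le |b(1)-b(r)|=\mathcal{O}(1-r)$ along the radius) and a pointer to Sarason's paper for $(ii)\Rightarrow(i)$. The route the paper has in mind for the hard direction is Sarason's: interpret \eqref{E:KernelNormBoundSequenceTo1} as boundedness of $\|k^b_{z_n}\|_b$, extract a weak limit $k^b_1\in\hb$, and read off the angular derivative from the boundary kernel — the viewpoint you mention only in your closing remark, and the one that drives the higher-order generalizations in the rest of the paper. What you give instead is the classical argument: Schwarz--Pick at the points $z_n$ passed to the limit to obtain Julia's inequality, from which boundedness of $F(z)=\frac{b(z)-\eta}{z-1}$ on Stolz angles follows, then the Herglotz representation of $\frac{1+\bar\eta b}{1-\bar\eta b}-\frac{1}{d}\frac{1+z}{1-z}$ together with the point-mass limit lemma to produce the non-tangential limit of $F$, and finally a Cauchy estimate on a disk of radius comparable to $|1-z|$ to upgrade this to convergence of $b'$. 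All the steps check out (including the positivity of $d$, the domination $|1-z|/|\zeta-z|\le 1+1/c$ on $\Gamma(c)$ needed for the point-mass lemma, and the $(1-z)F'(z)\to 0$ step), so this is a complete and self-contained proof; it is simply the function-theoretic rather than the operator-theoretic road to the same destination, and it buys explicitness (the value $b'(1)=\eta\lambda$ with $\lambda^{-1}=\mu(\{1\})+1/d$) at the cost of not setting up the Hilbert-space machinery the paper builds on.
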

It is known that \eqref{E:KernelNormBoundSequenceTo1} is in fact equivalent to a similar condition
\[ \sup_{z \in \Gamma} \frac{1-|b(z)|^2}{1-|z|} < \infty.\]

The implication $(i) \Rightarrow (ii)$ is immediately seen to hold by setting $z = r \in \R \cap \D$ and noting that $1-|b(r)|$ is dominated by $|b(1) - b(r)|$, a quantity which is $\mathcal{O}(1-r)$ under $(i)$. 
An insightful proof of implication $(ii) \Rightarrow (i)$ has been given by Sarason in \cite{sarason1988angular}, who identified the quantity in $(ii)$ as the norm of the functional of evaluation in a certain Hilbert space of analytic functions, and in effect connected the geometric notion of angular derivatives with operator theory. The de Branges-Rovnyak space $\hb$ is the Hilbert space of analytic functions on $\D$ characterized by the property that it contains the \textit{kernel functions} 
\begin{equation}
    \label{E:HbKernelEq}
    k^b_z(w) := \frac{1-\conj{b(z)}b(w)}{1-\conj{z}w}, \quad z, w \in \D
\end{equation} and its inner product $\ip{\cdot}{\cdot}_b$ satisfies 
\begin{equation}
    \label{E:ReproducingPropertyHB}
    f(z) = \ip{f}{k^b_z}_b, \quad f \in \hb.
\end{equation} 
Good references for background on $\hb$-spaces is Sarason's book \cite{sarasonbook} and the more recent extensive two-volume monograph by Fricain and Mashreghi \cite{hbspaces1fricainmashreghi}, \cite{hbspaces2fricainmashreghi}.

It follows from the reproducing property in \eqref{E:ReproducingPropertyHB} that
\begin{equation}
    \label{E:HbNormEq} 
    \|k^b_z\|_b^2 = \ip{k^b_z}{k^b_z}_b = \frac{1-|b(z)|^2}{1-|z|^2}. 
\end{equation} Under condition $(ii)$ of \thref{T:CJTheorem}, the \textit{boundary kernel function}
\begin{equation}
    \label{E:BoundaryKernelEq}
    k^b_1(w) := \frac{1-\conj{b(1)}b(w)}{1-w}, \quad w \in \D
\end{equation} can be identified as a weak limit of a subsequence of the $(k^b_{z_n})_n$, and is thus a member of $\hb$. Naturally, it should have the evaluation property that $\ip{f}{k^b_1}_b = f(1)$. Indeed this is so, if $f(1)$ is duly interpreted. The following result appears in \cite{sarason1988angular} and is reproduced in Sarason's book \cite{sarasonbook}.

\begin{thm}{\textbf{(Sarason's extension)}}
The following condition is equivalent to the two conditions in the Carathéodory-Julia Theorem.
\begin{enumerate}
    \item[(iii)] We have norm convergence of $k^b_z \to k^b_1$ in $\hb$ as $z \to 1$ in $\Gamma$. In particular, for every $f \in \hb$, the non-tangential limit
    \[ f(1) := \lim_{\substack{z \to 1\\ z \in \Gamma}} f(z)\] exists and satisfies $\ip{f}{k^b_1}_b = f(1)$.
\end{enumerate}
\end{thm}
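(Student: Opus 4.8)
The plan is to establish the equivalence of (iii) with condition (ii) of the Carathéodory–Julia Theorem; since \thref{T:CJTheorem} is available, this is the same as equivalence with (i). The implication (iii) $\Rightarrow$ (ii) is immediate: norm convergence $k^b_z \to k^b_1$ in $\hb$ as $z \to 1$ in $\Gamma$ in particular forces boundedness of $(\|k^b_{z_n}\|_b)_n$ along the radial sequence $z_n = 1-1/n \in \Gamma$, which by \eqref{E:HbNormEq} is exactly \eqref{E:KernelNormBoundSequenceTo1}. For the converse I would assume (i), so that $b$ has a finite angular derivative $b'(1)$ and a unimodular angular limit $b(1)$ at $1$, and I would invoke the quantitative part of Julia–Carathéodory theory: setting $\beta := |b'(1)|$, one has $\overline{b(1)}\,b'(1) = \beta$ and $\tfrac{1-|b(z)|}{1-|z|} \to \beta$ as $z \to 1$ nontangentially.

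\textbf{Uniform bound and weak convergence.} First I would note that $\|k^b_z\|_b^2 = \tfrac{1-|b(z)|^2}{1-|z|^2} = \tfrac{1-|b(z)|}{1-|z|}\cdot\tfrac{1+|b(z)|}{1+|z|} \to \beta$ as $z \to 1$ in $\Gamma$, and since this quantity is continuous on each set $\Gamma \cap \{|z| \le 1-\varepsilon\}$ (compact in $\D$), it follows that $M := \sup_{z \in \Gamma} \|k^b_z\|_b < \infty$ (alternatively one cites Julia's lemma). Now fix any sequence $z_n \to 1$ in $\Gamma$. By this bound $(k^b_{z_n})_n$ has a weakly convergent subsequence, with limit $g \in \hb$; applying the reproducing property \eqref{E:ReproducingPropertyHB} on both sides, $g(w) = \ip{g}{k^b_w}_b = \lim_n \ip{k^b_{z_n}}{k^b_w}_b = \lim_n k^b_{z_n}(w) = \tfrac{1-\overline{b(1)}b(w)}{1-w} = k^b_1(w)$ for every $w \in \D$, where the fourth equality uses $b(z_n) \to b(1)$. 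Hence $g = k^b_1$, so $k^b_1 \in \hb$; and since every subsequence of $(k^b_{z_n})_n$ has a further subsequence converging weakly to the same limit $k^b_1$, in fact $k^b_z \rightharpoonup k^b_1$ as $z \to 1$ in $\Gamma$.

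\textbf{Upgrading to norm convergence.} Knowing $k^b_1 \in \hb$, the reproducing property gives $\ip{k^b_z}{k^b_1}_b = \overline{k^b_1(z)}$, and $k^b_1(z) = \overline{b(1)}\cdot\tfrac{b(1)-b(z)}{1-z} \to \overline{b(1)}\,b'(1) = \beta$ as $z \to 1$ in $\Gamma$ (a standard form of the angular derivative). Pairing the weak convergence $k^b_z \rightharpoonup k^b_1$ against the vector $k^b_1$ yields $\|k^b_1\|_b^2 = \lim_{z\to1,\,z\in\Gamma} \ip{k^b_z}{k^b_1}_b = \beta$. Combining this with $\|k^b_z\|_b^2 \to \beta$ from the previous step,
\begin{equation*}
\|k^b_z - k^b_1\|_b^2 = \|k^b_z\|_b^2 - 2\Re\ip{k^b_z}{k^b_1}_b + \|k^b_1\|_b^2 \longrightarrow \beta - 2\beta + \beta = 0,
\end{equation*}
so $k^b_z \to k^b_1$ in $\hb$-norm as $z \to 1$ in $\Gamma$. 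Finally, for $f \in \hb$, continuity of the inner product gives $\lim_{z\to1,\,z\in\Gamma} f(z) = \lim_{z\to1,\,z\in\Gamma} \ip{f}{k^b_z}_b = \ip{f}{k^b_1}_b$, so the nontangential limit $f(1)$ exists and $\ip{f}{k^b_1}_b = f(1)$, completing (i) $\Rightarrow$ (iii).

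\textbf{Main obstacle.} The only non-formal ingredient is the quantitative Julia–Carathéodory data invoked above — that $\overline{b(1)}b'(1)$ is a nonnegative real equal to $\lim_{z\to1}\tfrac{1-|b(z)|}{1-|z|}$ — which is exactly what pins down both $\lim\|k^b_z\|_b^2$ and $\|k^b_1\|_b^2$ to the common value $\beta$. Without it, weak compactness and the reproducing property still give weak convergence and, by weak lower semicontinuity, the inequality $\|k^b_1\|_b^2 \le \liminf\|k^b_z\|_b^2$, but not the matching upper bound needed to promote weak convergence to norm convergence; everything else is soft, relying only on the Hilbert space fact that weak convergence together with convergence of norms implies norm convergence.
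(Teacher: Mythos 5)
Your proof is correct, but it is worth being clear about what it does and does not accomplish relative to the source. The paper does not prove this theorem at all; it cites Sarason, whose point was to run the argument in the opposite direction: starting from the kernel-norm bound (ii), one extracts $k^b_1$ as a weak cluster point, proves norm convergence by intrinsic Hilbert-space estimates, and thereby obtains a \emph{new proof} of the hard implication $(ii)\Rightarrow(i)$ of the classical theorem (this is also the architecture of the paper's Theorems A and B, where the matching value of $\lim\|\conj{\partial_z}^m k^b_z\|_b^2$ is produced from the measure-theoretic condition $(iii)$ rather than from classical function theory; compare \thref{L:TheoremBLemma1}). You instead prove $(iii)\Rightarrow(ii)$ trivially and $(i)\Rightarrow(iii)$ by consuming the full quantitative Julia--Carath\'eodory package: that the difference quotient $\frac{b(z)-b(1)}{z-1}$ converges nontangentially to $b'(1)$, that $\conj{b(1)}b'(1)=\beta\geq 0$, and that $\frac{1-|b(z)|}{1-|z|}\to\beta$ nontangentially (not merely that the liminf is finite). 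Since $(i)\Leftrightarrow(ii)$ is already on the books as \thref{T:CJTheorem}, the resulting chain $(iii)\Rightarrow(ii)\Rightarrow(i)\Rightarrow(iii)$ is logically complete, and your "weak convergence plus convergence of norms implies norm convergence" mechanism is exactly the standard one; the identifications $\lim_{z\to1,\,z\in\Gamma}\|k^b_z\|_b^2=\beta=\|k^b_1\|_b^2$ are carried out correctly. The one caveat is that the quantitative facts you invoke go beyond what the paper's statement of \thref{T:CJTheorem} records, so your argument is not self-contained relative to the paper and, unlike Sarason's, yields no independent route to $(ii)\Rightarrow(i)$; you flag this honestly as the "main obstacle," and it is indeed the only external input.
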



\subsection{Other equivalent conditions and higher order analogs.}
Consider the Nevanlinna factorization of $b$ as \begin{equation}
    \label{E:bInnerOuterFact}
    b(z) = B(z) S_\nu(z) b_o(z),
\end{equation} where $B$ is a Blaschke product with zero sequence $(a_n)_n$,
\[ B(z) = \prod_n \frac{|a_n|}{a_n}\frac{a_n-z}{1-\conj{a_n}z},\]
$S_\nu$ is the singular inner factor
\[ S_{\nu}(z) = \exp \Big(-\int_\T \frac{\zeta + z}{\zeta - z} d\nu(\zeta) \Big),\]
and $b_o$ is the outer factor
\[b_o(z) = \exp \Big(\int_\T \frac{\zeta + z}{\zeta - z} \log|b(\zeta)| \, d\m(\zeta) \Big).\]
Here $d\m$ is the Lebesgue measure on $\T$, normalized by the condition $m(\T) = 1$. Existence of the angular derivative has been characterized in terms of the quantities appearing in this factorization. We proceed to present these results in the more general context of higher order Carathéodory-Julia theory, which arises in the investigation of conditions for existence of boundary values for higher order derivatives of $b$ and functions in $\hb$.

Let $\partial_z$ and $\conj{\partial_z}$ denote the usual complex differentiation operators with respect to the complex variable $z$. For every $z \in \D$, the limit 
\[ \conj{\partial_z} k^b_z = \lim_{h \to 0} \frac{k^b_{z+h} - k^b_z}{\conj{h}}\] exists in the norm of $\hb$. Similarly, the higher order kernels $\conj{\partial_z}^m k^b_z$ are also members of $\hb$, and it is not hard to see that \begin{equation}
    \label{E:DerivativReproducingFormula} 
    \ip{f}{\conj{\partial_z}^mk^b_z}_b = f^{(m)}(z). 
\end{equation} According to \eqref{E:HbNormEq}, we have the useful formulas
\begin{equation}
    \label{E:HigherOrderKernelNorm}
    \|\conj{\partial_z}^m k^b_z\|_b^2 = \ip{\conj{\partial_z}^m k_z^b}{\conj{\partial_z}^m k_z^b}_b = \partial_z^m \conj{\partial_z}^m \frac{1-|b(z)|^2}{1-|z|^2}.
\end{equation} 
If the quantity in \eqref{E:HigherOrderKernelNorm} is uniformly bounded over a sequence of points $(z_n)_n$ tending to $1$, then we may as before define the \textit{boundary derivative kernel} $\conj{\partial_z}^m k^b_1$ as a weak (and pointwise in $\D$) limit of a subsequence $(\conj{\partial_z^m} k^b_{z_n})_n$. Throughout the article, $\conj{\partial_z}^m k^b_1$ is to be interpreted according to that weak limit definition. We will not need any more precise pointwise formula for this function (but see \cite{fricain2008integral}). 

The following result of Fricain and Mashreghi has been obtained in \cite{fricain2008boundary} and \cite{fricain2008integral} and characterizes the existence of radial limits for derivatives of functions in $\hb$ in terms of the quantities appearing in \eqref{E:bInnerOuterFact}.

\begin{thm}\textbf{(Fricain-Mashreghi, higher order radial characterization)} \thlabel{T:FricainMashreghiTheorem}
For every non-negative integer $m$, the following conditions are equivalent.
\begin{enumerate}[(i)]
    \item As $r \to 1$, we have the norm convergence $\conj{\partial_z}^m k^b_{r} \to \conj{\partial_z}^m k^b_1$ in $\hb$. 
    \item For every $f \in \hb$, the radial limit
    \[ f^{(m)}(1) := \lim_{r \to 1} f^{(m)}(r)\] exists.
    \item We have \[ \sum_n \frac{1-|a_n|^2}{|1-a_n|^{2m+2}} + \int_\T \frac{d\nu(\zeta)}{|1 - \zeta|^{2m+2}} + \int_\T \frac{-\log|b(\zeta)|}{|1 -\zeta|^{2m+2}} \, d\m(\zeta) < \infty.\]
\end{enumerate}
\end{thm}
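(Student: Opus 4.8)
The plan is to prove the cycle of implications $(i) \Rightarrow (ii) \Rightarrow (iii) \Rightarrow (i)$, where $(iii)$ is the condition $\sup_r \|\conj{\partial_z}^m k^b_r\|_b^2 < \infty$; this is the natural intermediary since the quantity in \eqref{E:HigherOrderKernelNorm} is exactly what links the weak-limit construction of $\conj{\partial_z}^m k^b_1$ to the arithmetic of the Nevanlinna factors. The implication $(i) \Rightarrow (ii)$ is essentially immediate from the derivative reproducing formula \eqref{E:DerivativReproducingFormula}: if $\conj{\partial_z}^m k^b_r \to \conj{\partial_z}^m k^b_1$ in norm, then for $f \in \hb$ we get $f^{(m)}(r) = \ip{f}{\conj{\partial_z}^m k^b_r}_b \to \ip{f}{\conj{\partial_z}^m k^b_1}_b$, so the radial limit exists. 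For $(ii) \Rightarrow (iii)$ I would invoke the uniform boundedness principle: the maps $f \mapsto f^{(m)}(r)$ are bounded functionals on $\hb$ with norms $\|\conj{\partial_z}^m k^b_r\|_b$, and if they converge pointwise on all of $\hb$ as $r \to 1$ then by Banach--Steinhaus their norms stay bounded, which is precisely $(iii)$.

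The substantive step is the equivalence of $(iii)$ with the arithmetic condition in the third bullet, and then the upgrade of $(iii)$ back to the norm convergence in $(i)$. For the arithmetic equivalence, the strategy is to compute $\partial_z^m \conj{\partial_z}^m \frac{1-|b(z)|^2}{1-|z|^2}$ along the radius $z = r$ and extract its asymptotic behavior as $r \to 1$ in terms of the factors of $b$. Writing $1 - |b(z)|^2$ in terms of $\log|b(z)|$ and using the Nevanlinna representation, the quantity $\frac{1-|b(z)|^2}{1-|z|^2}$ is controlled by $\frac{-\log|b(z)|}{1-|z|^2}$ up to bounded multiplicative factors near the radius (since $|b|$ is bounded away from anything problematic here), and the latter, via the Herglotz/Poisson representation of $-\log|b|$, is a positive harmonic-type expression whose mass is governed by the three measures: the zero counting via $\sum_n (1-|a_n|^2)\,P(a_n, \cdot)$, the singular part $\nu$, and the boundary part $-\log|b(\zeta)|\,d\m$. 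Differentiating $m$ times in $z$ and $\conj{z}$ and evaluating at $r$ produces, for each atom or mass element at distance $\asymp |1-\zeta|$ from $1$, a contribution of size $\asymp |1-\zeta|^{-(2m+2)}$ as $r \to 1$; summing/integrating gives that $\sup_r \|\conj{\partial_z}^m k^b_r\|_b^2 < \infty$ if and only if the displayed sum-plus-two-integrals is finite. This is the place where one must be careful with the interchange of differentiation and integration and with two-sided estimates on the Poisson-type kernels and their derivatives along the radius; I expect the bookkeeping of the $m$-fold $\partial_z^m\conj{\partial_z}^m$ derivative of a Poisson kernel, uniformly in the location of the mass, to be the main obstacle, and I would isolate it as a lemma giving $\partial_z^m\conj{\partial_z}^m P(w,r) \asymp |1-\overline{w}r|^{-(2m+2)} \asymp (1-r + |1-w|)^{-(2m+2)}$ with constants depending only on $m$.

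Finally, to close the loop with $(iii) \Rightarrow (i)$, the point is that weak convergence plus convergence of norms gives norm convergence in Hilbert space. Under $(iii)$ the family $\{\conj{\partial_z}^m k^b_r\}_r$ is bounded, so every sequence $r_n \to 1$ has a weakly convergent subsequence; the weak limit is forced to be the function $\conj{\partial_z}^m k^b_1$ because, testing against the dense set of kernels $k^b_w$ and their $\conj{\partial}$-derivatives, the pointwise-in-$\D$ limit of $\conj{\partial_z}^m k^b_r$ is uniquely determined (it is $\conj{\partial_z}^m$ of $\lim_r k^b_r$, computed off the boundary). Hence $\conj{\partial_z}^m k^b_r \to \conj{\partial_z}^m k^b_1$ weakly along the full radial approach. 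It then remains to show $\|\conj{\partial_z}^m k^b_r\|_b \to \|\conj{\partial_z}^m k^b_1\|_b$; I would get this by showing the scalar function $r \mapsto \partial_z^m\conj{\partial_z}^m \frac{1-|b(z)|^2}{1-|z|^2}\big|_{z=r}$ is monotone (or of bounded variation up to a controlled error) near $r = 1$, so its boundedness forces a limit, and that limit equals $\|\conj{\partial_z}^m k^b_1\|_b^2$ by lower semicontinuity of the norm under weak convergence together with the reverse inequality from the explicit asymptotics. Combining, weak convergence and norm convergence yield $(i)$, completing the cycle.
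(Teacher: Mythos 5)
Your high-level skeleton is sound and matches the architecture the paper uses for its Theorem~\ref{T:maintheorem1} (which subsumes this statement): uniform boundedness gives $(ii)\Rightarrow\sup_r\|\conj{\partial_z}^m k^b_r\|_b<\infty$, and weak convergence plus convergence of norms upgrades to norm convergence. The genuine gap is in the substantive step, where you pass from $\frac{1-|b(z)|^2}{1-|z|^2}$ to $\frac{-\log|b(z)|^2}{1-|z|^2}$ ``up to bounded multiplicative factors'' and then apply $\partial_z^m\conj{\partial_z}^m$. Pointwise comparability of two functions says nothing about comparability of their iterated derivatives; the entire difficulty of the theorem sits in the Leibniz cross-terms that appear when one differentiates the \emph{exact} relation between these two quantities. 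The paper handles this (for the zero-free factor) via the identity $\int_\T|1-\conj{\zeta}z|^{-2}\,d\mu(\zeta)=S(|b(z)|^2)\,\|k^b_z\|_b^2$ with $S(x)=\sum_{n\ge1}(1-x)^{n-1}/(2n)$, applies Leibniz, and then needs the nontrivial a priori bounds of \thref{L:bZeroFreeSupBoundLemma} (and \thref{L:bjSupBoundLemma} for the Blaschke factor, where derivatives of \emph{all partial products} $b_j$ up to order $2m+1$ must be bounded uniformly) to show the off-diagonal Leibniz terms are dominated by the diagonal one. These derivative bounds are themselves proved by a bootstrap induction using the logarithmic derivative of $b$ together with the hypothesis; your proposal contains no substitute for them, and an induction on $m$ is also needed to absorb the lower-order cross-terms.

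A second, concrete error: the proposed key lemma $\partial_z^m\conj{\partial_z}^m P(w,r)\asymp|1-\conj{w}r|^{-(2m+2)}$ is false as stated for boundary masses, since the Poisson kernel is harmonic in $z$ and hence $\partial_z\conj{\partial_z}P\equiv0$. The correct and useful identity is $\partial_z^m\conj{\partial_z}^m\,|1-\conj{\zeta}z|^{-2}=(m!)^2\,|1-\conj{\zeta}z|^{-(2m+2)}$ for $\zeta\in\T$, i.e., the two-sided estimate holds for $P(\zeta,z)/(1-|z|^2)$, not for $P$ itself; this is what makes the left-hand side of the paper's \eqref{E:muSKbExpressionLEibniz} come out right. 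Finally, your closing suggestion to deduce convergence of $r\mapsto\|\conj{\partial_z}^m k^b_r\|_b^2$ from monotonicity or bounded variation is unsupported; in the radial case the convergence of each Leibniz term follows instead from dominated convergence using $|1-\conj{w}r|\ge c\,|1-w|$ together with the coordinatewise convergence of the coefficient sequences, which is how the paper argues (compare \thref{L:ProofTheorem3BlaschkeNormConvLemma}).
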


Earlier results in this direction which pertain to inner $b$ are due to Ahern and Clark and can be found in \cite{ahern1970radial}. It may be worth mentioning that, with some work, one can replace the radial convergence in $(i)$ and $(ii)$ by non-tangential convergence. There exist results in the spirit of \thref{T:FricainMashreghiTheorem} with part $(iii)$ replaced by similar conditions on the Aleksandrov-Clark measure corresponding to $b$ (see the discussion in \cite[Chapter VII]{sarasonbook}). We note also the automatic strengthening from weak convergence $\conj{\partial_z}^m k^b_{r} \to \conj{\partial_z}^m k^b_1$ implicit in $(ii)$ to the norm convergence in $(i)$. 




We wish to mention also the work of Bolotnikov-Kheifets, in which a more direct higher order generalization of the Carathéodory-Julia Theorem has been obtained. In their result \cite[Theorem 1.2]{bolotnikov2006higher}, the existence of non-tangential limit for a higher order derivative of $b$ and positivity of a certain matrix is shown to be equivalent to the boundedness of the supremum of the expressions in \eqref{E:HigherOrderKernelNorm} over a non-tangential approach region. Recently, an interesting generalization of the Carathéodory-Julia Theorem to the setting of arbitrary reproducing kernels and arbitrary domains $X$ instead of $\D$ appeared in the work of Dahlin in \cite{dahlin2024boundary}.

\subsection{More general approach regions. Work of Duan-Li-Mashreghi.}

A different generalization of the Carathéodory-Julia theory arises when replacing the non-tangential approach region $\Gamma$ with a more general one. In our article, an \textit{approach region at $1$} will be an open simply connected domain $\Omega \subset \D$ which satisfies the boundary intersection property $\partial \Omega \cap \T = \{1\}$. Being interested in regions at least as large as the non-tangential region, we shall for convenience also assume that $\Omega$ is star-shaped with respect to $1$ in a neighbourhood of that point, by what we mean that for all $z \in \Omega$ sufficiently close to $1$, the line segment between $1$ and $z$ is contained in $\Omega$. This requirement will exclude some pathological examples which are outside of the scope of our investigation. 

An approach region can be associated to an increasing and continuous function $\rho$ that is defined for non-negative $x$, and satisfies $\rho(0) = 0$, $\rho(x) > 0$ for $x > 0$. Consider a domain of the form
\begin{equation}
    \label{E:OmegaRhoDef}
    \Omega_\rho = \{ z \in \D : 1-|z| > \rho( |\arg z| )\}.
\end{equation} Then $\rho(x) = cx$ corresponds to the non-tangential regions $\Gamma(c)$, while $\rho(x) = cx^2$ corresponds to the usual oricyclic approach regions, which in a neighbourhood of the point $1$ resemble a disk inside $\D$ which touches $\T$ at $1$.

It is natural to ask if analogs of the earlier stated theorems exist for more general approach regions. Specifically, one may try to obtain conditions on $b$ so that the boundary limit
\begin{equation}
    \label{E:BoundaryLimOmega}
    f^{(m)}(1) := \lim_{\substack{z \to 1 \\ z \in \Omega}} f^{(m)}(z)
\end{equation} exists for all $f \in \hb$. The limit must, of course, coincide with the ordinary non-tangential one, if both exist, so our above notation is consistent. This particular question was investigated in the work of Leung in \cite{leung1978boundary}, in which various results are obtained in the case that $b$ is an inner function and the approach regions are of the form \eqref{E:OmegaRhoDef} with $\rho(x) = cx^\gamma$ for some $\gamma \geq 1$. A related study was carried out by Protas in \cite{protas1972tangential}. The first-order results (i.e, $m = 0$) in Leung's work have been recently generalized by Duan-Li-Mashreghi in \cite{duan2025reproducing} from the regions $\Omega_\rho$ considered by Leung to general approach regions. It requires a little bit of preparation to state the main theorem in \cite{duan2025reproducing}. For $z \in \D$ close to $1$ and with positive imaginary part ($\arg z \in (0, \pi)$, say) consider the arc 
\begin{equation}
    \label{E:EzDef} E_z := \{ e^{it} \in \T : \arg z/2 < t < 2 \arg z \}.
\end{equation} For $z$ with negative imaginary part, let $E_z$ be the reflection in the real axis of the arc $E_{\conj{z}}$, and set $E_z$ to be empty for $z = r \in (0,1)$. Define also the circle sector 
\begin{equation}
    \label{E:SEzDef}
    S_z = \{ re^{it} \in \D : e^{it} \in E_z \}.
\end{equation}
The Duan-Li-Mashreghi result reads as follows (see \cite[Theorem 2.1]{duan2025reproducing} for a slightly extended formulation).

\begin{thm}{\textbf{(Duan-Li-Mashreghi)}} \thlabel{T:DLMTheorem} Let $b = BS_\nu$ be an inner function with zero set $(a_n)_n$, and let $\Omega$ be an approach region at $1$. Then the following conditions are equivalent.
\begin{enumerate}[(i)]
    \item \[\sup_{z \in \Omega} \| k^b_z\|_b < \infty.\]
    \item For every $f \in \hb$, $f(z)$ has a limit as $z \to 1$ in $\Omega$.
    \item We have 
    \begin{equation}
        \label{E:AhernClarkCond}
        \sum_n \frac{1-|a_n|^2}{|1-a_n|^{2}} + \int_\T \frac{d\nu(\zeta)}{|1 - \zeta|^{2}} < \infty
    \end{equation} 
    and
    \begin{equation}
        \label{E:FzCond} \sup_{z \in \Omega} \sum_{n : a_n \in S_z} \frac{1-|a_n|^2}{|1-\conj{a_n}z|^{2}} + \int_{E_z} \frac{d\nu(\zeta)}{|1 - \conj{\zeta}z|^{2}} < \infty.
    \end{equation} 
\end{enumerate}
\end{thm}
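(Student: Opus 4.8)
The plan is to prove the chain $(ii)\Rightarrow(i)$, then $(i)\Leftrightarrow(iii)$, and finally $(iii)\Rightarrow(ii)$, the last being the substantial one. The implication $(ii)\Rightarrow(i)$ is a soft uniform boundedness argument: if $(i)$ fails, choose $z_k\in\Omega$ with $\|k^b_{z_k}\|_b\to\infty$; since $z\mapsto\|k^b_z\|_b^2=\tfrac{1-|b(z)|^2}{1-|z|^2}$ is continuous on $\D$, hence locally bounded there, and since $\overline{\Omega}\cap\T=\{1\}$, the only possible accumulation point of $(z_k)$ is $1$, so $z_k\to1$. For each $f\in\hb$ the numbers $f(z_k)=\ip{f}{k^b_{z_k}}_b$ converge and hence form a bounded sequence, so Banach--Steinhaus forces $\sup_k\|k^b_{z_k}\|_b<\infty$, a contradiction.

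The equivalence $(i)\Leftrightarrow(iii)$ I would handle via the Nevanlinna factorization $b=BS_\nu$ together with one elementary geometric observation: there is an absolute constant (depending only on the numbers $1/2$ and $2$ in the definition of $E_z$) such that $|1-\overline{\zeta}z|=|\zeta-z|\gtrsim|1-\zeta|+|1-z|$ for $\zeta\in\T\setminus E_z$, and $|1-\overline{a_n}z|\gtrsim|1-a_n|+|1-z|$ for $a_n\notin S_z$, uniformly in $z\in\D$ near $1$ (for $z$ away from $1$ everything in sight is bounded, again using $\overline{\Omega}\cap\T=\{1\}$). For $(iii)\Rightarrow(i)$ I would combine the inequality $1-\prod_j x_j\le\sum_j(1-x_j)$ for $x_j\in[0,1]$ with the exact identity $\tfrac{1-|b_{a_n}(z)|^2}{1-|z|^2}=\tfrac{1-|a_n|^2}{|1-\overline{a_n}z|^2}$ for a single Blaschke factor $b_{a_n}(w)=\tfrac{|a_n|}{a_n}\tfrac{a_n-w}{1-\overline{a_n}w}$ and the bound $\tfrac{1-|S_\nu(z)|^2}{1-|z|^2}\le\tfrac{-2\log|S_\nu(z)|}{1-|z|^2}=2\int_\T\tfrac{d\nu(\zeta)}{|\zeta-z|^2}$; splitting the resulting sum and integral into the parts over $S_z$, $E_z$ (bounded by \eqref{E:FzCond}) and their complements (bounded by \eqref{E:AhernClarkCond} via the geometric observation) yields $\|k^b_z\|_b^2\lesssim$ a finite constant. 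For $(i)\Rightarrow(iii)$, from $\sup_\Omega\|k^b_z\|_b^2=:M<\infty$ one first gets $|b(z)|\to1$ and hence $-\log|b(z)|\le 2M(1-|z|^2)$ for $z$ near $1$ in $\Omega$; expanding $-\log|b(z)|$ as a sum of nonnegative Blaschke and Poisson terms and bounding each term from below, one reads off that $\sum_n\tfrac{1-|a_n|^2}{|1-\overline{a_n}z|^2}$ and $\int_\T\tfrac{d\nu}{|\zeta-z|^2}$ are uniformly bounded over such $z$; restricting to $S_z$, $E_z$ gives \eqref{E:FzCond}, and letting $z\to1$ along a sequence in $\Omega$ with Fatou's lemma gives \eqref{E:AhernClarkCond}.

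The heart of the matter is $(iii)\Rightarrow(ii)$. Given $(iii)$ we already have $M:=\sup_\Omega\|k^b_z\|_b<\infty$, and by \eqref{E:AhernClarkCond} and the Carathéodory--Julia theorem (\thref{T:CJTheorem}) the inner function $b$ has a Carathéodory limit $b(1)$ at $1$ with $|b(1)|=1$, so $k^b_1(w)=\tfrac{1-\overline{b(1)}b(w)}{1-w}\in\hb$. The key point is to upgrade this to the genuinely tangential statement $b(z)\to b(1)$ as $z\to1$ through $\Omega$. Telescoping the convergent product $b=\prod_n b_{a_n}\cdot S_\nu$ and using that all factors are bounded by $1$ on $\D$ and unimodular at $1$, one obtains $|b(z)-b(1)|\le\sum_n|b_{a_n}(z)-b_{a_n}(1)|+|S_\nu(z)-S_\nu(1)|$. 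The identity $b_{a_n}(z)-b_{a_n}(1)=\tfrac{|a_n|}{a_n}\cdot\tfrac{(1-|a_n|^2)(1-z)}{(1-\overline{a_n}z)(1-\overline{a_n})}$ gives
\[
\sum_n|b_{a_n}(z)-b_{a_n}(1)|=|1-z|\sum_n\frac{1-|a_n|^2}{|1-\overline{a_n}z|\,|1-a_n|},
\]
and by Cauchy--Schwarz the sum on the right is at most $\big(\sum_n\tfrac{1-|a_n|^2}{|1-\overline{a_n}z|^2}\big)^{1/2}\big(\sum_n\tfrac{1-|a_n|^2}{|1-a_n|^2}\big)^{1/2}$, which is bounded uniformly in $z\in\Omega$ by \eqref{E:FzCond}, \eqref{E:AhernClarkCond} and the geometric observation, exactly as in the previous step; hence this contribution is $O(|1-z|)\to0$. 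For the singular factor, the identity $\tfrac{\zeta+z}{\zeta-z}-\tfrac{\zeta+1}{\zeta-1}=\tfrac{2\zeta(z-1)}{(\zeta-z)(\zeta-1)}$ yields both $-\log|S_\nu(z)|=(1-|z|^2)\int_\T\tfrac{d\nu}{|\zeta-z|^2}\to0$ and $|\arg S_\nu(z)-\arg S_\nu(1)|\le2|1-z|\int_\T\tfrac{d\nu(\zeta)}{|\zeta-z|\,|\zeta-1|}$, and the same Cauchy--Schwarz-and-splitting estimate bounds the last integral uniformly in $z\in\Omega$, so $S_\nu(z)\to S_\nu(1)$. Thus $b(z)\to b(1)$ in $\Omega$, whence $k^b_z(w)\to k^b_1(w)$ pointwise on $\D$; since $\|k^b_z\|_b\le M$, standard reproducing-kernel arguments upgrade this to weak convergence $k^b_z\to k^b_1$ in $\hb$, and therefore $f(z)=\ip{f}{k^b_z}_b\to\ip{f}{k^b_1}_b$ for every $f\in\hb$, which is $(ii)$.

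The main obstacle is the uniform-in-$z$ control, over the whole (possibly tangential) region $\Omega$, of the quantities $\sum_n\tfrac{1-|a_n|^2}{|1-\overline{a_n}z|^2}$ and $\int_\T\tfrac{d\nu}{|\zeta-z|^2}$ starting from the merely summable data in \eqref{E:AhernClarkCond}: this is precisely where the exact shape of $E_z$, $S_z$ and the comparison $|\zeta-z|\gtrsim|1-\zeta|+|1-z|$ off $E_z$ are essential, and where care is genuinely required, since on $E_z$ the quantity $|\zeta-z|$ can be far smaller than $|1-\zeta|$. A second delicate point, specific to general approach regions, is that $(iii)$ must be shown to force the tangential convergence $b(z)\to b(1)$ and not merely $|b(z)|\to1$; here it is the extra factor $|1-z|$ produced by the telescoping identities that makes the argument work, because the remaining sums and integrals — by the first point — are only known to be uniformly bounded and in general do not tend to $0$.
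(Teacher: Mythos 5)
Your proposal is correct, and in outline it follows the same strategy as the paper's proof of its generalization (\thref{T:maintheorem1} with $m=0$ and inner $b$): the equivalence of the ``localized'' condition $(iii)$ with the global condition $(iii')$ via the geometric estimate \eqref{E:EzEstimate} off $E_z\cup S_z$, the reduction of $\|k^b_z\|_b^2$ to the quantities $\sum_n\frac{1-|a_n|^2}{|1-\overline{a_n}z|^2}+\int_\T\frac{d\nu}{|\zeta-z|^2}$ once $|b(z)|$ is known to be close to $1$, and the passage from pointwise convergence of kernels plus uniform norm bounds to weak convergence via density of kernel combinations. Where you differ is in the technical devices. For $(i)\Leftrightarrow(iii)$ you use the subadditivity $1-\prod_j x_j\le\sum_j(1-x_j)$ and the two-sided comparison of $-\log x$ with $1-x$, in place of the paper's exact decomposition \eqref{E:BlaschkeNormFormula} and the $S(x)$-series device; for $m=0$ these are interchangeable (the paper's exact identities are what make the higher-order induction possible). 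For the convergence of $b$ along $\Omega$ you telescope the product and use the explicit difference identities for Blaschke factors and for $\log S_\nu$ together with Cauchy--Schwarz, whereas the paper bounds $b'$ on $\Omega$ via the logarithmic derivative \eqref{E:BlaschkeLogarithmicDerivative} and the estimate $|z-a_n|\ge\frac12|1-\overline{a_n}z|$, concluding that $b$ is Lipschitz on $\Omega$ near $1$; both routes yield $|b(z)-b(1)|=O(|1-z|)$ uniformly on $\Omega$, which is exactly the extra leverage needed beyond $|b(z)|\to1$ in a tangential region.

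One small point deserves a line of justification: your telescoping inequality presupposes that the infinite product $\prod_n b_{a_n}(1)$ of unimodular numbers converges (so that $b(1)$ may be written as $\prod_n b_{a_n}(1)\,S_\nu(1)$ and the limit of the partial telescoped sums is legitimate). This is true here because
\[
|1-b_{a_n}(1)|=\frac{(1-|a_n|)\,|a_n+|a_n||}{|a_n|\,|1-a_n|}\le\frac{2(1-|a_n|)}{|1-a_n|}\le\frac{1-|a_n|^2}{|1-a_n|^2}+(1-|a_n|),
\]
which is summable by \eqref{E:AhernClarkCond} and the Blaschke condition; alternatively you can sidestep the issue entirely by running the same telescoping estimate for $|b(z)-b(z')|$ with $z,z'\in\Omega$, which shows directly that $b$ is uniformly Lipschitz on $\Omega$ near $1$ and hence has a limit there. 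With that remark supplied, the argument is complete.
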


Condition $(iii)$ above is stated for our convenience in a  slightly different form than the original condition in the statement of \cite[Theorem 2.1]{duan2025reproducing} (see the proof given in the reference for details). Condition \eqref{E:AhernClarkCond} appears already in the context of non-tangential approach regions in \cite{ahern1970radial}, and condition \eqref{E:FzCond} is what has to be added to compensate in the case of an approach region larger than the non-tangential one. The condition $(iii)$ above is easily seen to be equivalent to
\begin{enumerate}
    \item[\textit{(iii')}]  \[ \sup_{z \in \Omega} \sum_{n} \frac{1-|a_n|^2}{|1-\conj{a_n}z|^{2}} + \int_{\T} \frac{d\nu(\zeta)}{|1 - \conj{\zeta}z|^{2}} < \infty.\]
\end{enumerate} 
Indeed, this condition is clearly stronger than \eqref{E:FzCond}, and it implies \eqref{E:AhernClarkCond} by Fatou's lemma. In the other direction, we note that the sets $E_z$ and $S_z$ satisfy the following readily established estimate (see \cite[Lemma 3.4]{duan2025reproducing}) from which the mentioned equivalence readily follows: given two points $w \in \cD := \T \cup \D$ and $z \in \D$, if $w \not\in S_z \cup E_z$, then
\begin{equation}
    \label{E:EzEstimate}
    \frac{1}{|1-\conj{w}z|} \leq \frac{4}{|1-w|}.
\end{equation}
Upside of $(iii)$ over $(iii')$ is that \eqref{E:FzCond} isolates the information needed to conclude the norm convergence of $k^b_z \to k^b_1$ as $z \to 1$ in $\Omega$, which is not automatic in the context of more general approach regions considered here. Indeed, according to \cite[Theorem 2.2]{duan2025reproducing}, under the equivalent conditions of the above theorem, this norm convergence occurs if and only if the quantity in \eqref{E:FzCond} tends to $0$ as $z \to 1$:
\begin{equation}
    \label{E:FzCondLim0} \lim_{\substack{z \to 1 \\ z \in \Omega}} \sum_{n : a_n \in S_z} \frac{1-|a_n|^2}{|1-\conj{z}a_n|^{2}} + \int_{E_z} \frac{d\nu(\zeta)}{|1 - \conj{z}\zeta|^{2}} = 0.
\end{equation}

In the non-tangential case $\Omega = \Gamma$, it can be proved that \eqref{E:AhernClarkCond} implies \eqref{E:FzCondLim0} (and a fortiori \eqref{E:FzCond}). It follows that norm and weak convergence $k^b_z \to k^b_1$ as $z \to 1$ in $\Gamma$ are equivalent.

\section{Main results}

\subsection{Boundary limits for general approach region and symbol.}

The first of our main results is an extension of \thref{T:DLMTheorem} by Duan-Li-Mashreghi to symbols with a non-trivial outer part, and to higher order.

\begin{mainthm} \thlabel{T:maintheorem1}
    Let $b: \D \to \D$ be an analytic function with zero set $(a_n)_n$ and singular inner factor $S_\nu$. Let $\Omega$ be an approach region at $1$. For any integer $m \geq 0$, the following conditions are equivalent.
    \begin{enumerate}[(i)]
        \item \[\sup_{z \in \Omega} \|\conj{\partial_z}^m k^b_z\|_b < \infty.\]
        \item For every $f \in \hb$, $f^{(m)}(z)$ has a limit as $z \to 1$ in $\Omega$.
        \item \[ \sup_{z \in \Omega} \sum_n \frac{1-|a_n|^2}{|1-\conj{a_n}z|^{2m+2}} + \int_\T \frac{d\nu(\zeta)}{|1 - \conj{\zeta}z|^{2m+2}} + \int_\T \frac{-\log|b(\zeta)|}{|1 - \conj{\zeta}z|^{2m+2}} d\m(\zeta) < \infty.\]
    \end{enumerate}
\end{mainthm}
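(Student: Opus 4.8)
The plan is to derive the three-way equivalence from a single analytic estimate together with soft functional-analytic arguments. Write
\[
\Psi_m(z) := \sum_n \frac{1-|a_n|^2}{|1-\conj{a_n}z|^{2m+2}} + \int_\T \frac{d\nu(\zeta)}{|1-\conj{\zeta}z|^{2m+2}} + \int_\T \frac{-\log|b(\zeta)|}{|1-\conj{\zeta}z|^{2m+2}} \, d\m(\zeta),
\]
so that $(iii)$ reads $\sup_{z\in\Omega}\Psi_m(z)<\infty$. The heart of the matter is the comparison
\[
\|\conj{\partial_z}^m k^b_z\|_b^2 \asymp \Psi_m(z),
\]
valid — with constants depending only on $m$ and on a constant $M$ — for every $z\in\D$ close to $1$ at which $\|k^b_z\|_b^2\leq M$ (equivalently $1-|b(z)|^2\leq M(1-|z|^2)$, i.e. $z$ lies in a Julia horodisk). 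To obtain it I would start from \eqref{E:HigherOrderKernelNorm}, expand $\partial_z^m\conj{\partial_z}^m\frac{1-|b(z)|^2}{1-|z|^2}$ by the Leibniz rule into a finite combination of terms $b^{(i)}(z)\conj{b^{(j)}(z)}$ times rational functions of $z,\conj z$ with poles only on $\T$, substitute the Nevanlinna factorization \eqref{E:bInnerOuterFact}, and then exploit the elementary inequalities $1-t\leq -\log t$ for $t\in(0,1]$ and $1-\prod_n t_n\leq\sum_n(1-t_n)$ (and, for the lower bound, $\prod_n t_n\leq\exp(-\sum_n(1-t_n))$, so $1-\prod_n t_n\geq c\min(\sum_n(1-t_n),1)$), the exact identity $|\zeta-z|=|1-\conj{\zeta}z|$ for $\zeta\in\T$, the Blaschke identity $1-|b_n(z)|^2=\frac{(1-|a_n|^2)(1-|z|^2)}{|1-\conj{a_n}z|^2}$, and the Poisson representations of $-\log|S_\nu|$ and $-\log|b_o|$, to match the surviving expressions with the three sums and integrals of $\Psi_m(z)$. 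This is morally the computation Fricain and Mashreghi carry out for $z=r$ real in proving \thref{T:FricainMashreghiTheorem}; the observation is that the factorization and the inequalities used do not see radiality, so the estimate persists for complex $z$ near $1$, and moreover one direction of it holds unconditionally. I expect this estimate to be the main obstacle: the ``leading order'' terms of the Leibniz expansion cancel, and bookkeeping the surviving combination — together with the outer factor, which is absent in \thref{T:DLMTheorem} — is the delicate part.

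I would organize the equivalence as $(i)\Leftrightarrow(iii)$ and $(ii)\Leftrightarrow(i)$. For $(i)\Leftrightarrow(iii)$, note first that since $z\mapsto\|\conj{\partial_z}^m k^b_z\|_b$ and $z\mapsto\Psi_m(z)$ are continuous and finite on $\D$ while $\conj{\Omega}\cap\{|z-1|\geq\varepsilon\}$ is a compact subset of $\D$ for each $\varepsilon>0$, only the behaviour of these quantities as $z\to1$ in $\Omega$ is at issue. The comparison is then self-improving. If $(iii)$ holds, $\Psi_m$ is bounded on $\Omega$, hence so is $\Psi_0$ (because $|1-\conj{a_n}z|,|1-\conj{\zeta}z|\leq2$ gives $\Psi_0(z)\leq 2^{2m}\Psi_m(z)$), and then the \emph{unconditional} bound $\|k^b_z\|_b^2=\tfrac{1-|b(z)|^2}{1-|z|^2}\leq 2\Psi_0(z)$ (obtained from $1-abc\leq(1-a)+(1-b)+(1-c)$ and the identities above) places the relevant part of $\Omega$ in a Julia horodisk, so the comparison applies and $\sup_\Omega\|\conj{\partial_z}^m k^b_z\|_b^2\lesssim\sup_\Omega\Psi_m(z)<\infty$, which is $(i)$. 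Conversely, under $(i)$ an unconditional lower bound of the shape $\|\conj{\partial_z}^m k^b_z\|_b^2\gtrsim\min\bigl(\Psi_m(z),(1-|z|^2)^{-(2m+1)}\bigr)$ (coming from the same expansion) forces $\Psi_m(z)$ to stay bounded on the part of $\Omega$ near $1$, which is $(iii)$. The decomposition of $(iii)$ into a radial Ahern--Clark condition plus a tangential correction over the arcs $E_z$ and sectors $S_z$, via \eqref{E:EzEstimate}, proceeds exactly as in the discussion of $(iii)\Leftrightarrow(iii')$ in the excerpt, and exhibits \thref{T:FricainMashreghiTheorem} and \thref{T:DLMTheorem} as special cases.

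The implication $(ii)\Rightarrow(i)$ is pure functional analysis: if $f^{(m)}(z)=\ip{f}{\conj{\partial_z}^m k^b_z}_b$ has a limit as $z\to1$ in $\Omega$ for every $f\in\hb$, then along any sequence $z_n\to1$ in $\Omega$ the scalars $\ip{f}{\conj{\partial_z}^m k^b_{z_n}}_b$ are bounded for each $f$, so the uniform boundedness principle gives $\sup_n\|\conj{\partial_z}^m k^b_{z_n}\|_b<\infty$; combined with the compactness remark above this yields $\sup_{z\in\Omega}\|\conj{\partial_z}^m k^b_z\|_b<\infty$. For $(i)\Rightarrow(ii)$, one first notes that $(i)$ is equivalent to $(iii)$, so in particular the radial Ahern--Clark condition holds and, by \thref{T:FricainMashreghiTheorem} (or directly from the radial instance of the comparison), $b$ possesses angular derivatives at $1$ up to order $m$; a Julia-type inequality of order $m$ (cf.\ Bolotnikov--Kheifets \cite{bolotnikov2006higher}) together with the bound in $(i)$ then shows that $b^{(j)}(z)$ has a limit as $z\to1$ in $\Omega$ for each $j\leq m$ — for $j=0$ this is Julia's lemma, $|b(1)-b(z)|^2\leq|b'(1)|\tfrac{|1-z|^2}{1-|z|^2}\bigl(1-|b(z)|^2\bigr)\leq|b'(1)|\,M\,|1-z|^2\to0$. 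Consequently, for each fixed $w\in\D$, $\ip{k^b_w}{\conj{\partial_z}^m k^b_z}_b=\partial_z^m k^b_w(z)$ has a limit as $z\to1$ in $\Omega$; since $\operatorname{span}\{k^b_w:w\in\D\}$ is dense in $\hb$ and the net $(\conj{\partial_z}^m k^b_z)_{z\in\Omega}$ is bounded by $(i)$, it converges weakly in $\hb$ to the function we denote $\conj{\partial_z}^m k^b_1$, whence $f^{(m)}(z)=\ip{f}{\conj{\partial_z}^m k^b_z}_b\to\ip{f}{\conj{\partial_z}^m k^b_1}_b$ for every $f\in\hb$. This closes the circle. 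The secondary obstacle, after the analytic comparison, is precisely the order-$m$ Julia inequality needed to push the boundary convergence of $b^{(j)}$ through the possibly tangential region $\Omega$.
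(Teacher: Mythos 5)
Your soft arguments are sound and coincide with the paper's: $(ii)\Rightarrow(i)$ by uniform boundedness along sequences, and $(i)\Rightarrow(ii)$ by weak convergence of the bounded net $\conj{\partial_z}^m k^b_z$ tested against the dense span of kernels $k^b_w$. But essentially all of the analytic content of the theorem sits inside your asserted two-sided comparison $\|\conj{\partial_z}^m k^b_z\|_b^2\asymp\Psi_m(z)$ on a Julia horodisk, which you do not prove and yourself flag as ``the main obstacle''. That is a genuine gap, and the statement is moreover stronger than what the paper establishes or than what appears to be true: the paper proves no pointwise comparison, only that finiteness of one supremum over $\Omega$ forces finiteness of the other, by induction on $m$. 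The reason a pointwise bound is problematic is visible in the Leibniz expansion \eqref{E:BlaschkeDerivativeKernelNormFormula}--\eqref{E:CklDef}: the cross terms $C_{k,l}$ with $(k,l)\neq(m,m)$ carry the derivatives $b_{j-1}^{(m-k)}(z)$ of \emph{all} the partial Blaschke products, and their uniform boundedness in $j$ and $z\in\Omega$ is exactly \thref{L:bjSupBoundLemma}, whose proof already needs $(i)$ or $(iii)$ as a standing hypothesis (via the reproducing-kernel argument \eqref{E:partialMkernelEqRest}, or via the logarithmic derivative \eqref{E:BlaschkeLogarithmicDerivative} together with the separation estimate \eqref{E:ZerosFarAwayBound}). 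Only after that, and after the induction hypothesis has supplied $\sup_{\Omega}\Psi_{m-1}<\infty$, do Cauchy--Schwarz and \eqref{E:CNmkEstimate}--\eqref{E:CNmmLowerEst} show that $C_{m,m}$ dominates. Your proposed unconditional lower bound $\|\conj{\partial_z}^m k^b_z\|_b^2\gtrsim\min\bigl(\Psi_m(z),(1-|z|^2)^{-(2m+1)}\bigr)$ is not substantiated, and I do not see how to extract it from the expansion without this bootstrapping.

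Two further concrete issues. For the outer factor, the inequalities $1-t\le-\log t$ and their converses only give a comparison of $\Psi_0$ with $\|k^b_z\|_b^2$; at order $m$ the paper instead uses the exact identity \eqref{E:muSKbExpression}, namely $\int_\T|1-\conj{\zeta}z|^{-2}\,d\mu(\zeta)=S(|b(z)|^2)\,\|k^b_z\|_b^2$ with $S(x)=\sum_{n\ge1}(1-x)^{n-1}/(2n)$, and applies $\partial_z^m\conj{\partial_z}^m$ and Leibniz to \emph{that}; some such device is needed to make your sketch work for $b_o$, precisely because no Ahern--Clark-type norm formula is available there. Second, in $(i)\Rightarrow(ii)$ you appeal to an order-$m$ Julia inequality of Bolotnikov--Kheifets to get convergence of $b^{(j)}(z)$ as $z\to1$ in $\Omega$; their results are for non-tangential approach, and pushing them through a tangential region is exactly the delicate point (\thref{T:maintheorem3} shows that $b^{(2m+1)}$ may fail to converge in $\Omega_\rho$ even when the equivalent conditions hold for $m$). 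The paper avoids this by deducing directly from $(iii)$ that $b^{(k)}$ is bounded in $\Omega$ for $k\le 2m+1$, hence Lipschitz and convergent for $k\le 2m$, which covers all $j\le m$. In short, the two steps you defer are not technical afterthoughts; they are the theorem.
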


As before, the condition in $(iii)$ can be rewritten as conditions analogous to \eqref{E:AhernClarkCond} and \eqref{E:FzCond}, namely     \[
        \sum_n \frac{1-|a_n|^2}{|1-a_n|^{2m+2}} + \int_\T \frac{d\nu(\zeta)}{|1 - \zeta|^{2m+2}} + \int_\T \frac{-\log |b(\zeta)|}{|1 - \zeta|^{2m+2}} d\m(\zeta) < \infty
\] and
\[ \sup_{z \in \Omega} \sum_{n : a_n \in S_z} \frac{1-|a_n|^2}{|1-\conj{a_n}z|^{2m+2}} + \int_{ E_z} \frac{d\nu(\zeta)}{|1 - \conj{\zeta}z|^{2m+2}} + \int_{E_z} \frac{-\log|b(\zeta)|}{|1 - \conj{\zeta}z|^{2m+2}} d\m(\zeta) < \infty. \]

Our proof of the theorem does not use the technique of Duan-Li-Mashreghi of expressing the kernel norms according to certain formulas of Ahern and Clark from \cite{ahern1970functions} (see Section~3 in \cite{duan2025reproducing}), and can therefore be seen as more elementary. In fact, such a formula for the kernel norm may not exist for the outer function $b_o$. Our approach to overcome this difficulty is described at the beginning of Section~\ref{S:ProofMT1Sec}.

\subsection{Norm convergence of kernels.}

We extend also the above mentioned kernel norm convergence result of Duan-Li-Mashreghi, characterized in the case $m=0$ as equivalent to \eqref{E:FzCondLim0}.

\begin{mainthm} \thlabel{T:maintheorem2}
    Assume that the equivalent conditions in \thref{T:maintheorem1} hold for some $m$. Then the following two conditions are equivalent.
    \begin{enumerate}[(i)]
        \item We have the norm convergence $\conj{\partial_z}^m k^b_z \to \conj{\partial_z}^m k^b_1$ in $\hb$ as $z \to 1$ in $\Omega$.
        \item \[ \lim_{\substack{z \to 1 \\ z \in \Omega}} \sum_{n: a_n \in S_z} \frac{1-|a_n|^2}{|1-\conj{a_n}z|^{2m+2}} + \int_{E_z} \frac{d\nu(\zeta)}{|1 - \conj{\zeta}z|^{2m+2}} + \int_{E_z} \frac{-\log|b(\zeta)|}{|1 - \conj{\zeta}z|^{2m+2}} d\m(\zeta) = 0.\]
    \end{enumerate}
\end{mainthm}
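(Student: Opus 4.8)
The plan is to reduce Theorem B to a quantitative decomposition of the kernel norm $\|\conj{\partial_z}^m k^b_z\|_b$ into a ``near $1$'' contribution and a ``far from $1$'' contribution, and to show that the near contribution is precisely the quantity in condition $(ii)$, while the far contribution always converges to the corresponding value at the boundary kernel. Concretely, using \eqref{E:HigherOrderKernelNorm} we write $\|\conj{\partial_z}^m k^b_z\|_b^2 = \partial_z^m \conj{\partial_z}^m \frac{1-|b(z)|^2}{1-|z|^2}$, and then expand $1-|b(z)|^2$ via the Nevanlinna factorization \eqref{E:bInnerOuterFact}. The standard computation (the same one underlying \thref{T:FricainMashreghiTheorem} and used in Section~3 of \cite{duan2025reproducing}) expresses this derivative as a sum over the zeros $(a_n)_n$ of terms comparable to $\frac{1-|a_n|^2}{|1-\conj{a_n}z|^{2m+2}}$, plus integrals against $d\nu$ and against $-\log|b(\zeta)|\,d\m(\zeta)$ of kernels comparable to $|1-\conj{\zeta}z|^{-(2m+2)}$, up to lower-order and cross terms that are controlled once condition $(iii)$ of \thref{T:maintheorem1} is assumed. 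I would state this as a lemma: under the standing hypothesis, $\|\conj{\partial_z}^m k^b_z\|_b^2 = N(z) + R(z)$, where $N(z)$ is the sum and integrals restricted to $S_z$ and $E_z$ (the quantity in $(ii)$), and $R(z)$ is the remainder, restricted to the complement.

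Next I would show that $R(z) \to \|\conj{\partial_z}^m k^b_1\|_b^2$ as $z \to 1$ in $\Omega$. The point is that on the complement of $S_z \cup E_z$ the estimate \eqref{E:EzEstimate} holds, so by dominated convergence — using that condition $(iii)$ of \thref{T:maintheorem1} guarantees the dominating functions $\frac{1-|a_n|^2}{|1-a_n|^{2m+2}}$, $|1-\zeta|^{-(2m+2)}\,d\nu(\zeta)$, $-\log|b(\zeta)||1-\zeta|^{-(2m+2)}\,d\m(\zeta)$ are summable/integrable — and the pointwise convergence $\frac{1}{|1-\conj{a_n}z|} \to \frac{1}{|1-a_n|}$ etc. as $z \to 1$, the remainder converges to the corresponding expression with $z$ replaced by $1$. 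That limiting expression is exactly the formula for $\|\conj{\partial_z}^m k^b_1\|_b^2$ obtained by applying \eqref{E:HigherOrderKernelNorm} at the boundary point $1$ (equivalently, it is $\lim_n \|\conj{\partial_z}^m k^b_{z_n}\|_b^2$ along any sequence realizing the weak limit, using lower semicontinuity plus the bound). One has to check that the sets $S_z, E_z$ shrink to $\{1\}$ and, for any fixed zero $a_n$ or any fixed point $\zeta \neq 1$ of the measures, $a_n \notin S_z$ and $\zeta \notin E_z$ once $z$ is close enough to $1$; this is immediate from the definitions \eqref{E:EzDef}, \eqref{E:SEzDef}.

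With the decomposition in hand, the equivalence is essentially formal. Since $\conj{\partial_z}^m k^b_z \to \conj{\partial_z}^m k^b_1$ weakly in $\hb$ as $z\to 1$ in $\Omega$ — which follows from condition $(iii)$ of \thref{T:maintheorem1} together with pointwise convergence of the kernels (as in the discussion preceding \thref{T:maintheorem1}, or Theorem~2.2 of \cite{duan2025reproducing} for $m=0$) — norm convergence is equivalent to convergence of norms, $\|\conj{\partial_z}^m k^b_z\|_b \to \|\conj{\partial_z}^m k^b_1\|_b$, by the standard Hilbert space fact that weak convergence plus norm convergence implies strong convergence. Combining with $\|\conj{\partial_z}^m k^b_z\|_b^2 = N(z) + R(z)$ and $R(z) \to \|\conj{\partial_z}^m k^b_1\|_b^2$, we get that norm convergence holds if and only if $N(z) \to 0$, which is condition $(ii)$.

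The main obstacle I anticipate is the bookkeeping in the decomposition lemma: carrying out $\partial_z^m \conj{\partial_z}^m$ of $\frac{1-|B S_\nu b_o(z)|^2}{1-|z|^2}$ and organizing the resulting terms so that the ``localized'' part is \emph{exactly} (not merely comparable to) the quantity in condition $(ii)$, while all cross terms and lower-order derivatives are absorbed into an error that is dominated on the complement of $S_z\cup E_z$ and still converges to the right boundary value. For $m = 0$ this is the content of \cite{duan2025reproducing}; for general $m$ and for the outer factor $b_o$ — where, as the authors remark, no clean Ahern–Clark-type closed formula for the kernel norm is available — one likely needs the more elementary, self-contained route promised in Section~\ref{S:ProofMT1Sec}, and I would build Theorem~B directly on top of whatever identity or two-sided estimate is established there in proving \thref{T:maintheorem1}, rather than re-deriving it.
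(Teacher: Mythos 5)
Your high-level architecture is the same as the paper's: use weak convergence (automatic under the hypotheses of Theorem~\ref{T:maintheorem1}) to reduce norm convergence of the kernels to convergence of their norms, and use dominated convergence off $S_z\cup E_z$ via \eqref{E:EzEstimate} to show that the ``far'' contribution always converges, so that everything hinges on the ``near'' contribution tending to zero. The second of these is exactly the paper's \thref{L:TheoremBLemma3}, and the first is close to \thref{L:TheoremBLemma1}. However, there are two genuine gaps in the way you propose to execute this.

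First, the central decomposition $\|\conj{\partial_z}^m k^b_z\|_b^2 = N(z)+R(z)$ with $N(z)$ \emph{exactly} equal to the quantity in $(ii)$ does not exist, and the fallback you offer --- ``build on whatever identity or two-sided estimate is established in proving Theorem~A'' --- is not sufficient. Even for a Blaschke product the Leibniz expansion \eqref{E:BlaschkeDerivativeKernelNormFormula} expresses $\partial_z^m\conj{\partial_z}^m\|k^b_z\|_b^2$ as $C_{m,m}(z)$ (whose summands carry the extra factors $|b_{j-1}(z)|^2|a_j|^{2m}(m!)^2$) plus a host of cross terms $C_{k,l}$ with $k+l<2m$. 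For Theorem~A it suffices that these cross terms are \emph{bounded}; for Theorem~B you must show they actually \emph{converge} as $z\to1$ in $\Omega$, otherwise you cannot conclude that convergence of the kernel norm is equivalent to convergence of the principal term. This is the content of the paper's \thref{L:ProofTheorem3BlaschkeNormConvLemma} ($\ell^1$-convergence of the sequences $A_{k,l,z}$ for $k+l<2m$), whose proof requires the nontrivial observation \eqref{E:LimSupAnZestimate} that $\sup_{n:a_n\in S_z}|1-\conj{a_n}z|\to0$, so that the localized sums with the \emph{lower} exponents $2+k+l<2m+2$ tend to zero. A parallel inductive argument through \eqref{E:muSKbExpressionLEibniz} is needed for the outer/singular part, where no Ahern--Clark formula is available. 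Second, your identification of the limit of the far part with $\|\conj{\partial_z}^m k^b_1\|_b^2$ is unsupported: \eqref{E:HigherOrderKernelNorm} is an interior identity with no boundary analogue stated, and lower semicontinuity of the norm under weak convergence gives only $\|\conj{\partial_z}^m k^b_1\|_b\le\liminf\|\conj{\partial_z}^m k^b_{z_n}\|_b$, not equality. The paper pins the limit down by comparing with the radial approach via \thref{T:FricainMashreghiTheorem}, using that $E_r=S_r=\varnothing$ for $r\in(0,1)$, so the radial limit of the norms is known to equal $\|\conj{\partial_z}^m k^b_1\|_b$. You would need to supply an argument of this kind to close the loop.
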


Again, in the non-tangential case $\Omega = \Gamma$, the condition in $(ii)$ of \thref{T:maintheorem2} is automatically satisfied if the equivalent conditions of \thref{T:maintheorem1} are satisfied. Thus we have another proof of the fact that weak and strong convergence $\conj{\partial_z}^m k^b_z \to \conj{\partial_z}^m k^b_1$ as $z \to 1$ in $\Gamma$ are equivalent.

The following reformulation of the result may shed more light on the matter. Introduce the non-negative finite Borel measure $M$ on $\cD$ given by
\[ dM = \sum_n (1-|a_n|^2) d\delta_{a_n} + d\nu - \log |b| d\m,\] where $\delta_{a_n}$ is a unit mass at the point $a_n \in \D$. Then the condition $(iii)$ in \thref{T:maintheorem1} simply means that the functions
\begin{equation}
    \label{E:CauchyKernelPowerEq}
    w \mapsto \frac{1}{|1-\conj{w}z|^{2m+2}}, \quad z \in \Omega
\end{equation} form a bounded subset of the Lebesgue space $L^1(M)$. Although we shall not go into the details of a proof, we wish to mention that one can show that the condition $(ii)$ of \thref{T:maintheorem2} is equivalent to the \textit{uniform integrability} of the family \eqref{E:CauchyKernelPowerEq} with respect to the measure $M$, which by definition means that to any $\epsilon > 0$ there corresponds a $\delta > 0$ such that if $E \subset \cD$ is a Borel set satisfying $M(E) < \delta$, then
\[ \sup_{z \in \Omega} \int_E \frac{1}{|1-\conj{w}z|^{2m+2}} dM(w) < \epsilon.\]
Due to the pointwise convergence \[\frac{1}{|1-\conj{w}z|^{2m+2}} \to \ \frac{1}{|1-\conj{w}|^{2m+2}}\] as $z \to 1$, by basic measure theory this condition is further equivalent to the compactness of the family of functions \eqref{E:CauchyKernelPowerEq} in $L^1(M)$. In this way we connect the properties of families of kernel functions $\conj{\partial_z}^m k^b_z$ in $\hb$ with the properties of families of functions \eqref{E:CauchyKernelPowerEq} in $L^1(M)$.




\subsection{Carathéodory-Julia Theorem for general approach regions: failure and a positive result.}

Given that Sarason's extension of Carathéodory-Julia Theorem has an analog in the context of more general approach regions, one may wonder if the classical theorem also generalizes in a similar way. 
Our main result in this direction shows that a natural restatement of the original theorem of Carathéodory-Julia is not valid in the broader context. To wit, consider the domains $\Omega_\rho$ in \eqref{E:OmegaRhoDef}. Intuitively speaking, a domain of this form will be strictly larger than a non-tangential region in the vicinity of the point $1$ if we have $\lim_{x \to 0} \rho'(x) = 0$. An easy exercise shows that if $\rho$ is convex, then this last condition is actually equivalent to $\Omega_\rho$ not being contained in any non-tangential region $\Gamma(c)$ for any $c$ (recall \eqref{E:GammaRegionDef}). The following theorem shows the failure of a direct generalization of the classical result to general approach regions.

\begin{mainthm}\thlabel{T:maintheorem3}
    Assume that $\rho$ is continuously differentiable and increasing on $(0,\infty)$, $\rho(x) > 0$ for $x > 0$, and 
    \[ \lim_{x \to 0} \rho'(x) = 0.\]
    Then there exists $b: \D \to \D$ such that \[\sup_{z \in \Omega_\rho} \|k^b_z\|_b < \infty\] but \[ \lim_{\substack{z \to 1 \\ z \in \Omega_\rho}} b'(z)\] does not exist.
\end{mainthm}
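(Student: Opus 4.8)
The plan is to construct $b$ as a Blaschke product whose zeros are placed along the boundary $\partial\Omega_\rho$ near the point $1$, arranged so that the uniform bound $\sup_{z \in \Omega_\rho}\|k^b_z\|_b < \infty$ holds (equivalently, by \thref{T:maintheorem1} with $m=0$, the uniform integrability-type condition $(iii)$), while the derivative $b'(z)$ oscillates as $z \to 1$ inside $\Omega_\rho$. The key mechanism is that condition \eqref{E:FzCond} only sees zeros $a_n$ lying in the sector $S_z$, which is comparable to a non-tangential sector of aperture $\sim \arg z$; a zero placed on $\partial\Omega_\rho$ at angular coordinate $\theta$ will contribute to $S_z$ only for $z$ with $\arg z$ comparable to $\theta$, and because $\rho'(0^+) = 0$ the distance from such a zero to the vertex $1$ is much larger (in the relevant scale) than $|\arg z|$, which damps its contribution to the sum in \eqref{E:FzCond}. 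Thus a geometrically lacunary sequence of zeros on $\partial\Omega_\rho$ can be made to satisfy $(iii)$ of \thref{T:maintheorem1} — one checks $\sum_n \frac{1-|a_n|^2}{|1-a_n|^2} < \infty$ from the lacunarity, and the supremum in \eqref{E:FzCond} is finite because at any given $z$ only boundedly many zeros lie in $S_z$ and each contributes a controlled amount.

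Next I would arrange the zeros so that $b'(z)$ fails to converge. The natural idea is to use two interlaced lacunary subsequences: one subsequence $(a_n)$ of zeros placed exactly on (or just inside) $\partial\Omega_\rho$ along a particular ray-like sequence of angles $\theta_n \downarrow 0$, and to exploit the fact that as $z$ moves radially (or along $\partial\Omega_\rho$) past the angular scale of $a_n$, the single-Blaschke-factor derivative $b_n'(z)/b_n(z)$ has modulus $\sim \frac{1-|a_n|}{|1-\bar a_n z|^2}$, which is large (of order $1/(1-|a_n|)$) when $z$ is at the ``same scale'' as $a_n$ and small otherwise. By choosing the radial profile of the zeros — i.e. how close $1-|a_n|$ is to $\rho(\theta_n)$ versus much smaller — one can make $|b'(z)|$ alternately large and small along a sequence $z_k \to 1$ in $\Omega_\rho$, or alternatively rig the \emph{argument} of $b'(z)$ to rotate. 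I would formalize this by picking the angles $\theta_n$ so lacunary that the factors essentially decouple, then choosing, say, the odd-indexed zeros deep inside $\Omega_\rho$ (so they barely affect $b'$ on the approach sequence) and the even-indexed ones right on $\partial\Omega_\rho$, producing spikes in $|b'(z)|$ at a sequence of points while $|b'(z)|$ returns near a fixed smaller value in between.

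The main obstacle will be simultaneously controlling \eqref{E:FzCond} (which must stay \emph{bounded}, uniformly over all of $\Omega_\rho$, not just along the approach sequence) and forcing genuine oscillation of $b'$. These pull in opposite directions: making a zero influence $b'$ strongly means placing it close to the approach path, but then it risks inflating $\sum_{a_n \in S_z}\frac{1-|a_n|^2}{|1-\bar a_n z|^2}$ for nearby $z$. The resolution is quantitative: one exploits that $\rho'(0^+)=0$ gives an extra gain of $\rho(\theta_n)/\theta_n \to 0$, so a zero on $\partial\Omega_\rho$ at scale $\theta_n$ sits at distance $\sim\theta_n$ from $1$ but at ``non-tangential depth'' only $\rho(\theta_n) \ll \theta_n$; this means $\frac{1-|a_n|}{|1-\bar a_n z|^2}$ can be made as small as we like (by choosing $1-|a_n|$ a small fixed multiple of $\rho(\theta_n)$ and using lacunarity to separate the scales) while still, because $1-|a_n|$ is tiny, the derivative spike $1/(1-|a_n|)$ is enormous. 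Carefully matching these two estimates — a bounded contribution to \eqref{E:FzCond} but an unbounded oscillation of $b'$ — is the crux, and it is exactly where the hypothesis $\lim_{x\to 0}\rho'(x)=0$ is used in an essential, quantitative way. I expect the remaining steps (verifying $(iii)$ of \thref{T:maintheorem1}, hence $(i)$ and $(ii)$, and checking non-existence of the limit of $b'$) to be routine estimates with lacunary products once the zero configuration is pinned down.
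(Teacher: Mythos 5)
Your high-level strategy (a lacunary family of ``singularities'' at angles $\theta_n\downarrow 0$, each tuned to perturb $b'$ by an amount that does not decay while keeping condition $(iii)$ of \thref{T:maintheorem1} uniformly bounded) is the right one, but the concrete zero placement you propose breaks the hypothesis you need to preserve. A zero $a_n$ on $\partial\Omega_\rho$, or ``deep inside $\Omega_\rho$'', is fatal: points $z\in\Omega_\rho$ approach $a_n$, where $|b(z)|\to 0$ while $1-|z|\to 1-|a_n|$, so $\|k^b_z\|_b^2=\frac{1-|b(z)|^2}{1-|z|^2}\gtrsim \frac{1}{2(1-|a_n|)}\to\infty$; indeed boundedness of the kernel norms forces $|b(z)|\to 1$ in $\Omega_\rho$ (see the Corollary at the end of Section 3), so the zeros must stay pseudohyperbolically far from $\Omega_\rho$. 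Your fallback of taking $1-|a_n|=c\,\rho(\theta_n)$ with a fixed small $c$ also fails: for $z\in\Omega_\rho$ at angle $\theta_n$ just inside $\partial\Omega_\rho$ one has $|1-\conj{a_n}z|\asymp\rho(\theta_n)$, so the single term $\frac{1-|a_n|^2}{|1-\conj{a_n}z|^2}\asymp c/\rho(\theta_n)\to\infty$ and $(iii)$, hence $(i)$, is violated. The scaling you need but never pin down is $1-|a_n|\asymp\rho(\theta_n)^2$: that makes the displayed term $O(1)$ while the logarithmic-derivative term $\frac{1-|a_n|^2}{(z-a_n)(1-\conj{a_n}z)}$ is still of order $1$ at $z=(1-\rho(\theta_n))e^{i\theta_n}$, which is exactly enough to obstruct convergence. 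Relatedly, your mechanism of ``derivative spikes of order $1/(1-|a_n|)$'' cannot occur at all: under $\sup_{z\in\Omega_\rho}\|k^b_z\|_b<\infty$ the derivative $b'$ is uniformly bounded on $\Omega_\rho$, so the failure of $\lim b'$ must be a bounded, non-decaying oscillation, not a blow-up.

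For comparison, the paper implements precisely this $\rho^2$-scaling in the cleaner setting of a singular inner function: it takes $\nu=\sum_n\rho^2(x_n)\delta_{e^{ix_n}}$ with $x_{n+1}<x_n/2$ and $\sum_n\rho^2(x_n)/x_n^2<\infty$, proves $\sup_{z\in\Omega_\rho}\int_\T|z-\zeta|^{-2}d\nu(\zeta)<\infty$ using a calculus lemma that exploits $\rho'(0^+)=0$, and then shows that along $z_n=(1-\rho(x_n))e^{ix_n}$ the atom at $e^{ix_n}$ contributes exactly $2e^{-ix_n}\to 2$ to $-b'/b$, so the limit along $(z_n)$ differs from the radial limit by $2$. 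Your write-up gestures at the right tension but does not resolve it; as it stands the construction does not satisfy $\sup_{z\in\Omega_\rho}\|k^b_z\|_b<\infty$.
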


However, as a consequence of \thref{T:maintheorem2} we can prove the following partial result.

\begin{mainthm}\thlabel{T:maintheorem4}
    Let $\Omega$ be an approach region at $1$. If we have norm convergence $k^b_z \to k^b_1$ in $\hb$ as $z \to 1$ in $\Omega$, then the limits \[b'(1) = \lim_{\substack{z \to 1 \\ z \in \Omega}} b'(z), \quad b(1) = \lim_{\substack{z \to 1 \\ z \in \Omega}} b(z) \] exist, and $|b(1)| = 1$.
\end{mainthm}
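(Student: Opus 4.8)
The plan is to deduce \thref{T:maintheorem4} from the $m=0$ case of \thref{T:maintheorem2}, together with basic properties of the de Branges--Rovnyak space. First I would observe that norm convergence $k^b_z \to k^b_1$ in $\hb$ as $z \to 1$ in $\Omega$ certainly forces $\sup_{z \in \Omega}\|k^b_z\|_b < \infty$, so condition $(i)$ of \thref{T:maintheorem1} holds with $m = 0$, and hence so do $(ii)$ and $(iii)$. In particular, by Sarason's identity \eqref{E:HbNormEq}, the quantity $\tfrac{1-|b(z)|^2}{1-|z|^2}$ stays bounded as $z \to 1$ in $\Omega$; since $\Omega$ contains the radial segment $(r_0, 1)$ near $1$ (by the star-shapedness assumption), the classical Carathéodory--Julia Theorem \thref{T:CJTheorem} applies and $b$ has an angular derivative in the sense of Carathéodory at $1$, so the \emph{non-tangential} limits $b(1)$ and $b'(1)$ exist and $|b(1)| = 1$. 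What remains is to upgrade these non-tangential limits to limits over the possibly larger region $\Omega$.

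For that, I would apply the reproducing formula. For $f \in \hb$ we have $f(z) = \ip{f}{k^b_z}_b$, and norm convergence $k^b_z \to k^b_1$ immediately gives $f(z) \to \ip{f}{k^b_1}_b$ as $z \to 1$ in $\Omega$; that is, $f(1) := \lim_{z\to 1, z\in\Omega} f(z)$ exists for every $f \in \hb$, and agrees with the non-tangential value since the non-tangential limit is a sub-limit. Now the key point is that $b$ itself, while not necessarily in $\hb$, is handled by a standard trick: the function $1 - \conj{b(1)} b$ belongs to $\hb$. Indeed $k^b_1 \in \hb$ under our hypotheses, and $(1-w) k^b_1(w) = 1 - \conj{b(1)}b(w)$; multiplication by $(1-w)$ — or more precisely the relation between $k^b_1$ and $b$ coming from \eqref{E:BoundaryKernelEq} — exhibits $1 - \conj{b(1)}b$ as an element of $\hb$ (this is exactly the reasoning behind Sarason's extension theorem). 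Applying the previous paragraph to $f = 1 - \conj{b(1)}b$ yields that $b(z)$ has a limit as $z \to 1$ in $\Omega$, necessarily equal to the non-tangential value $b(1)$, which has modulus $1$.

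For the derivative $b'$, I would differentiate the same relation. From $1 - \conj{b(1)}b(w) = (1-w)k^b_1(w)$ we get, upon differentiating in $w$,
\[
-\conj{b(1)} b'(w) = -k^b_1(w) + (1-w)(k^b_1)'(w).
\]
Now $(k^b_1)'$ is, up to a constant conjugate factor, the boundary derivative kernel $\conj{\partial_z} k^b_1$, which lies in $\hb$ because the $m=1$ kernel norms are controlled: \thref{T:maintheorem2} is being invoked with the hypothesis of $(i)$ for $m=0$, so I should be careful here. The honest route is: norm convergence $k^b_z \to k^b_1$ for $m=0$ gives, via \thref{T:maintheorem2} (the $m=0$ equivalence), that condition $(ii)$ of that theorem holds with $m=0$; but this does \emph{not} immediately give the $m=1$ bound. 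Instead I would argue directly: write $b'(z)$ in terms of $k^b_z$ and $(k^b_z)' = $ (constant)$\cdot \conj{\partial_z} k^b_z$ via the identity $1 - \conj{b(z)}b(w) = (1-\conj{z}w)k^b_z(w)$, differentiate in $w$ and evaluate, expressing $b'(z)$ as an inner product against a combination of $k^b_z$ and $\conj{\partial_z}k^b_z$. The first-order Carathéodory--Julia hypothesis already secures the non-tangential existence of $b'(1)$ and, crucially, the membership of $\conj{\partial_z}k^b_1$ in $\hb$ with norm convergence $\conj{\partial_z}k^b_z \to \conj{\partial_z}k^b_1$ along non-tangential sequences; the additional input needed over $\Omega$ is precisely that $k^b_z \to k^b_1$ and $\conj{\partial_z}k^b_z \to \conj{\partial_z}k^b_1$ in norm as $z\to 1$ in $\Omega$. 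The first is our hypothesis; the second I would obtain from \thref{T:maintheorem2} applied with $m=1$, after checking that norm convergence of $k^b_z$ (the $m=0$ kernels) forces the $m=0$ limit-zero condition $(ii)$ of \thref{T:maintheorem2}, which by the nesting $S_z$, $E_z$ and the extra smoothing from the higher power $|1-\conj{\zeta}z|^{-4}$ versus $|1-\conj{\zeta}z|^{-2}$ is \emph{not} automatic --- so this is the delicate point.

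The main obstacle, then, is bridging the gap between order $m=0$ and order $m=1$: norm convergence of the $0$-th order kernels over $\Omega$ does not by itself deliver norm convergence of the first-order kernels over $\Omega$, so a direct argument is required. I expect the cleanest fix is to avoid $(k^b_z)'$ altogether and instead write $b'(z)$ using a difference quotient of $b$: since $b(z) \to b(1)$ uniformly on $\Omega$ and $b$ is analytic, if $\Omega$ is star-shaped at $1$ one can recover the limit of $b'(z)$ from the limit of $\frac{b(z) - b(1)}{z - 1}$ along segments, reducing everything to the already-established boundary behavior of $b$ together with the classical fact that the non-tangential $b'(1)$ exists. Concretely: fix $z \in \Omega$ near $1$, integrate $b'$ along the segment from $z$ to $1$ (inside $\Omega$ by star-shapedness), and use that $b'$ is bounded on a non-tangential subregion plus an estimate on the thin part of $\Omega$ outside it --- this is where one pays attention to the geometry, but it is routine once $b(1)$ and the non-tangential $b'(1)$ are in hand. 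I would present the $b$-limit argument in full and then note that the $b'$-limit follows either by the segment-integration argument or by invoking \thref{T:maintheorem2} at order $1$, whichever the referee prefers.
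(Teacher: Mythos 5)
Your handling of $b(1)$ reaches the right conclusion, but the route is shaky: the claim that $1-\conj{b(1)}b\in\hb$ is false in general (for inner $b$ one checks directly that $1-\conj{b(1)}b\perp bH^2$ fails unless $b(0)=b(1)$, since $\hb$ is not invariant under multiplication by $1-w$). It is also unnecessary: norm convergence gives pointwise convergence $k^b_z(w_0)\to k^b_1(w_0)$ at a fixed $w_0$ with $b(w_0)\neq 0$, and \eqref{E:HbKernelEq} together with \eqref{E:HbNormEq} then yields $\lim_{z\to 1,z\in\Omega}b(z)$ and $|b(1)|=1$. The genuine gap is the derivative, which you correctly flag as ``the delicate point'' but do not close; neither of your two proposed routes can work. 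Invoking \thref{T:maintheorem2} at order $m=1$ requires the equivalent conditions of \thref{T:maintheorem1} at order $1$, i.e. $\sup_{z\in\Omega}\|\conj{\partial_z}k^b_z\|_b<\infty$, which is strictly stronger than the hypothesis: a Blaschke product with $\sum_n(1-|a_n|^2)|1-a_n|^{-2}<\infty$ but $\sum_n(1-|a_n|^2)|1-a_n|^{-4}=\infty$ (e.g. $a_n=(1-n^{-4})e^{i/n}$, with $\Omega=\Gamma$) satisfies the hypothesis of Theorem D yet no first-order kernel bound holds. The segment-integration/difference-quotient route is also doomed in principle: its only inputs are the existence of $\lim_{z\to1,z\in\Omega}b(z)$ and of the non-tangential $b'(1)$, and both of these already follow from the weaker condition $\sup_{z\in\Omega}\|k^b_z\|_b<\infty$, for which \thref{T:maintheorem3} produces a $b$ with $\lim_{z\to1,z\in\Omega_\rho}b'(z)$ nonexistent. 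Concretely, recovering $b'(z)$ from the difference quotient by a Cauchy estimate on a disk of radius $c(1-|z|)$ about $z$ leaves an error of size $o(|z-1|)/(1-|z|)$, which does not vanish in a tangential region where $|z-1|\gg 1-|z|$.

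The idea you are missing, and the one the paper uses, is the logarithmic derivative. Writing
\[
\frac{b'(z)}{b(z)}=\sum_n\frac{1-|a_n|^2}{(z-a_n)(1-\conj{a_n}z)}-\int_\T\frac{2\zeta\,d\mu(\zeta)}{(\zeta-z)^2},
\]
as in \eqref{E:BlaschkeLogarithmicDerivative} and \eqref{E:LogarithmicDerivativeZeroFree}, every term is comparable in modulus to $\frac{1-|a_n|^2}{|1-\conj{a_n}z|^{2}}$ or $\frac{d\mu(\zeta)}{|1-\conj{\zeta}z|^{2}}$ (using $|z-a_n|\geq\tfrac12|1-\conj{a_n}z|$ near $1$), i.e. to the order-$2$ quantities governed by the $m=0$ theory; no first-order kernel information is needed. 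The norm-convergence hypothesis, via \thref{T:maintheorem2} with $m=0$ and \thref{L:TheoremBLemma3}, gives convergence of the associated $\ell^1$- and $L^1$-norms as $z\to1$ in $\Omega$, which combined with pointwise convergence of the integrands (the Scheff\'e-type lemma already used in \thref{L:ProofTheorem3BlaschkeNormConvLemma}) forces convergence of the whole right-hand side; since $b(z)\to b(1)\neq 0$, the limit of $b'(z)$ over $\Omega$ follows. This is precisely where the full strength of norm convergence, as opposed to mere boundedness of $\|k^b_z\|_b$, enters.
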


We do not know if the converse of the result holds for any approach region $\Omega$ essentially larger than the non-tangential ones, and we have not produced any counter-example. We therefore leave the following question unanswered.

\begin{question*}
For a general approach region $\Omega$, is there a way to characterize the existence of the limits \[b'(1) = \lim_{\substack{z \to 1 \\ z \in \Omega}} b'(z), \quad b(1) = \lim_{\substack{z \to 1 \\ z \in \Omega}} b(z), \quad |b(1)| = 1,\] in terms of some type of behavior of the kernel functions $k^b_z$, $z \in \Omega$?
\end{question*}


\section{Proof of Theorem A}
\label{S:ProofMT1Sec}

It will be convenient to divide the proof into the two special cases of $b$ being a Blaschke product and $b$ being zero-free in $\D$. The result in general will be shown to follow from these two special cases.

The proof of the equivalence of conditions $(i)$ and $(iii)$ in \thref{T:maintheorem1} is the most involved part, but it will be established in a natural way. In the case that $b$ is a Blaschke product, certain observations and the Leibniz rule will express $\partial^m_z \conj{\partial_z}^m \|k^{b}_z\|_{b}^2 = \|\conj{\partial_z}^m k^{b}_z\|_{b}^2$ as a sum of various terms involving derivatives of $b$ or its factors, and a single positive quantity of magnitude proportional to the terms appearing in $(iii)$ of \thref{T:maintheorem1}. Estimation of the derivatives of $b$ and its factors in \thref{L:bjSupBoundLemma} and \thref{L:bZeroFreeSupBoundLemma} will show that the positive quantity is the principal one. The complementary case of zero-free $b$ will be treated in a way not much different.

\subsection{Blaschke product case.} \label{S:Theorem1ProofBlaschkeSec}

For the remainder of this section, let $b$ be the Blaschke product corresponding to the zero sequence $(a_n)_{n \geq 1}$. The condition $(iii)$ in \thref{T:maintheorem1} is then 
\begin{equation}
    \label{E:PartIIIforBlaschke}
    \sup_{z \in \Omega} \sum_{j=1}^\infty \frac{1-|a_j|^2}{|1-\conj{a_j}z|^{2m+2}} < \infty.
\end{equation} Let $b_j(z)$ be the finite Blaschke product product corresponding to $(a_n)_{n=1}^j$ ($b_0 \equiv 1)$. 
For a product $b = B_1B_2$, we have the readily-established decomposition formula
\begin{equation}
    \label{E:KernelDecompFormula}
    \|k^b_z\|_b^2 =  \|k^{B_1}_z\|_{B_1}^2 + |B_1(z)|^2\|k^{B_2}_z\|_{B_2}^2.
\end{equation}
Recall also the well-known identity
\begin{equation}
        \label{E:MagicFormula} 1 - \frac{|z-a|^2}{|1-\conj{a}z|^2} = \frac{(1-|z|^2)(1-|a|^2)}{|1-\conj{a}z|^2}, \quad a,z \in \D.
    \end{equation}
In the case of a Blaschke factor \[ B_1(z) = \frac{|a|}{a}\frac{a-z}{1-\conj{a}z}, \quad a,z \in \D,\] we note that \eqref{E:MagicFormula} implies
\[ \|k^{B_1}_z\|_{B_1}^2 = \frac{1-|a|^2}{|1-\conj{a}z|^2}.\]
Now, an iteration of the above decomposition formula leads to
\begin{equation}
    \label{E:BlaschkeNormFormula}
    \|k^b_z\|_b^2 = \sum_{j \geq 1} |b_{j-1}(z)|^2 \frac{1-|a_j|^2}{|1-\conj{a_j}z|^2}.
\end{equation} As a consequence of the Blaschke condition on $(a_n)_n$, it is not hard to see that the derivatives of $\|k^b_z\|^2_b$ can be computed by term-wise differentiation of the series in \eqref{E:BlaschkeNormFormula}. An application of the Leibniz rule and an exchange of the order of summations shows that 

\begin{equation}
    \label{E:BlaschkeDerivativeKernelNormFormula}
    \partial^m_z \conj{\partial_z}^m \|k^{b}_z\|_{b}^2 = \sum_{k,l = 0}^m C_{k,l}(z)
\end{equation} where
\begin{equation}
    \label{E:CklDef}
    C_{k,l}(z) = \sum_{j \geq 1} {m \choose k} {m \choose l} b^{(m-k)}_{j-1}(z) \conj{b^{(m-l)}_{j-1}(z)} \conj{a_j}^k a_j^l k!l!\frac{1-|a_j|^2}{(1-\conj{a_j}z)^{1+k}(1-a_j\conj{z})^{1+l}}
\end{equation} 

According to the next lemma, the coefficients of the terms $\frac{1-|a_j|^2}{(1-\conj{a_j}z)^{1+k}(1-a_j\conj{z})^{1+l}}$ in the expression for $C_{k,l}(z)$ are well-behaved.

\begin{lem} \thlabel{L:bjSupBoundLemma} Let $b$ and $b_j$ be Blaschke products as above.

\begin{itemize}
    \item If $(i)$ in \thref{T:maintheorem1} holds, then the derivatives up to order $m$ of $b_j$ are uniformly bounded in $\Omega$, and $\lim_{z \to 1, z \in \Omega} |b(z)| = 1$.
    \item If $(iii)$ in \thref{T:maintheorem1} holds, then the derivatives up to order $2m+1$ of $b_j$ are uniformly bounded in $\Omega$, and $\lim_{z \to 1, z \in \Omega} |b(z)| = 1$. Moreover, the limits \[\lim_{\substack{z \to 1 \\ z \in \Omega}} b^{(k)}(z), \lim_{\substack{z \to 1 \\ z \in \Omega}} b_j^{(k)}(z), \quad j \geq 0, k = 0, 1, \ldots, 2m\] exist.
\end{itemize}
\end{lem}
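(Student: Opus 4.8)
The plan is to establish both items of \thref{L:bjSupBoundLemma} by relating derivatives of the finite Blaschke products $b_j$ (and their weak limit $b$) to the kernel norm quantities appearing in conditions $(i)$ and $(iii)$. The starting observation is the identity \eqref{E:BlaschkeNormFormula}, which exhibits $\|k^b_z\|_b^2$ as the series $\sum_{j \geq 1} |b_{j-1}(z)|^2 \frac{1-|a_j|^2}{|1-\conj{a_j}z|^2}$; since each summand is non-negative and $|b_{j-1}(z)| \to 1$ as $j \to \infty$ (tail of a convergent Blaschke product), a uniform bound on $\|k^b_z\|_b$ over $\Omega$ forces the partial tail $\sum_{n > j}\frac{1-|a_n|^2}{|1-\conj{a_n}z|^2}$, and hence also the reproducing kernel norm for the residual Blaschke product, to be uniformly bounded on $\Omega$. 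I would then invoke the decomposition formula \eqref{E:KernelDecompFormula} to see that each individual factor's contribution, as well as that of $b_j$ itself, is controlled. For the statement about $|b(z)| \to 1$: once $\sum_n \frac{1-|a_n|^2}{|1-\conj{a_n}z|^2}$ is uniformly bounded on $\Omega$, condition \eqref{E:AhernClarkCond}-type bounds and Fatou give $\sum_n \frac{1-|a_n|^2}{|1-a_n|^2} < \infty$, which is exactly the Frostman/Ahern-Clark condition guaranteeing that the Blaschke product extends continuously to $1$ with $|b(1)| = 1$, at least non-tangentially; extending this along $\Omega$ uses that the modulus $|b(z)|$ can be estimated below using the same series.

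Next I would handle the derivative bounds. The key tool is a pointwise estimate for $|b_j^{(k)}(z)|$ in terms of the quantity $\sum_{n \le j} \frac{1-|a_n|^2}{|1-\conj{a_n}z|^{k+1}}$ or a similar sum; such bounds follow from the logarithmic derivative formula $\frac{b_j'(z)}{b_j(z)} = \sum_{n \le j} \frac{|a_n|^2 - 1}{(a_n - z)(1-\conj{a_n}z)}$ — wait, more precisely $\frac{b_j'}{b_j}(z) = \sum_{n\le j}\bigl(\frac{1}{a_n-z} - \frac{-\conj{a_n}}{1-\conj{a_n}z}\bigr)$ which simplifies to $\sum_{n\le j}\frac{|a_n|^2-1}{(a_n-z)(1-\conj{a_n}z)}$ — combined with $|a_n - z| \asymp |1-\conj{a_n}z|$ uniformly for $z$ bounded away from $\T$ along... no, that comparison fails near $1$; instead I would use $|a_n - z|^2 = |1-\conj{a_n}z|^2 - (1-|a_n|^2)(1-|z|^2) \le |1-\conj{a_n}z|^2$, so $\frac{1-|a_n|^2}{|a_n-z||1-\conj{a_n}z|} \le \frac{(1-|a_n|^2)^{1/2}}{|1-\conj{a_n}z|}$... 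I would then show by induction on $k$, using the Leibniz/Faà di Bruno expansion of $b_j^{(k)} = b_j \cdot (\text{polynomial in logarithmic derivatives})$ together with Cauchy–Schwarz, that $|b_j^{(k)}(z)|$ is dominated by a finite sum of products of terms $\sum_{n} \frac{1-|a_n|^2}{|1-\conj{a_n}z|^{p+1}}$ with total "weight" $\sum p = k$; under condition $(iii)$ each such sum with $p \le 2m+1$ is uniformly bounded on $\Omega$ (the case $p = 2m+1$ costs half a power of $1-|a_n|^2$ but that is harmless), giving uniform boundedness of derivatives up to order $2m+1$; under the weaker condition $(i)$, only $\sum_n \frac{1-|a_n|^2}{|1-\conj{a_n}z|^2}$ is bounded, which controls derivatives up to order $m$ only (each power beyond $2$ costing one more factor of $\frac{1}{|1-\conj{a_n}z|}$, absorbed in pairs by Cauchy–Schwarz against the order-$2$ sum — this gives order $m$).

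For the existence of the limits $\lim_{z\to 1, z\in\Omega} b^{(k)}(z)$ and $\lim_{z\to 1, z\in\Omega} b_j^{(k)}(z)$ for $k \le 2m$ under $(iii)$: for fixed $j$ this is trivial since $b_j$ is a finite Blaschke product, analytic in a neighborhood of $1$ whenever $1$ is not among the (finitely many) $a_n$ with $n \le j$ — and one may harmlessly assume $a_n \ne 1$. For $b$ itself I would argue that $b^{(k)}(z) = \sum_{n}(\text{term}_n(z))$ converges, with the series of $k$-th derivatives converging uniformly on $\Omega$ near $1$ thanks to the summability condition \eqref{E:AhernClarkCond}-analog with exponent $2m+2$ in $(iii)$: the tail $\sum_{n > N}$ of the $k$-th derivative series is bounded, uniformly in $z \in \Omega$, by $\epsilon$ once $N$ is large, using the same Cauchy–Schwarz-based estimates but now with the *tail* of the convergent sum $\sum_n \frac{1-|a_n|^2}{|1-a_n|^{2m+2}}$ and the inequality \eqref{E:EzEstimate} to replace $|1-\conj{a_n}z|$ by $|1-a_n|$ for those $a_n$ outside the sector $S_z$ (only finitely many $a_n$ lie inside $S_z$ for $z$ near $1$, or their contribution is handled by the uniform bound \eqref{E:FzCond}). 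Then $b^{(k)}(z)$ is a uniform limit of functions each having a limit at $1$ along $\Omega$, hence has a limit. The main obstacle I anticipate is getting the bookkeeping of the Cauchy–Schwarz/Leibniz estimates exactly right so that condition $(i)$ yields precisely order $m$ and condition $(iii)$ precisely order $2m+1$ (and the limit existence exactly up to order $2m$) — in particular making sure the half-integer power losses from splitting $(1-|a_n|^2) = (1-|a_n|^2)^{1/2}\cdot(1-|a_n|^2)^{1/2}$ are distributed so that no term needs a power of $\frac{1}{|1-\conj{a_n}z|}$ exceeding what the hypothesis controls; this is routine but delicate, and is presumably where \thref{L:bjSupBoundLemma} and its zero-free counterpart \thref{L:bZeroFreeSupBoundLemma} do the real work feeding into \eqref{E:BlaschkeDerivativeKernelNormFormula}.
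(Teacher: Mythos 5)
Your treatment of the second bullet is essentially the paper's: the pseudo-hyperbolic separation $|z-a_n|\geq \tfrac12|1-\conj{a_n}z|$ for $z\in\Omega$ near $1$ (which follows from \eqref{E:MagicFormula} and the boundedness of $\sum_n\frac{(1-|z|^2)(1-|a_n|^2)}{|1-\conj{a_n}z|^2}$ --- note that your displayed inequality $|a_n-z|^2\leq|1-\conj{a_n}z|^2$ goes in the wrong direction for controlling $1/|a_n-z|$, and the bound you actually need is a lower one), followed by repeated differentiation of $b_j'=b_j\cdot\sum_{n}\frac{1-|a_n|^2}{(z-a_n)(1-\conj{a_n}z)}$, which under $(iii)$ controls $b_j^{(k)}$ for $k\leq 2m+1$ because $(iii)$ bounds all the sums with exponent up to $2m+2$. (For the existence of the limits the paper then just observes that a bounded derivative of order $2m+1$ makes $b_j^{(2m)}$ Lipschitz on $\Omega$, which is cleaner than your tail-of-the-series argument; your claim that only finitely many $a_n$ lie in $S_z$ is false in general, and the uniform bound \eqref{E:FzCond} gives boundedness, not the smallness you would need.)

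The genuine gap is in the first bullet. Condition $(i)$ for a given $m$ only gives you, directly, $\sup_{\Omega}\|\conj{\partial_z}^i k^b_z\|_b<\infty$ for $i\leq m$, and from the $i=0$ case the boundedness of $\sum_n\frac{1-|a_n|^2}{|1-\conj{a_n}z|^2}$; it does \emph{not} give you the higher-exponent sums $\sum_n\frac{1-|a_n|^2}{|1-\conj{a_n}z|^{p+1}}$ for $p\geq 2$ --- that is precisely the implication $(i)\Rightarrow(iii)$ which this lemma is an ingredient of, so assuming it here is circular. Your proposed repair, absorbing the extra powers ``in pairs by Cauchy--Schwarz against the order-$2$ sum,'' does not work: Cauchy--Schwarz bounds $\sum_n\frac{1-|a_n|^2}{|1-\conj{a_n}z|^{3}}$ by the geometric mean of the order-$2$ and the order-$4$ sums, so it bootstraps \emph{downward} from higher exponents, never upward from the order-$2$ sum alone. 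Consequently the logarithmic-derivative route cannot reach derivatives of order $\geq 2$ under hypothesis $(i)$. The paper's proof of the first bullet is entirely different and avoids the sums over zeros altogether: since $b_j$ and $b$ are inner, $\h(b_j)$ embeds isometrically in $\hb$, so for a fixed $\lambda$ with $b(\lambda)\neq 0$ one has $\|k^{b_j}_\lambda\|_b\leq\|k^b_\lambda\|_b$ and hence $|\partial_z^m k^{b_j}_\lambda(z)|=|\ip{k^{b_j}_\lambda}{\conj{\partial_z}^m k^b_z}_b|\leq\|k^b_\lambda\|_b\,\|\conj{\partial_z}^m k^b_z\|_b$, uniformly bounded on $\Omega$ by $(i)$; expanding $\partial_z^m k^{b_j}_\lambda(z)$ by Leibniz from the explicit kernel formula \eqref{E:HbKernelEq} isolates the term $\frac{-\conj{b_j(\lambda)}b_j^{(m)}(z)}{1-\conj{\lambda}z}$ against inductively bounded remainders, and $|b_j(\lambda)|\geq|b(\lambda)|>0$ yields the bound on $b_j^{(m)}$. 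You would need an argument of this functional-analytic type (or some other non-circular device) to close the first bullet.
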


\begin{remark}
    Note the asymmetry in the above statement. Since the two conditions $(i)$ and $(iii)$ in \thref{T:maintheorem1} will ultimately be proven to be equivalent, the conclusion in the first part of the lemma can be upgraded to the stronger conclusion of the second part. For the time being, however, the above statement both suffices and is more convenient.
\end{remark}

We will prove the lemma a bit later. Assuming it for now, we see from Cauchy-Schwarz inequality applied to \eqref{E:CklDef} that if either $(i)$ or $(iii)$ in \thref{T:maintheorem1} hold for $b$ and $m$, then there exists a constant $C > 0$  such that
\begin{equation}
    \label{E:CNklEstimate}
|C_{k,l}(z)| \leq C \sqrt{\sum_{j \geq 1} \frac{1-|a_j|^2}{|1-\conj{a_j}z|^{2k+2}}}\sqrt{\sum_{j \geq 1} \frac{1-|a_j|^2}{|1-\conj{a_j}z|^{2l+2}}}, \quad z \in \Omega.
\end{equation} Hence \eqref{E:PartIIIforBlaschke} implies that the left-hand side in \eqref{E:BlaschkeDerivativeKernelNormFormula} is uniformly bounded for $z \in \Omega$. Thus, for Blaschke products $b$, the condition $(iii)$ implies the condition $(i)$ in \thref{T:maintheorem1}. 

Let us now prove the other implication $(i) \Rightarrow (iii)$ in \thref{T:maintheorem1} by induction on $m$. For the base case $m=0$, it suffices to note that by \thref{L:bjSupBoundLemma}, for $z \in \Omega$ sufficiently close to $1$, we have \[ 1/2 \leq |b(z)| \leq |b_{j-1}(z)| \leq 1.\] Hence the case $m=0$ follows from \eqref{E:BlaschkeNormFormula}. For the induction step, we assume that condition $(i)$ implies $(iii)$ for $m-1$, and we suppose $(i)$ holds for $m$. Then the derivative of order $m$ of every function $f \in \hb$ is bounded in $\Omega$, and so the same is true for derivatives of lower order. From principle of uniform boundedness it follows that $\sup_{z \in \Omega} \|\conj{\partial_z}^i k^b_z\|_b < \infty$ for $i \leq m$. By the induction hypothesis, we deduce that
\begin{equation}
    \label{E:IndAssumpt}
    \sup_{z \in \Omega} \sum_{n} \frac{1-|a_n|^2}{|1-\conj{a_n}z|^{2m}} < \infty.
\end{equation}  

The idea is to show that the term $C_{m,m}(z)$ in \eqref{E:BlaschkeDerivativeKernelNormFormula} dominates the other terms. Consider first the terms on the right-hand side of \eqref{E:BlaschkeDerivativeKernelNormFormula} which are indexed by $k < m$, $l < m$. It follows from \eqref{E:IndAssumpt} and the estimate \eqref{E:CNklEstimate}, which is again valid by \thref{L:bjSupBoundLemma}, that
\begin{equation}
    \label{E:CNklLessThanmEstimate}
    \sup_{z \in \Omega} \sum_{k,l = 0}^{m-1}| C_{k,l}(z)| < \infty. 
\end{equation} Now consider the terms on the right-hand side of \eqref{E:BlaschkeDerivativeKernelNormFormula} for which one of the indices $k$ or $l$ is equal to $m$. Similarly, we deduce that
\begin{equation}
    \label{E:CNmkEstimate}
    \sum_{k=0}^{m-1} |C_{k, m}(z)| +  \sum_{l=0}^{m-1} |C_{m, l}(z)| \leq D\sqrt{\sum_{j \geq 1} \frac{1-|a_j|^2}{|1-\conj{a_j}z|^{2m+2}}}, \quad z \in \Omega,
\end{equation} where $D$ is a constant. The remaining term indexed by $k,l = m$ is
\[ C_{m,m}(z) = \sum_{j \geq 1} |b_j(z)|^2 |a_j|^{2m} (m!)^2\frac{1-|a_j|^2}{|1-\conj{a_j}z|^{2m+2}}.\] We may without loss of generality assume that $|a_j| > 1/2$ holds for all $j$ (removing finitely many points from the sequence $(a_j)_j$ clearly does not affect the validity of any of the equivalent conditions in \thref{T:maintheorem1}). As in our earlier proof of the base case $m=0$, it follows from \thref{L:bjSupBoundLemma} that we have
\begin{equation}
    \label{E:CNmmLowerEst}
    C_{m,m}(z) \geq 2^{-2m-2}(m!)^2 \sum_{j \geq 1} \frac{1-|a_j|^2}{|1-\conj{a_j}z|^{2m+2}}
\end{equation} for all $z \in \Omega$ which are sufficiently close to $1$. Now we may combine \eqref{E:BlaschkeDerivativeKernelNormFormula} with the three estimates \eqref{E:CNklLessThanmEstimate}, \eqref{E:CNmkEstimate} and \eqref{E:CNmmLowerEst} to conclude that for $z \in \Omega$ sufficiently close to $1$, we have 
\[ 2^{-2m-2}(m!)^2  \sum_{j \geq 1} \frac{1-|a_j|^2}{|1-\conj{a_j}z|^{2m+2}} \leq C' + D'\sqrt{\sum_{j \geq 1} \frac{1-|a_j|^2}{|1-\conj{a_j}z|^{2m+2}}} + \partial^m_z \conj{\partial_z}^m \|k^b_z\|_b^2\] where $C'$ and $D'$ are positive constants. If $(i)$ of \thref{T:maintheorem1} holds, then this inequality can hold only if $(iii)$ of \thref{T:maintheorem1} holds. This completes the proof of the equivalence $(i) \Leftrightarrow (iii)$ in the case of Blaschke products (assuming the earlier-stated lemma).

It remains to show that condition $(ii)$ in \thref{T:maintheorem1} is equivalent to the remaining two. It certainly implies $(i)$, by the principle of uniform boundedness and \eqref{E:DerivativReproducingFormula}. Conversely, if $(i)$ and $(iii)$ hold, then from the second part of \thref{L:bjSupBoundLemma} and and from \eqref{E:HbKernelEq} we conclude that the derivative of order $m$ of any finite combination of kernels functions in $\hb$ have a limit as $z \to 1$ in $\Omega$. Thus $\lim_{z \to 1, z \in \Omega} f^{(m)}(z) = \lim_{z \to 1, z \in \Omega} \ip{f}{\conj{\partial_z}^m k^b_z}_b$ exists for a dense set of functions $f$ in $\hb$. This together with $\sup_{z \in \Omega} \|\conj{\partial_z}^m k^b_z\|_b$ means that $\conj{\partial_z}^m k^b_z$ converges weakly as $z \to 1$ in $\Omega$, which is clearly equivalent to the condition $(ii)$ in \thref{T:maintheorem1}.

\subsection{Zero-free case.}

Let us now consider the case of a zero-free symbol $b$. In \eqref{E:bInnerOuterFact}, set $d\mu := d\nu - \log |b| d\m$, so that
\begin{equation}
    \label{E:zeroFreeBFormula}
    b(z) = \exp \Bigg( -\int_\T \frac{\zeta + z}{\zeta - z} d\mu(\zeta)\Bigg), \quad z \in \D.
\end{equation}
Condition $(iii)$ in \thref{T:maintheorem1} becomes
\begin{equation}
    \label{E:PartIIIforZeroFree}
    \sup_{z \in \Omega} \int_\T \frac{d\mu(\zeta)}{|1-\conj{\zeta}z|^{2m+2}} < \infty. 
\end{equation}
Since $\Re \frac{\zeta + z}{\zeta - z} = \frac{1-|z|^2}{|1-\conj{\zeta}z|^2}$, \eqref{E:zeroFreeBFormula} implies that
\begin{equation}
    \label{E:LogExprZeroFree}
    \frac{-\log |b(z)|^2}{2(1-|z|^2)} = \int_\T \frac{d\mu(\zeta)}{|1-\conj{\zeta}z|^{2}}.
\end{equation} Since $-\log x = \sum_{n=1}^\infty \frac{(1-x)^n}{n}$ for $x \in (0,1)$, we see from this expression and from \eqref{E:HbNormEq} that
\begin{equation}
    \label{E:muSKbExpression}
    \int_\T \frac{d\mu(\zeta)}{|1-\conj{\zeta}z|^{2}} = S(|b(z)|^2)\|k^b_z\|_b^2
\end{equation} where
\begin{equation}
    \label{E:Sformula} S(x) := \sum_{n=1}^\infty \frac{(1-x)^{n-1}}{2n}, \quad x \in (0, 1].
\end{equation} Clearly $S$ is a smooth function on $(0,1)$, and $\lim_{x \to 1} S^{(k)}(x)$ exists for each non-negative integer $k$. Moreover, $S$ is bounded from below by $1/2$. Applying the operator $\partial^m_z \conj{\partial_z}^m$ and the Leibniz rule to \eqref{E:muSKbExpression}, we see that
\begin{equation}
    \label{E:muSKbExpressionLEibniz}
    \int_\T \frac{(m!)^2d\mu(\zeta)}{|1-\conj{\zeta}z|^{2m+2}} = \sum_{i,j=0}^m P_{i,j}(z) \cdot \partial^i_z \conj{\partial_z}^j \|k^b_z\|_b^2
\end{equation} where $P_{i,j}(z)$ is a product of derivatives of $S$ evaluated at $|b(z)|^2$, and of derivatives of $b$, all up to order $m$. Moreover, $P_{0,0}(z) = S(|b(z)|^2)$. 

We will need a variant of \thref{L:bjSupBoundLemma}.

\begin{lem} \thlabel{L:bZeroFreeSupBoundLemma} Let $b$ be zero-free in $\D$.

\begin{itemize}
    \item If $(i)$ in \thref{T:maintheorem1} holds, then the derivatives up to order $m$ of $b$ are uniformly bounded in $\Omega$, and $\lim_{z \to 1, z \in \Omega} |b(z)| = 1$.
    \item If $(iii)$ in \thref{T:maintheorem1} holds, then the derivatives up to order $2m+1$ of $b$ are uniformly bounded in $\Omega$, and $\lim_{z \to 1, z \in \Omega} |b(z)| = 1$. Moreover, the limits \[\lim_{\substack{z \to 1\\ z \in \Omega}} b^{(k)}(z), \quad k = 0, 1, \ldots, 2m\] exist.
\end{itemize}
\end{lem}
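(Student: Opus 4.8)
The plan is to express the derivatives of $b$ through the positive measure $\mu:=\nu-\log|b|\,d\m$ of \eqref{E:zeroFreeBFormula} and reduce the whole lemma to size and convergence properties of the integrals $\int_\T\frac{d\mu(\zeta)}{|1-\conj{\zeta}z|^{j+1}}$. Differentiating \eqref{E:zeroFreeBFormula} under the integral sign and using $\frac{\zeta+z}{\zeta-z}=-1+\frac{2\zeta}{\zeta-z}$ gives, for $k\ge1$, $(\log b)^{(k)}(z)=-2\,k!\int_\T\frac{\zeta\,d\mu(\zeta)}{(\zeta-z)^{k+1}}$, whence $|(\log b)^{(k)}(z)|\le 2\,k!\int_\T\frac{d\mu(\zeta)}{|1-\conj{\zeta}z|^{k+1}}$ (as $|\zeta-z|=|1-\conj{\zeta}z|$ on $\T$). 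Combining this with the identity $\frac{d^k}{dz^k}e^{g}=e^{g}\,Q_k(g',\dots,g^{(k)})$, applied to $g=\log b$, where $Q_k$ is a fixed polynomial with non-negative coefficients (a Bell polynomial), and with $|b|\le1$ on $\D$, one sees: (a) uniform boundedness in $\Omega$ of $\int_\T\frac{d\mu}{|1-\conj{\zeta}z|^{j+1}}$ for $1\le j\le k$ implies uniform boundedness of $b^{(k)}$ in $\Omega$; and (b) existence of $\lim_{z\to1,z\in\Omega}b(z)$ and of $\lim_{z\to1,z\in\Omega}(\log b)^{(j)}(z)$ for $1\le j\le k$ implies existence of $\lim_{z\to1,z\in\Omega}b^{(k)}(z)$.

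\textbf{Second bullet, boundedness and $|b|\to1$.} Since $|1-\conj{\zeta}z|\le2$, one has $|1-\conj{\zeta}z|^{-(j+1)}\le 2^{2m+1-j}|1-\conj{\zeta}z|^{-(2m+2)}$ for $0\le j\le 2m+1$, so the hypothesis \eqref{E:PartIIIforZeroFree} immediately yields $\sup_{z\in\Omega}\int_\T\frac{d\mu}{|1-\conj{\zeta}z|^{j+1}}<\infty$ for all $1\le j\le 2m+1$; by (a), $b',\dots,b^{(2m+1)}$ are uniformly bounded in $\Omega$. Taking $j=1$ and invoking \eqref{E:LogExprZeroFree}, the quantity $\frac{-\log|b(z)|^2}{2(1-|z|^2)}$ is bounded on $\Omega$, so $-\log|b(z)|^2=O(1-|z|^2)$ and therefore $|b(z)|\to1$ as $z\to1$ in $\Omega$.

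\textbf{Second bullet, existence of limits.} By (b) it is enough to exhibit the limits of $\int_\T\frac{\zeta\,d\mu(\zeta)}{(\zeta-z)^{j+1}}$ for $1\le j\le 2m$ and of $\log b(z)=-\int_\T\frac{\zeta+z}{\zeta-z}\,d\mu$. Fatou's lemma applied to \eqref{E:PartIIIforZeroFree} gives $\int_\T\frac{d\mu}{|1-\zeta|^{2m+2}}<\infty$; in particular $\mu(\{1\})=0$ and $|1-\zeta|^{-(j+1)}\in L^1(\mu)$ for $j\le2m+1$. I would split each integral over the arc $E_z$ of \eqref{E:EzDef} and its complement. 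On $\T\setminus E_z$ the estimate \eqref{E:EzEstimate} (applied with $w=\zeta$; note $\zeta\notin S_z$ automatically) provides the $z$-independent dominant $4^{j+1}|1-\zeta|^{-(j+1)}\in L^1(\mu)$ while the integrand converges pointwise $\mu$-a.e.\ to $\zeta(\zeta-1)^{-(j+1)}$, so dominated convergence applies. On $E_z$, comparing $\zeta$ with the radial projection $z/|z|$ of $z$ shows $|1-\conj{\zeta}z|=|\zeta-z|\le|\arg z|+(1-|z|)=:\delta(z)$, so that $|1-\conj{\zeta}z|^{-(j+1)}=|1-\conj{\zeta}z|^{2m+1-j}|1-\conj{\zeta}z|^{-(2m+2)}\le\delta(z)^{2m+1-j}|1-\conj{\zeta}z|^{-(2m+2)}$, and hence $\int_{E_z}\frac{d\mu}{|1-\conj{\zeta}z|^{j+1}}\le\delta(z)^{2m+1-j}\sup_{w\in\Omega}\int_\T\frac{d\mu}{|1-\conj{\zeta}w|^{2m+2}}\to0$ as $z\to1$, since $\delta(z)\to0$ and $2m+1-j\ge1$ for $j\le 2m$. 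The identical splitting handles $\log b(z)$ (using $|1-\zeta|^{-1}\in L^1(\mu)$ on $\T\setminus E_z$ and $\delta(z)^{2m+1}\to0$ on $E_z$), hence $b(z)=\exp(\log b(z))$; feeding these limits into (b) gives the limits of $b^{(k)}(z)$ for $k=0,\dots,2m$.

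\textbf{First bullet and the main obstacle.} Under condition $(i)$ of \thref{T:maintheorem1} the reproducing formula \eqref{E:DerivativReproducingFormula} and $b\in\hb$ give $|b^{(m)}(z)|=|\ip{b}{\conj{\partial_z}^m k^b_z}_b|\le\|b\|_b\sup_{z\in\Omega}\|\conj{\partial_z}^m k^b_z\|_b<\infty$, so $b^{(m)}$ is bounded in $\Omega$; a routine argument (integrate a bounded higher derivative of an analytic function along the segments $[z,1)$, which lie in $\Omega$ near $1$ by the star-shapedness assumption, and use that $\overline{\Omega}$ meets $\T$ only at $1$) then propagates boundedness down to $b',\dots,b^{(m-1)}$, and, applied to an arbitrary $f\in\hb$ (whose $m$-th derivative is bounded in $\Omega$ by $(i)$) together with the principle of uniform boundedness, also gives $\sup_{z\in\Omega}\|k^b_z\|_b<\infty$; via $\|k^b_z\|_b^2=\frac{1-|b(z)|^2}{1-|z|^2}$ the latter forces $1-|b(z)|^2=O(1-|z|^2)$, i.e.\ $|b(z)|\to1$. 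I expect the one genuinely delicate point to be the $E_z$-estimate in the previous paragraph: it is precisely the gap between the exponent $j+1\le 2m+1$ relevant to a derivative of order $\le 2m$ and the critical exponent $2m+2$ controlled by the hypothesis that converts the merely \emph{bounded} quantity $\int_{E_z}|1-\conj{\zeta}z|^{-(2m+2)}\,d\mu$ into the \emph{vanishing} factor $\delta(z)^{2m+1-j}$, which is also why the limits are asserted only up to order $2m$ and not $2m+1$.
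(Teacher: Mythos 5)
Your treatment of the first bullet contains a genuine gap: the inequality $|b^{(m)}(z)|=|\ip{b}{\conj{\partial_z}^m k^b_z}_b|\le\|b\|_b\sup_{z\in\Omega}\|\conj{\partial_z}^m k^b_z\|_b$ presupposes that $b\in\hb$, and this is false in general. Indeed, $b$ belongs to $\hb$ only when $b$ is a non-extreme point of the unit ball of $H^\infty$; if $b$ is inner, then $\hb$ is the model space $H^2\ominus bH^2$, and $\ip{b}{f}=\ip{f}{b\cdot 1}^- =0$ for every $f$ in that space, so $b\perp\hb$ and $b\notin\hb$. Singular inner functions are zero-free, so this failure occurs precisely in a central subcase of the lemma. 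The paper avoids the problem by testing the functional $\conj{\partial_z}^m k^b_z$ not against $b$ but against a fixed kernel $k^b_\lambda$ with $b(\lambda)\neq 0$ (always a member of $\hb$, with $\|k^b_\lambda\|_b$ controlled): expanding $\partial_z^m k^b_\lambda(z)$ by Leibniz isolates the term $-\conj{b(\lambda)}b^{(m)}(z)/(1-\conj{\lambda}z)$ up to a remainder involving only lower-order derivatives of $b$, which are bounded by induction on $m$; since $b(\lambda)\neq0$ and $|1-\conj{\lambda}z|$ is bounded above and below, this yields the boundedness of $b^{(m)}$ in $\Omega$. Note that this induction runs \emph{upward} (it uses that condition $(i)$ for $m$ implies $(i)$ for every $i\le m$, via the uniform boundedness principle), so each order is obtained directly; your \emph{downward} propagation by integrating $b^{(m)}$ along the segments $[z,1)$ is also more delicate than you suggest, since those segments lead toward the boundary point $1$ rather than to a fixed compact anchor in $\Omega$ where $b^{(m-1)}$ is a priori controlled.

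The second bullet is correct. Your derivation of the boundedness of $b^{(k)}$ for $k\le 2m+1$ from $(iii)$ (via $(\log b)^{(k)}(z)=-2k!\int_\T\zeta(\zeta-z)^{-(k+1)}d\mu(\zeta)$ and the exponential of $\log b$) is the same computation the paper performs by multiplying \eqref{E:LogarithmicDerivativeZeroFree} by $b$ and differentiating. For the existence of the limits of $b^{(k)}$, $k\le 2m$, you take a genuinely different route: the paper argues that boundedness of $b^{(2m+1)}$ makes $b^{(k)}$ Lipschitz on $\Omega$ for $k\le 2m$, hence continuously extendable to $1$, whereas you compute the limits of the Cauchy-type integrals directly by splitting over $E_z$ and its complement, dominating on $\T\setminus E_z$ via \eqref{E:EzEstimate} and killing the $E_z$ contribution with the factor $\delta(z)^{2m+1-j}$ gained from the gap between the exponent $j+1\le 2m+1$ and the critical exponent $2m+2$. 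This is essentially the mechanism of \thref{L:TheoremBLemma3} imported into the present lemma; it is slightly longer but more explicit (it identifies the limits as the boundary integrals) and it makes transparent why the conclusion stops at order $2m$, consistent with \thref{T:maintheorem3}. Both arguments are sound; only the first bullet needs repair along the lines indicated above.
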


As in the case of the corresponding lemma for Blaschke products, we leave the proof for later. Note that
\begin{equation}
    \label{E:laplacianKernelCauchySchwarz}
    |\partial^i_z \conj{\partial_z}^j \|k^b_z\|_b^2| = |\ip{\conj{\partial_z}^i k^b_z}{\conj{\partial_z}^j k^b_z}_b| \leq \|\conj{\partial_z}^i k^b_z\|_b \cdot \|\conj{\partial_z}^j k^b_z\|_b.
\end{equation} As before, the condition $(i)$ in \thref{T:maintheorem1} implies that $\sup_{z \in \Omega} \|\conj{\partial_z}^i k^b_z\|_b < \infty$ for $i \leq m$. Then \eqref{E:laplacianKernelCauchySchwarz} together with \thref{L:bZeroFreeSupBoundLemma} show that the left-hand side in \eqref{E:muSKbExpressionLEibniz} is bounded for $z \in \Omega$, and we have proved the implication $(i) \Rightarrow (iii)$ in \thref{T:maintheorem1} for zero-free $b$. 
Conversely, let us prove the implication $(iii) \Rightarrow (i)$ by induction on $m$. The base case $m=0$ is obvious from \eqref{E:muSKbExpression}, the second statement in \thref{L:bZeroFreeSupBoundLemma} and the boundedness from above and below of $S$ for $x \in (1/2, 1)$. Assuming that the implication $(iii) \Rightarrow (i)$ in \thref{T:maintheorem1} is valid for $m-1$, we see in \eqref{E:muSKbExpressionLEibniz} that
\[ \int_\T \frac{(m!)^2d\mu(\zeta)}{|1-\conj{\zeta}z|^{2m+2}} =  S(|b(z)|^2) \cdot \|\conj{\partial_z}^m k^b_z\|_b^2 + D(z) \cdot \|\conj{\partial_z}^m k^b_z\|_b + \mathcal{O}(1)\] where $D(z)$ is bounded for $z \in \Omega$. As the left-hand side is assumed bounded in $\Omega$, so must be the right-hand side. Since $S(|b(z)|^2)$ is bounded from below in $\Omega$, we deduce that $\sup_{z \in \Omega} \|\conj{\partial_z}^m k^b_z\|_b < \infty$.

The proof that the conditions $(i)$ and $(iii)$ in \thref{T:maintheorem1} are equivalent to $(ii)$ in the case of zero-free $b$ is analogous to the corresponding proof for the case of the Blaschke product. The only difference is that we use the second part of \thref{L:bZeroFreeSupBoundLemma} instead of the second part of \thref{L:bjSupBoundLemma}. We leave out the details.

\subsection{Combining the special cases into the general result.}
\label{S:T2ProofCombiningSubsec}

In the last two sections we have proved \thref{T:maintheorem1} in the two cases that $b = B_1$ is a Blaschke product or $b = B_2$ is zero-free in $\D$. Let us now show that these two cases imply the result for the general symbol $b = B_1B_2$.

Assume that $(i)$ in \thref{T:maintheorem1} holds for $b = B_1B_2$, where $B_1$ is a Blaschke product and $B_2$ is zero-free in $\D$. It is well known that we have the contractive containments $\h(B_1) \subset \hb$, $\h(B_2) \subset \hb$, and that $f \in \hb$ can be decomposed as
\begin{equation}
    \label{E:HbFuncDecomp}
    f = B_2f_1 + f_2, \quad f_1 \in \h(B_1), \, f_2 \in \h(B_2)
\end{equation} (see, for instance, \cite[page 5]{sarasonbook} or \cite[Theorem 16.23]{hbspaces2fricainmashreghi}). 
The contractive containment $\h(B_1) \subset \hb$ implies the set containment
\[ \{ f \in \h(B_1) : \|f\|_{B_1} = 1\} \subset \{ f \in \hb : \|f\|_b \leq 1 \} \] and therefore
\begin{align} \label{E:FunctionalNormSubspaceEstimate}
    \|\conj{\partial_z}^m k^{B_i}_z\|_{B_i} &= \sup_{\substack{f \in \h(B_1)\\ \|f\|_{B_1} = 1}} |f^{(m)}(z)| \\
    &\leq \sup_{\substack{f \in \h(b) \\ \|f\|_{b} \leq 1}} |f^{(m)}(z)| \nonumber \\
    &= \| \conj{\partial_z}^m k^b_z\|_b. \nonumber
\end{align} 
Hence by the special cases of \thref{T:maintheorem1}, $f_i^{(k)}(z)$ has a limit as $z \to 1$ in $\Omega$ for $k \leq m$, $i=1,2$. The same is true for the zero-free function $B_2$, according to \thref{L:bZeroFreeSupBoundLemma}. By \eqref{E:HbFuncDecomp}, we see that $f^{(m)}(z)$ has a limit as $z \to 1$ in $\Omega$. The proof in the general case of the implication $(i) \Rightarrow (ii)$ in \thref{T:maintheorem1} is thus complete.

We prove $(ii) \Rightarrow (iii)$. The containments $\h(B_1) \subset \hb$, $\h(B_2) \subset \hb$ show that $(ii)$ in \thref{T:maintheorem1} holds for the symbols $B_1$ and $B_2$, and therefore so does $(iii)$, which means that \eqref{E:PartIIIforBlaschke} and \eqref{E:PartIIIforZeroFree} are satisfied. Summing the conditions, we obtain $(iii)$ in \thref{T:maintheorem1} for the symbol $b = B_1B_2$. 

Finally, let us prove the remaining implication $(iii) \Rightarrow (i)$ in the general case. The special cases imply that
\[ \sup_{z \in \Omega} \| \conj{\partial_z}^l k^{B_i}_z\|_{B_i} < \infty, i =1,2, \quad l = 0, 1, \ldots, m.\] It suffices to apply the Leibniz formula, the inequality \eqref{E:laplacianKernelCauchySchwarz} and either \thref{L:bjSupBoundLemma} or \thref{L:bZeroFreeSupBoundLemma} to the decomposition in \eqref{E:KernelDecompFormula} to conclude that $\sup_{z \in \Omega} \| \conj{\partial_z}^m k^{b}_z\|_{b} < \infty$. The proof is complete.

\subsection{Proofs of the auxiliary lemmas.}

\begin{proof}[Proof of \thref{L:bjSupBoundLemma}]
We start by assuming the condition $(i)$ in \thref{T:maintheorem1}. We use induction on $m$ to prove that the $b_j^{(m)}$ are uniformly bounded in $\Omega$, with the base case $m=0$ being trivial. Let $m > 0$ and assume that $\sup_{z \in \Omega} \| \conj{\partial_z}^m k^b_z\|_b < \infty$. As before, we conclude that $\sup_{z \in \Omega} \| \conj{\partial_z}^i k^b_z\|_b < \infty$ for $i \leq m$, and by the induction hypothesis derivatives of $b_j$ up to order $m-1$ are uniformly bounded in $\Omega$. Fix $\lambda \in \D$ for which $b(\lambda) \neq 0$. The term $\frac{1}{1-\conj{\lambda}z}$ and all of its derivatives are bounded in $\D$, hence in $\Omega$. Since $b$ and $b_j$ are inner functions, the space $\h(b_j)$ lies isometrically embedded in $\hb$. Therefore by \eqref{E:HbNormEq}, we have
\[ \|k^{b_j}_\lambda\|_b^2 = \|k^{b_j}_\lambda\|_{b_j}^2 = \frac{1-|b_j(\lambda)|^2}{1-|\lambda|^2}\leq \frac{1-|b(\lambda)|^2}{1-|\lambda|^2} \leq \|k^b_\lambda\|_b^2\] and so
\[ |\partial^m_z k^{b_j}_\lambda(z)| = |\ip{k^{b_j}_\lambda}{\conj{\partial_z}^m k^b_z}_b| \leq \| k^{b_j}_\lambda\|_b \|\conj{\partial_z}^m k^b_z\|_b \leq \| k^{b}_\lambda\|_b \|\conj{\partial_z}^m k^b_z\|_b.\] The rightmost quantity is uniformly bounded for $z \in \Omega$. Leibniz rule applied to the expression for $k^b_\lambda$ in \eqref{E:HbKernelEq} shows that
\begin{equation}
    \label{E:partialMkernelEqRest}
    \partial^m_z k^{b_j}_\lambda(z) = \frac{-\conj{b_j(\lambda)}b_j^{(m)}(z)}{1-\conj{\lambda}z} + r_j(z)
\end{equation} where $r_j(z)$ is a linear combination of products of derivatives of $b_j$ of order strictly less than $m$ and powers of $(1-\conj{\lambda}z)^{-1}$. By induction we have that $r_j$ is uniformly bounded in $\Omega$. Since $|b_j(\lambda)| \geq |b(\lambda)| > 0$, it follows from \eqref{E:partialMkernelEqRest} that $b_j^{(m)}$ is also uniformly bounded in $\Omega$. The fact that condition $(i)$ in \thref{T:maintheorem1} implies $\lim_{z \to 1, z \in \Omega} |b(z)| = 1$ follows at once from $\sup_{z \in \Omega} \|k^b_z\|_b < \infty$ and \eqref{E:HbNormEq}. We have proved the first point in \thref{L:bjSupBoundLemma}.

Let us now prove the second point, so instead assume $(iii)$ in \thref{T:maintheorem1}. Using \eqref{E:MagicFormula} and our hypothesis, we note that for $z \in \Omega$ we have
\begin{equation}
    \label{E:MagicFormulaEstimate}
    \sup_n \, 1 - \frac{|z-a_n|^2}{|1-\conj{a_n}z|^2} = \sup_n \frac{(1-|z|^2)(1-|a_n|^2)}{|1-\conj{a_n}z|^2} \leq \sum_j \frac{(1-|z|^2)(1-|a_j|^2)}{|1-\conj{a_j}z|^2} = \mathcal{O}(1-|z|^2). 
\end{equation} We conclude that there exists $r \in (0,1)$ such that for $z \in \Omega$ with $|z| > r$, we have the lower bound
\begin{equation}
    \label{E:ZerosFarAwayBound}
    1/2 \leq \frac{|z-a_n|}{|1-\conj{a_n}z|}.
\end{equation} Differentiating the expression
\[ \log |b_j(z)| = \frac{1}{2}\sum_{n=1}^j \log \bigg( \frac{|z-a_n|^2}{|1-\conj{a_n}z|^2}\bigg)\] results in
\begin{equation}
    \label{E:BlaschkeLogarithmicDerivative}
    \frac{b_j'(z)}{b_j(z)} = \partial_z \log |b_j(z)|^2 = \sum_{n=1}^j \frac{1-|a_n|^2}{(z-a_n)(1-\conj{a_n}z)}.
\end{equation}  By \eqref{E:ZerosFarAwayBound} we obtain that
\[ |b_j'(z)| \leq 2\sum_{n \geq 1} \frac{1-|a_n|^2}{|1-\conj{a_n}z|^2}, z \in \Omega, \quad |z| > r.\] For $z$ satisfying $|z| \leq r$, we may use the usual Bloch-type estimate \[ |f'(z)| \leq \frac{2 \sup_{z \in \D} |f(z)|}{1-|z|}\] to conclude the uniform boundedness of $b_j'$ in $\Omega$. The case $m=0$ of our proof is complete.
To prove the cases $m \geq 1$, simply multiply \eqref{E:BlaschkeLogarithmicDerivative} through by $b_j(z)$ and differentiate $k$ times to see that $b_j^{(k+1)}(z)$ is a finite sum of products of derivatives of $b_j$ of order lower than $k$, and of terms controlled by $\sum_{n \geq 1} \frac{1-|a_n|^2}{|1-\conj{a_n}z|^{k+2}}$ in $\Omega$. Then the general case follows readily by induction. Since $b_j \to b$ uniformly on compacts as $j \to \infty$, these estimates show also that $b^{(k)}$ is bounded in $\Omega$ for $k \leq 2m+1$. This means that $b_j^{(k)}$ and $b^{(k)}$ are Lipschitz in $\Omega$ for $k \leq 2m$, and so they extend to continuous functions on $\partial \Omega$. In particular $\lim_{z \to 1, z\in \Omega} b^{(k)}(z)$ and $\lim_{z \to 1, z\in \Omega} b_j^{(k)}(z)$ exist for $k \leq 2m$. The fact that $\lim_{z \to 1, z \in \Omega} |b(z)| = 1$ is yet again a byproduct of the proof. Indeed,
\[ \log |b(z)| = \frac{1}{2}\sum_{n \geq 1} \log \bigg( \frac{|z-a_n|^2}{|1-\conj{a_n}z|^2}\bigg),\] and from the estimate $(x-1)/2 \leq \log x \leq x-1$, $x \in (1/2, 1)$ and \eqref{E:MagicFormula}, we deduce that
\[ \log |b(z)| = \mathcal{O}(1-|z|^2), \quad z \in \Omega.\] Hence $|b(z)| \to 1$ as $z \to 1$ in $\Omega$. 
\end{proof}

\begin{proof}[Proof of \thref{L:bZeroFreeSupBoundLemma}] The proof of the first point is completely analogous to the corresponding part of the proof of \thref{L:bjSupBoundLemma}, with $b=b_j$. To prove the second point, we use \eqref{E:zeroFreeBFormula} to see that
\begin{equation}
    \label{E:LogarithmicDerivativeZeroFree}
    \frac{b'(z)}{b(z)} = -\int_{\T} \frac{2 \zeta d\mu(\zeta)}{(\zeta-z)^2}.
\end{equation} Multiplying through by $b(z)$ and differentiating $2m$ times shows that \eqref{E:PartIIIforZeroFree} (i.e., $(iii)$ of \thref{T:maintheorem1}) implies boundedness of $b^{(2m+1)}$ in $\Omega$. As in the proof of \thref{L:bjSupBoundLemma}, it follows that $\lim_{z \to 1, z\in \Omega} b^{(k)}(z)$ exists for $k \leq 2m$. Finally, we have
\[ -\log |b(z)| = \int_\T \frac{1-|z|^2}{|1-\conj{\zeta}z|^2} d\mu(\zeta)\] and so \eqref{E:PartIIIforZeroFree} shows that $|b(z)| \to 1$ as $z \to 1$ in $\Omega$.
\end{proof}

We state the following result which follows from Lemmas \ref{L:bjSupBoundLemma} and \ref{L:bZeroFreeSupBoundLemma}.
\begin{cor}
    Assume that the equivalent conditions in \thref{T:maintheorem1} hold for some $m$. Then the derivatives up to order $2m+1$ of $b$ are uniformly bounded in $\Omega$, and $\lim_{z \to 1, z \in \Omega} |b(z)| = 1$. Moreover, the limits \[\lim_{\substack{z \to 1\\ z \in \Omega}} b^{(k)}(z), \quad k = 0, 1, \ldots, 2m\] exist.
\end{cor}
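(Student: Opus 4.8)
The plan is to reduce to the two special cases already settled in Lemmas~\ref{L:bjSupBoundLemma} and~\ref{L:bZeroFreeSupBoundLemma} by passing through the Nevanlinna factorization $b = B_1 B_2$, where $B_1$ is the Blaschke product with zero sequence $(a_n)_n$ and $B_2 = S_\nu b_o$ is zero-free in $\D$. The first step is to observe that condition $(iii)$ of \thref{T:maintheorem1} for $b$ descends to both factors: the three non-negative summands comprising $(iii)$ split cleanly, the Blaschke sum being exactly condition $(iii)$ for $B_1$, and the two integrals over $\T$ (against $d\nu$ and against $-\log|b|\,d\m$, noting that $\log|b| = \log|b_o|$ on $\T$) together constituting condition $(iii)$ for the zero-free symbol $B_2$. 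Alternatively, one could argue as in Section~\ref{S:T2ProofCombiningSubsec}, using the contractive containments $\h(B_i) \subset \hb$ and the principle of uniform boundedness to see that the equivalent conditions of \thref{T:maintheorem1} hold for each $B_i$ with the same $m$.

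With that in hand, I would apply the second bullet of \thref{L:bjSupBoundLemma} to $B_1$ and the second bullet of \thref{L:bZeroFreeSupBoundLemma} to $B_2$, obtaining in each case: uniform boundedness in $\Omega$ of all derivatives up to order $2m+1$; the limit relation $|B_i(z)| \to 1$ as $z \to 1$ in $\Omega$; and the existence of $\lim_{z \to 1,\, z\in\Omega} B_i^{(k)}(z)$ for $k = 0, 1, \ldots, 2m$.

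The last step is to transfer these three properties to $b = B_1 B_2$ via the Leibniz rule $b^{(k)} = \sum_{j=0}^k \binom{k}{j} B_1^{(j)} B_2^{(k-j)}$. For $k \le 2m+1$ every factor on the right is uniformly bounded in $\Omega$, so $b^{(k)}$ is too. Since $|b(z)| = |B_1(z)|\,|B_2(z)|$, the two modulus limits combine to give $|b(z)| \to 1$ as $z \to 1$ in $\Omega$. And for $k \le 2m$, each summand $B_1^{(j)}(z) B_2^{(k-j)}(z)$ is a product of two functions each having a finite limit as $z \to 1$ in $\Omega$ (here $0 \le j \le k \le 2m$, so both indices lie in the admissible range of the lemmas), whence $b^{(k)}(z)$ has a limit along $\Omega$ for $k = 0, 1, \ldots, 2m$.

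Since the corollary is a straightforward assembly of results already established, I do not anticipate any genuine obstacle. The only points deserving care are confirming that condition $(iii)$ really does pass to each of the two factors --- immediate from the non-negativity of its three terms --- and respecting the one-order drop (boundedness is available up to order $2m+1$, but the existence of boundary limits only up to order $2m$) when invoking the Leibniz expansion.
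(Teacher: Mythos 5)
Your proposal is correct and follows exactly the route the paper intends: the paper states the corollary as an immediate consequence of Lemmas \ref{L:bjSupBoundLemma} and \ref{L:bZeroFreeSupBoundLemma}, and the assembly you spell out (condition $(iii)$ descends to each factor of $b = B_1B_2$ by non-negativity of the three terms, then Leibniz transfers boundedness up to order $2m+1$ and existence of limits up to order $2m$) is the implicit argument. Your attention to the one-order drop between boundedness and existence of limits is exactly the right point of care.
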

We will see in Theorem \ref{T:maintheorem3} that in a general region $\Omega$, $\lim_{z \to 1, z \in \Omega} b^{(2m+1)}(z)$ may not exist.

\section{Proof of Theorem B}
\label{S:ProofMT2Sec}

In this section we prove \thref{T:maintheorem2}, and so throughout we assume that the equivalent conditions of \thref{T:maintheorem1} are satisfied for some fixed integer $m$. Consequently, we often will use in our arguments the various partial results derived in the previous section (such as \thref{L:bjSupBoundLemma} and \thref{L:bZeroFreeSupBoundLemma}).

\subsection{Some initial observations.}

The following is our main simplifying observation. 

\begin{lem} \thlabel{L:TheoremBLemma1}
    Assume that $b$ satisfies the equivalent conditions stated in \thref{T:maintheorem1} for some $m \geq 0$. To conclude the norm convergence $\conj{\partial_z}^m k^b_{z} \to \conj{\partial_z}^m k^b_1$ as $z \to 1$ in $\Omega$, it suffices to show that 
    \begin{equation}
        \label{E:OmegaNormLimitKernel}
        \lim_{\substack{z \to 1 \\ z \in \Omega}} \|\conj{\partial_z}^m k^b_z\|_b
    \end{equation} exists.
\end{lem}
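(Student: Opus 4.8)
The plan is to reduce the statement to the elementary Hilbert-space principle that a bounded net which converges weakly and whose norms converge must converge in norm; the weak-convergence half is already in hand from the proof of \thref{T:maintheorem1}, so the only genuine content is control of the norms. Write $h_z := \conj{\partial_z}^m k^b_z$ and $h_1 := \conj{\partial_z}^m k^b_1$. First I would recall that, under the equivalent conditions of \thref{T:maintheorem1}, the net $(h_z)$ converges weakly to $h_1$ as $z \to 1$ in $\Omega$: condition $(i)$ gives $\sup_{z \in \Omega}\|h_z\|_b < \infty$, condition $(ii)$ says that $\ip{f}{h_z}_b = f^{(m)}(z)$ converges for every $f \in \hb$, and — exactly as observed at the end of the proof of \thref{T:maintheorem1} — these two facts together yield weak convergence of $(h_z)$; evaluating the weak limit against reproducing kernels $k^b_w$ and using the convergence of $b^{(k)}(z)$ along $\Omega$ for $k \le m$ (from the corollary in Section~\ref{S:ProofMT1Sec}) identifies it with the pointwise-in-$\D$ limit of $h_z$, which is $h_1$.

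Next I would use the identity $\|h_z - h_1\|_b^2 = \|h_z\|_b^2 - 2\Re\ip{h_z}{h_1}_b + \|h_1\|_b^2$. By the weak convergence just recalled, $\ip{h_z}{h_1}_b \to \|h_1\|_b^2$, so if $L := \lim_{z \to 1,\, z \in \Omega}\|h_z\|_b$ exists, as hypothesized, then $\|h_z - h_1\|_b^2 \to L^2 - \|h_1\|_b^2$ as $z \to 1$ in $\Omega$. Hence the desired norm convergence $h_z \to h_1$ is equivalent to the equality $L = \|h_1\|_b$. One inequality, $\|h_1\|_b \le L$, is automatic from weak lower semicontinuity of the norm, so the whole matter comes down to the reverse inequality $L \le \|h_1\|_b$.

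To obtain $L \le \|h_1\|_b$ I would exhibit a curve inside $\Omega$ approaching $1$ along which norm convergence $h_z \to h_1$ is already available. Since $\Omega$ is open, $1 \in \partial\Omega$, and $\Omega$ is star-shaped with respect to $1$ near $1$, there is a point $z_0 \in \Omega$ with $[z_0,1) \subset \Omega$; being a chord of $\D$ terminating at $1$, this segment lies in some non-tangential region $\Gamma(c)$. The hypotheses of \thref{T:maintheorem1} imply, by Fatou's lemma, the summability–integrability condition $(iii)$ of \thref{T:FricainMashreghiTheorem}; invoking that theorem in its non-tangential form, one gets norm convergence $h_z \to h_1$ as $z \to 1$ within $\Gamma(c)$, hence along $[z_0,1)$, hence $\|h_z\|_b \to \|h_1\|_b$ along $[z_0,1)$. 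Since $[z_0,1) \subset \Omega$ and the full limit $L$ of $\|h_z\|_b$ over $\Omega$ exists, it must equal $\|h_1\|_b$, and the proof is complete.

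I expect the delicate point to be exactly this last step, the equality $L = \|h_1\|_b$: the argument rests on having norm convergence of $h_z$ to $h_1$ already established along at least one approach to $1$ inside $\Omega$, and the only approaches one is guaranteed $\Omega$ contains are (pieces of) non-tangential ones. So one must check both that the hypotheses of \thref{T:maintheorem1} really do feed \thref{T:FricainMashreghiTheorem} (immediate from Fatou) and that the non-tangential — not merely radial — strengthening of \thref{T:FricainMashreghiTheorem} is at our disposal. Should one wish to avoid that strengthening, an alternative is to show directly, from the Leibniz-rule expansions of $\|h_z\|_b^2$ developed in Section~\ref{S:ProofMT1Sec}, that $\|h_z\|_b^2$ and $\ip{h_z}{h_1}_b$ share the same limit along $\Omega$, which again forces $\|h_z - h_1\|_b^2 \to 0$; this route is more computational but self-contained.
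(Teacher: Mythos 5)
Your proposal is correct and follows essentially the same route as the paper: expand $\|h_z-h_1\|_b^2$ via the inner product, use the weak convergence $h_z\to h_1$ supplied by conditions $(i)$ and $(ii)$ of \thref{T:maintheorem1} to handle the cross term, and identify the hypothesized limit $L$ of the norms with $\|h_1\|_b$ by comparing against the norm convergence guaranteed by \thref{T:FricainMashreghiTheorem}. Your extra care in routing the comparison through a segment $[z_0,1)\subset\Omega$ (hence through the non-tangential form of that theorem, since $\Omega$ need not contain the radius) is if anything slightly more precise than the paper's direct appeal to the radial statement.
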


For convenience we will use in the following proof the Fricain-Mashreghi result presented in the introduction (\thref{T:FricainMashreghiTheorem}), although this is not, strictly speaking, necessary.

\begin{proof}[Proof of \thref{L:TheoremBLemma1}]
Suppose that the limit in \eqref{E:OmegaNormLimitKernel} exists. Note that part $(iii)$ in \thref{T:FricainMashreghiTheorem} is satisfied as a consequence of Fatou's lemma and our assumption of $(iii)$ of \thref{T:maintheorem1} being satisfied. Hence all three conditions in \thref{T:FricainMashreghiTheorem} hold. By part $(i)$ of \thref{T:FricainMashreghiTheorem}, we conclude that \[ \lim_{r \to 1-} \|\conj{\partial_z}^m k^b_r\|_b = \|\conj{\partial_z}^m k^b_1\|_b.\] Hence the limit in \eqref{E:OmegaNormLimitKernel} must also equal $\|\conj{\partial_z}^m k^b_1\|_b$. Noting that
\[ \lim_{\substack{z \to 1 \\ z \in \Omega}}  \partial_z^m \conj{\partial_z}^m k^b_1(z) = \|\conj{\partial_z}^m k^b_1\|^2_b\] we conclude the proof as follows: 
\[  \lim_{\substack{z \to 1 \\ z \in \Omega}} \|\conj{\partial_z}^m k^b_z - \conj{\partial_z}^m k^b_1\|_b =         \lim_{\substack{z \to 1 \\ z \in \Omega}} \|\conj{\partial_z}^m k^b_z\|_b^2 + \|\conj{\partial_z}^m k^b_1\|_b - 2\Re \partial_z^m \conj{\partial_z}^m k^b_1(z) = 0.\] 
\end{proof}

The following two lemmas will also be useful.

\begin{lem} \thlabel{L:TheoremBLemma2}
    If we have norm convergence $\conj{\partial_z}^m k^b_{z} \to \conj{\partial_z}^m k^b_1$ as $z \to 1$ in $\Omega$, then also we have norm convergence
    $\conj{\partial_z}^j k^b_{z} \to \conj{\partial_z}^j k^b_1$ as $z \to 1$ in $\Omega$ for $j < m$.
\end{lem}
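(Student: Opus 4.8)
\textbf{Proof plan for \thref{L:TheoremBLemma2}.} The plan is to use the derivative reproducing formula \eqref{E:DerivativReproducingFormula} together with the weak-limit definition of the boundary kernels to convert the hypothesized norm convergence into control of all lower-order kernels. First I would observe that the assumed convergence $\conj{\partial_z}^m k^b_z \to \conj{\partial_z}^m k^b_1$ in $\hb$ implies in particular that $\sup_{z \in \Omega} \|\conj{\partial_z}^m k^b_z\|_b < \infty$, so by the principle of uniform boundedness applied via \eqref{E:DerivativReproducingFormula} (exactly as in the proof of \thref{T:maintheorem1}) we get $\sup_{z \in \Omega} \|\conj{\partial_z}^j k^b_z\|_b < \infty$ for every $j \leq m$; hence each family $(\conj{\partial_z}^j k^b_z)_{z \in \Omega}$ is relatively weakly compact in $\hb$.

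Next, I would pin down the weak limit of $\conj{\partial_z}^j k^b_z$ as $z \to 1$ in $\Omega$. Fix $j < m$. For any $f$ in a suitable dense subclass of $\hb$ — finite linear combinations of kernel functions $k^b_\lambda$ and their $\conj{\partial}$-derivatives — the function $f^{(j)}$ extends continuously up to $1$ along $\Omega$; this follows from the fact that the derivatives of $b$ up to order $2m$ have limits along $\Omega$ (the Corollary following \thref{L:bZeroFreeSupBoundLemma}, combined with \thref{L:bjSupBoundLemma} and \thref{L:bZeroFreeSupBoundLemma} applied to the factors, via the decomposition \eqref{E:HbFuncDecomp} and Leibniz's rule applied to \eqref{E:HbKernelEq}). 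So $\ip{f}{\conj{\partial_z}^j k^b_z}_b = f^{(j)}(z) \to f^{(j)}(1) = \ip{f}{\conj{\partial_z}^j k^b_1}_b$ for such $f$, where the last equality is the defining property of the boundary derivative kernel. Since $\sup_{z\in\Omega}\|\conj{\partial_z}^j k^b_z\|_b < \infty$, a standard density argument upgrades this to weak convergence $\conj{\partial_z}^j k^b_z \to \conj{\partial_z}^j k^b_1$ in $\hb$ as $z \to 1$ in $\Omega$, for every $j \le m$ (this is essentially how condition $(ii)$ of \thref{T:maintheorem1} was established).

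Now the final step is the promotion from weak to norm convergence, and this is where the hypothesis for the \emph{top} order $m$ is essential. Since weak convergence plus convergence of norms implies norm convergence in a Hilbert space, and weak convergence is already in hand, it suffices to show $\|\conj{\partial_z}^j k^b_z\|_b \to \|\conj{\partial_z}^j k^b_1\|_b$ as $z \to 1$ in $\Omega$. I would obtain this from \eqref{E:HigherOrderKernelNorm}: $\|\conj{\partial_z}^j k^b_z\|_b^2 = \partial_z^j \conj{\partial_z}^j \big(\frac{1-|b(z)|^2}{1-|z|^2}\big)$. The point is that the norm hypothesis at order $m$, via \thref{T:maintheorem1}$(iii)$ and the estimate \eqref{E:EzEstimate}, forces the full (non-truncated) quantity $\sum_n \frac{1-|a_n|^2}{|1-\conj{a_n}z|^{2m+2}} + \int_\T \frac{d\mu(\zeta)}{|1-\conj{\zeta}z|^{2m+2}}$ to stay bounded on $\Omega$; combined with the norm-convergence hypothesis (which, by \thref{L:TheoremBLemma1} and \thref{T:FricainMashreghiTheorem}, means $\|\conj{\partial_z}^m k^b_z\|_b \to \|\conj{\partial_z}^m k^b_1\|_b$ and, under the $L^1(M)$ reformulation from Section~2.2, the uniform integrability / $L^1(M)$-compactness of the family $w \mapsto |1-\conj{w}z|^{-2m-2}$), one gets the $L^1(M)$-convergence of the lower-order kernels $w\mapsto |1-\conj{w}z|^{-2j-2}$ as well, since these are dominated by a fixed multiple of the top-order ones off the shrinking sets $S_z\cup E_z$ by \eqref{E:EzEstimate} and the mass near $1$ is uniformly small by uniform integrability. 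Unwinding \eqref{E:HbNormEq}–\eqref{E:HigherOrderKernelNorm}, the term-by-term differentiated expressions for $\|\conj{\partial_z}^j k^b_z\|_b^2$ (as in \eqref{E:BlaschkeDerivativeKernelNormFormula}–\eqref{E:CklDef} for the Blaschke part and \eqref{E:muSKbExpressionLEibniz} for the zero-free part) then converge, using also that the derivatives of $b$ and its factors converge along $\Omega$ up to order $2m \geq 2j$. The main obstacle I anticipate is making the domination-and-uniform-integrability argument airtight while tracking the cross-terms $C_{k,l}(z)$ with $k$ or $l$ strictly less than $j$: one must check that each such term converges, not merely stays bounded, and that is where the existence of the limits of $b^{(k)}$, $b_\ell^{(k)}$ for $k \le 2m$ (guaranteed by the Corollary) does the work, with Cauchy–Schwarz as in \eqref{E:CNklEstimate} reducing everything to the $L^1(M)$-convergence of the pure kernel sums.
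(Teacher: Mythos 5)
Your plan can be made to work, but it takes a genuinely different and much heavier route than the paper, whose entire proof is the following observation: for $z \in \Omega$ close to $1$ the segment $[1,z]$ lies in $\Omega$ (this is precisely why star-shapedness near $1$ was built into the definition of an approach region), so for $\|f\|_b = 1$,
\[ |f^{(j)}(z) - f^{(j)}(1)| = \Big| \int_1^z f^{(j+1)}(w)\, dw \Big| \leq |z-1| \cdot \sup_{w \in \Omega} \|\conj{\partial_z}^{j+1} k^b_w\|_b, \]
and taking the supremum over the unit ball of $\hb$ gives $\|\conj{\partial_z}^{j} k^b_z - \conj{\partial_z}^{j} k^b_1\|_b \leq |z-1| \cdot \sup_{w\in\Omega}\|\conj{\partial_z}^{j+1} k^b_w\|_b \to 0$; one then iterates down from $j+1=m$. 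This proves the stronger fact that mere \emph{boundedness} of the order-$(j+1)$ kernels forces norm convergence at order $j$, with no weak compactness, no density argument, and no $\ell^1$ or $L^1(M)$ analysis. Your route (weak convergence plus convergence of norms) is viable: the weak convergence for $j \leq m$ is exactly how condition $(ii)$ of \thref{T:maintheorem1} was obtained, and the convergence of $\|\conj{\partial_z}^j k^b_z\|_b$ for $j<m$ does follow from the power-gain mechanism of \thref{L:ProofTheorem3BlaschkeNormConvLemma} and its measure analogue (domination off $S_z \cup E_z$ via \eqref{E:EzEstimate}, and a factor $\sup_{w \in S_z\cup E_z}|1-\conj{w}z|^{2m-2j} \to 0$ multiplying the bounded order-$m$ quantity on $S_z \cup E_z$). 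But observe that this lower-order $L^1$-convergence already follows from condition $(iii)$ of \thref{T:maintheorem1} at order $m$ alone, so the uniform-integrability consequence of the norm-convergence hypothesis that you invoke is superfluous; and the price of your route is having to re-run essentially all of the bookkeeping of the proof of \thref{T:maintheorem2} (the cross-terms $C_{k,l}$, the induction over the $P_{i,j}$ terms in the zero-free case, and the recombination for $b = B_1B_2$) at every order $j < m$, none of which is needed.
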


\begin{proof}
    Recall that we assume that if $z \in \Omega$ close enough to $1$, then the line segment between $z$ and $1$ is contained in $\Omega$. Then, since $|f^{(j)}(z) - f^{(j)}(1)| = |\int_1^z f^{(j+1)}(w)\, dw|$, we get
    \begin{align*}
    \limsup_{\substack{z \to 1 \\ z \in \Omega}} \|\conj{\partial_z}^j k^b_z - \conj{\partial_z}^j k^b_1\|_b &=\limsup_{\substack{z \to 1 \\ z \in \Omega}} \sup_{ f : \|f\|_b = 1} |f^{(j)}(z) - f^{(j)}(1)| \\ &\leq \limsup_{\substack{z \to 1 \\ z \in \Omega}} \| \conj{\partial_z}^{j+1} k^b_z\|_b \cdot |z-1|.
\end{align*}
Thus norm convergence $\conj{\partial_z}^j k^b_{z} \to \conj{\partial_z}^j k^b_1$ as $z \to 1$ in $\Omega$ is implied by mere boundedness of the norms $\| \conj{\partial_z}^{j+1} k^b_z\|_b$ for $z \in \Omega$, and a fortiori by the assumed norm convergence. The claim then follows by iteration down from case $j +1 = m$.
\end{proof}

Recall the definition of the intervals $E_z$ in \eqref{E:EzDef} and the sectors $S_z$ in \eqref{E:SEzDef}.

\begin{lem} \thlabel{L:TheoremBLemma3}
    Assume that the equivalent conditions of \thref{T:maintheorem1} are satisfied for some integer $m$. Then
    \begin{equation}
        \label{E:LimIntCond} \lim_{\substack{z \to 1, z \in \Omega}} \sum_n \frac{1-|a_n|^2}{|1-\conj{a_n}z|^{2m+2}} + \int_\T \frac{d\nu(\zeta)}{|1 - \conj{\zeta}z|^{2m+2}} + \int_\T \frac{-\log|b(\zeta)|}{|1 - \conj{\zeta}z|^{2m+2}} d\m(\zeta)
    \end{equation} exists if and only if  
    \begin{equation}
        \label{E:LimIntCondEz}
        \lim_{z \to 1, z \in \Omega} \sum_{n: a_n \in S_z} \frac{1-|a_n|^2}{|1-\conj{a_n}z|^{2m+2}} + \int_{E_z} \frac{d\nu(\zeta)}{|1 - \conj{\zeta}z|^{2m+2}} + \int_{E_z} \frac{-\log|b(\zeta)|}{|1 - \conj{\zeta}z|^{2m+2}} d\m(\zeta) = 0.
    \end{equation} 
\end{lem}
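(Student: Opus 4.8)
Write $K(z) := \sum_n \frac{1-|a_n|^2}{|1-\conj{a_n}z|^{2m+2}} + \int_\T \frac{d\nu(\zeta)}{|1 - \conj{\zeta}z|^{2m+2}} + \int_\T \frac{-\log|b(\zeta)|}{|1 - \conj{\zeta}z|^{2m+2}} d\m(\zeta)$ for the quantity in \eqref{E:LimIntCond}, and let $K_{E_z}(z)$ denote the same sum/integrals but with the summation restricted to $n$ with $a_n \in S_z$ and the integrals restricted to $\zeta \in E_z$. So we must show $\lim_{z \to 1} K(z)$ exists if and only if $\lim_{z \to 1} K_{E_z}(z) = 0$, where all limits are over $z \in \Omega$. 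The natural decomposition is $K(z) = K_{E_z}(z) + K_{E_z^c}(z)$, where $K_{E_z^c}(z)$ collects the complementary portions (the zeros $a_n \notin S_z$, the integral over $\T \setminus E_z$). So the plan reduces to establishing that $\lim_{z \to 1, z \in \Omega} K_{E_z^c}(z)$ \emph{always} exists under the hypotheses of \thref{T:maintheorem1}; given that, the stated equivalence is immediate, since $\lim K(z)$ exists iff $\lim K_{E_z}(z)$ exists, and when it exists it must be $0$ because along the radius $E_r = \emptyset$ so $K(r) = K_{E_r^c}(r)$, forcing the common limit of $K$ and $K_{E_z^c}$ to agree, whence $\lim K_{E_z}(z) = 0$.

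**Existence of the complementary limit.** To see that $\lim_{z\to 1,z\in\Omega} K_{E_z^c}(z)$ exists, I would introduce the boundary target
\[
K(1) := \sum_n \frac{1-|a_n|^2}{|1-\conj{a_n}|^{2m+2}} + \int_\T \frac{d\nu(\zeta)}{|1 - \conj{\zeta}|^{2m+2}} + \int_\T \frac{-\log|b(\zeta)|}{|1 - \conj{\zeta}|^{2m+2}} d\m(\zeta),
\]
which is finite by the condition $(iii)$ of \thref{T:maintheorem1} in its ``Ahern--Clark'' form (the first displayed reformulation after the statement of \thref{T:maintheorem1}), recalling that $(iii)$ of \thref{T:maintheorem1} implies this finiteness by Fatou as noted in the discussion preceding \eqref{E:EzEstimate}. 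I claim $\lim_{z\to1,z\in\Omega} K_{E_z^c}(z) = K(1)$. The point is that on the complementary region the kernels behave nicely: by the estimate \eqref{E:EzEstimate}, for $w \in \cD$ with $w \notin S_z \cup E_z$ we have $|1-\conj{w}z|^{-1} \le 4|1-w|^{-1}$, so the integrands/summands over the complement are dominated by $4^{2m+2}$ times the corresponding integrands/summands of $K(1)$, which form an $L^1(M)$ function in the notation $dM = \sum_n(1-|a_n|^2)\,d\delta_{a_n} + d\nu - \log|b|\,d\m$ introduced after \thref{T:maintheorem2}. Meanwhile, pointwise $\frac{1}{|1-\conj{w}z|^{2m+2}}\mathbbm{1}_{\cD \setminus (S_z \cup E_z)}(w) \to \frac{1}{|1-\conj{w}|^{2m+2}}$ as $z \to 1$ for $M$-a.e.\ $w$: indeed $w = 1$ has $M$-measure handled separately (or is excluded), and for $w \ne 1$ the indicator eventually equals $1$ since $S_z \cup E_z$ shrinks to $\{1\}$ as $z \to 1$ (because $\arg z \to 0$ forces $E_z$, and hence $S_z$, into an arbitrarily small arc around $1$). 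Then dominated convergence in $L^1(M)$ gives $K_{E_z^c}(z) \to K(1)$.

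**Assembling.** Combining the two steps: $K(z) = K_{E_z}(z) + K_{E_z^c}(z)$ with $K_{E_z^c}(z) \to K(1)$ always, so $\lim_{z\to1,z\in\Omega} K(z)$ exists if and only if $\lim_{z\to1,z\in\Omega} K_{E_z}(z)$ exists; and when the latter exists, evaluating along a radial sequence $z = r_k \to 1$ (for which $E_{r_k}$ and $S_{r_k}$ are empty by the convention following \eqref{E:EzDef}, so $K_{E_{r_k}}(r_k) = 0$) shows the limit of $K_{E_z}(z)$ over all of $\Omega$ must be $0$, and then of course $\lim K(z) = K(1)$. Conversely if $\lim_{z\to1,z\in\Omega} K_{E_z}(z) = 0$ then $\lim_{z\to1,z\in\Omega} K(z) = K(1)$ exists. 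This is exactly the claimed equivalence.

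**Anticipated obstacle.** The one delicate point is the dominated convergence argument: I need the single dominating function $w \mapsto 4^{2m+2} |1-w|^{-(2m+2)}$ to lie in $L^1(M)$, which is precisely the finiteness of $K(1)$, i.e.\ the Ahern--Clark-type part of condition $(iii)$; and I need to be slightly careful about whether the point mass at $w=1$ (which does not occur among the $a_n$, and which the measures $\nu$, $-\log|b|\,d\m$ could in principle charge) causes trouble---but finiteness of $K(1)$ forces $M(\{1\}) = 0$, so $w = 1$ is $M$-negligible and the pointwise convergence statement holds $M$-a.e.\ without further fuss. The rest is bookkeeping: verifying that the convention making $E_r = S_r = \emptyset$ for real $r$ is exactly what lets the radial sequence pin the limit to $0$, and checking that $S_z \cup E_z \to \{1\}$, which is immediate from the definitions \eqref{E:EzDef}--\eqref{E:SEzDef} since $\arg z \to 0$.
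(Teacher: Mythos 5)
Your proof is correct and essentially identical to the paper's: both decompose the quantity into the $E_z/S_z$ part and its complement, show via \eqref{E:EzEstimate} and dominated convergence (with the dominating function supplied by Fatou's lemma applied to condition $(iii)$ of \thref{T:maintheorem1}) that the complementary part always converges to the finite boundary value $J = K(1)$, and use the convention $E_r = S_r = \varnothing$ for radial $z$ to pin the limit of the $E_z$ part to zero. One cosmetic remark: $S_z$ is a full sector emanating from the origin, so it does not literally shrink to $\{1\}$, but the pointwise statement you actually need---that each fixed $w \neq 1$ eventually lies outside $S_z \cup E_z$---is correct.
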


\begin{proof}
    By condition $(iii)$ in \thref{T:maintheorem1} and Fatou's lemma, we obtain
    \[ J := \sum_n \frac{1-|a_n|^2}{|1-a_n|^{2m+2}} + \int_\T \frac{d\nu(\zeta)}{|1 - \zeta|^{2m+2}} + \int_\T \frac{-\log|b(\zeta)|}{|1 - \zeta|^{2m+2}} d\m(\zeta) < \infty.\] 
     According to \eqref{E:EzEstimate}, for any $z \in \D$ we have the estimate
    \[ \frac{1}{|1-\conj{w}z|} \leq \frac{4}{|1-w|}, \quad w \in \cD, w \not\in E_z \cup S_z.\] Dominated Convergence Theorem thus implies
    \begin{equation} \label{E:NotEzSzIntegralConv}
        \lim_{\substack{z \to 1, z \in \Omega}} \sum_{n: a_n \not\in S_z} \frac{1-|a_n|^2}{|1-\conj{a_n}z|^{2m+2}} + \int_{\T \setminus E_z} \frac{d\nu(\zeta)}{|1 - \conj{\zeta}z|^{2m+2}} + \int_{\T \setminus E_z} \frac{-\log|b(\zeta)|}{|1 - \conj{\zeta}z|^{2m+2}} d\m(\zeta) = J.
    \end{equation} For $z = r \in (0,1)$, we have by definition that $E_z = S_z = \varnothing$, and so if the limit in \eqref{E:LimIntCond} exists, it follows from \eqref{E:NotEzSzIntegralConv} that it must equal $J$. Then the difference of the quantities in \eqref{E:LimIntCond} and \eqref{E:NotEzSzIntegralConv} must tend to $0$ as $z \to 1$ in $\Omega$. This is precisely condition \eqref{E:LimIntCondEz}.
    Conversely, assuming \eqref{E:LimIntCondEz}, it follows immediately from \eqref{E:NotEzSzIntegralConv} that \eqref{E:LimIntCond} exists (and equals $J$).    
\end{proof}

As in the previous section, we will proceed by dividing the rest of the proof into the two cases: Blaschke products, and zero-free functions.

\subsection{Blaschke product case.}

Consider the space $\ell^1$ of summable unilateral sequences $x = \big( x(n)\big)_n$ indexed by natural numbers $n \geq 1$ and denote by $\| x \|_1 := \sum_n |x(n)|$ the usual norm in $\ell^1$.  It will be useful to introduce the sequences $A_{k,l,z}$ given by
\[ A_{k,l, z}(n) = \frac{1-|a_n|^2}{(1-\conj{a_n}z)^{1+k}(1-a_n\conj{z})^{1+l}}, \quad n \geq 1.\] Here $a_n$ are the zeros of the Blaschke product $b$. By the Blaschke condition $\sum_n 1-|a_n| < \infty$, the sequences $A_{k,l,z}$ are members of $\ell^1$. The condition $(iii)$ of \thref{T:maintheorem1} on $b$ reduces to \eqref{E:PartIIIforBlaschke} and it implies that
\[ \sup_{z \in \Omega, 1 \leq k,l \leq m} \|A_{k,l,z}\|_1 < \infty\] and \[ A_{k,l,1} \in \ell^1, \quad 1 \leq k,l \leq m.\] Moreover, we have the following norm convergence result.

\begin{lem} \thlabel{L:ProofTheorem3BlaschkeNormConvLemma}
If the equivalent conditions of \thref{T:maintheorem1} are satisfied for the Blaschke product $b$ and integer $m$, then for $k+l < 2m$ we have
    \[\lim_{\substack{z \to 1 \\ z \in \Omega}} \| A_{k,l, z} - A_{k,l,1}\|_1 = 0.\]
\end{lem}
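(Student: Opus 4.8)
I want to show that the $\ell^1$-valued family $\{A_{k,l,z}\}_{z\in\Omega}$ converges in norm to $A_{k,l,1}$ as $z\to 1$ in $\Omega$, whenever $k+l<2m$. The natural strategy is a standard dominated-convergence / uniform-integrability argument in $\ell^1$: pointwise (coordinate-wise) convergence is clear, so it suffices to control the "tail" of the sums uniformly. Concretely, I will split the index set according to whether $a_n\in S_z$ or not, as in the proof of \thref{L:TheoremBLemma3}.

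\emph{First I would handle the indices $n$ with $a_n\notin S_z$.} For these, the estimate \eqref{E:EzEstimate} gives $\tfrac{1}{|1-\conj{a_n}z|}\le\tfrac{4}{|1-a_n|}$, and hence
\[
|A_{k,l,z}(n)|\le 4^{k+l+2}\,\frac{1-|a_n|^2}{|1-a_n|^{k+l+2}}\le 4^{2m+2}\,\frac{1-|a_n|^2}{|1-a_n|^{2m+2}},
\]
the right-hand side being a fixed summable sequence by condition $(iii)$ of \thref{T:maintheorem1} together with Fatou's lemma. Since moreover $A_{k,l,z}(n)\to A_{k,l,1}(n)$ pointwise in $n$ as $z\to 1$, the Dominated Convergence Theorem (applied to counting measure on $\{n:a_n\notin S_z\}$, with the trivial extension that $n\notin S_z$ eventually for each fixed $n$ since $S_z$ shrinks toward $1$) yields that the $\ell^1$-norm of the restriction of $A_{k,l,z}-A_{k,l,1}$ to $\{n:a_n\notin S_z\}$ tends to $0$.

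\emph{Then I would handle the indices $n$ with $a_n\in S_z$.} Here the point is that $k+l<2m$, so $k+l+2\le 2m+1$, which we should convert into $k+l\le 2m-1$ (wait — I should instead directly bound the exponent). For $a_n\in S_z$ one has $|1-\conj{a_n}z|\gtrsim |1-z|$ (since $a_n$ lies in the sector $S_z$ the quantity $|1-\conj{a_n}z|$ is comparable to $\max(|1-a_n|,|1-z|)$, and in any case bounded below by a constant multiple of $|1-z|$), hence
\[
\sum_{n:a_n\in S_z}|A_{k,l,z}(n)|\le \frac{C}{|1-z|^{k+l-2m}}\sum_{n:a_n\in S_z}\frac{1-|a_n|^2}{|1-\conj{a_n}z|^{2m+2}},
\]
and since $k+l<2m$ the prefactor $|1-z|^{2m-k-l}\to 0$ while the remaining sum is bounded by $(iii)$ of \thref{T:maintheorem1}; so this contribution tends to $0$ as well. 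The same bound applies to $A_{k,l,1}$ restricted to $\{n:a_n\in S_z\}$ (using $|1-a_n|\ge|1-z|$ there). Combining the two regions gives $\|A_{k,l,z}-A_{k,l,1}\|_1\to 0$.

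\emph{Main obstacle.} The only genuinely delicate point is the lower bound $|1-\conj{a_n}z|\ge c|1-z|$ for $a_n\in S_z$: I need the geometry of the sector $S_z$ relative to the point $1$ to guarantee this uniformly in $z\in\Omega$ and in $n$. This should follow from elementary estimates on $E_z$ (the arc \eqref{E:EzDef} and its sector \eqref{E:SEzDef}) — essentially the same kind of computation underlying \eqref{E:EzEstimate} — but it must be stated and checked carefully, since it is exactly where the restriction $a_n\in S_z$ is used and where the exponent gap $k+l<2m$ is converted into a vanishing factor. Everything else is routine dominated convergence.
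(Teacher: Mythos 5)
Your overall architecture (pointwise convergence plus a split of the index set into $\{n: a_n\in S_z\}$ and its complement, with dominated convergence via \eqref{E:EzEstimate} on the complement) is the same as the paper's, and the complement part is correct. The gap is in the $S_z$ part, and it is exactly at the point you flagged as delicate. First, there is an internal inconsistency: you state the \emph{lower} bound $|1-\conj{a_n}z|\gtrsim|1-z|$, but the displayed inequality
\[
\frac{1}{|1-\conj{a_n}z|^{k+l+2}}=|1-\conj{a_n}z|^{2m-k-l}\cdot\frac{1}{|1-\conj{a_n}z|^{2m+2}}\le C\,|1-z|^{2m-k-l}\cdot\frac{1}{|1-\conj{a_n}z|^{2m+2}}
\]
requires the \emph{upper} bound $|1-\conj{a_n}z|\le C|1-z|$, since the exponent $2m-k-l$ is positive. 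Second, neither direction holds as a purely geometric fact about $S_z$. The lower bound fails for, say, $z=(1-\theta^{2})e^{i\theta}$ and $a_n=(1-\theta^{2})e^{i\theta}$ (both admissible for a tangential region), where $|1-\conj{a_n}z|\asymp\theta^{2}$ while $|1-z|\asymp\theta$; this also refutes the claimed comparability of $|1-\conj{a_n}z|$ with $\max(|1-a_n|,|1-z|)$. The upper bound fails because $S_z$ constrains only $\arg a_n$, not $|a_n|$: the point $a=\tfrac12 e^{i\arg z}$ lies in $S_z$ and has $|1-\conj{a}z|\approx\tfrac12$ while $|1-z|\to0$.

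What is actually needed — and what the paper proves — is that
\[
\lim_{\substack{z\to1\\ z\in\Omega}}\ \sup_{n\,:\,a_n\in S_z}|1-\conj{a_n}z|=0,
\]
and this is \emph{not} a consequence of the geometry of $S_z$ alone; it uses the Blaschke condition. Indeed, if the supremum stayed above some $\delta>0$ along a sequence $z_j\to1$, the estimate $|1-\conj{a}z|\le(1-|a|)+(1-|z|)+|\arg(\conj{a}z)|$ together with $|\arg a|\le 2|\arg z_j|\to0$ would force $1-|a|\ge\delta/2$ for infinitely many distinct zeros $a$, contradicting the fact that a Blaschke product has only finitely many zeros in any compact subdisk (and each fixed zero with nonzero argument eventually leaves $S_{z}$). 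Once this uniform smallness is in hand, your exponent-gap idea works verbatim with $\epsilon$ in place of $|1-z|$: the sum over $a_n\in S_z$ is at most $\epsilon^{2m-k-l}\sup_{z\in\Omega}\sum_n\frac{1-|a_n|^2}{|1-\conj{a_n}z|^{2m+2}}\to0$. (A minor additional point: your justification that $\sum_{n:a_n\in S_z}|A_{k,l,1}(n)|\to0$ via ``$|1-a_n|\ge|1-z|$ on $S_z$'' is also unfounded; it follows instead from dominated convergence, since each fixed $a_n$ eventually lies outside $S_z$.)
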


\begin{proof}
    Because for every $n$ it holds that $A_{k,l,z}(n) \to A_{k,l,1}(n)$ as $z \to 1$, it is a standard result in measure theory that the desired norm convergence will follow from
    \[ \lim_{\substack{z \to 1 \\ z \in \Omega}} \| A_{k,l, z} \|_1 = \| A_{k,l,1}\|_1.\] Now, this convergence is established by an argument completely analogous to the one used in the proof of \thref{L:TheoremBLemma3} if we can show that
    \begin{equation}
        \label{E:SufficientLimit}
        \lim_{\substack{z \to 1 \\ z \in \Omega}} \sum_{n : a_n \in S_z} \frac{1-|a_n|^2}{|1-\conj{a_n}z|^{2+k+l}} = 0.
    \end{equation}  To see this, we prove first that
    \begin{equation}
        \label{E:LimSupAnZestimate}
        \lim_{\substack{z \to 1 \\ z \in \Omega }} \sup_{n : a_n \in S_z} |1-\conj{a_n}z| = 0.
    \end{equation} Should this not be true, then there exists a positive number $\delta$, a sequence $(z_n)_n$ of points in $\Omega$ converging to $1$, and a sequence $(a^*_n)_n$ of distinct zeros of $b$, such that $a^*_n \in S_z$ and $\delta < |1-\conj{a^*_n} z_n|$ for all $n$. Now, the standard estimate
    \[|1-\conj{a^*_n}z_n| \leq (1-|a_n^*|) + (1-|z_n|) + |\arg (\conj{a^*_n}z_n)| \] shows that $1-|a_n^*|$ is bounded from below, since the other two terms on the right-hand side above tend to $0$ as $z \to 1$. This says that the Blaschke product has infinitely many distinct zeros in a disk centered at the origin of radius strictly less than $1$, which is impossible. This establishes \eqref{E:LimSupAnZestimate}. 
    Accordingly, given any $\epsilon > 0$, if $z \in \Omega$ is sufficiently close to $1$, then $\sup_{n : a_n \in S_z} |1-\conj{a_n}z| < \epsilon$, and hence
    \begin{align*}
        \limsup_{\substack{z \to 1 \\ z \in \Omega}} \sum_{n : a_n \in S_z} \frac{1-|a_n|^2}{|1-\conj{a_n}z|^{2+k+l}} &= \limsup_{\substack{z \to 1 \\ z \in \Omega }}  \sum_{n : a_n \in S_z} |1-\conj{a_n}z|^{2m-k-l}\frac{1-|a_n|^2}{|1-\conj{a_n}z|^{2m+2}} \\
        &\leq \epsilon^{2m-k-l} \sup_{z \in \Omega} \sum_n \frac{1-|a_n|^2}{|1-\conj{a_n}z|^{2m+2}}.
    \end{align*} Since $2m-k-l \geq 1$, we see from the above estimate that \eqref{E:SufficientLimit} holds, and so the proof is complete.
\end{proof}

We may now finish the proof of the equivalence of $(i)$ and $(ii)$ in \thref{T:maintheorem2} in the case $b$ is a Blaschke product. We go back to the equations \eqref{E:BlaschkeDerivativeKernelNormFormula} and \eqref{E:CklDef}, and note that \[ C_{k,l}(z) = \sum_j B_{k,l,z}(j) A_{k,l,z}(j)\] where by \thref{L:bjSupBoundLemma} the sequences
$$B_{k,l,z} = (B_{k,l,z}(j) \big)_j
=\left({m \choose k} {m \choose l} b^{(m-k)}_{j-1}(z) \conj{b^{(m-l)}_{j-1}(z)} \conj{a_j}^k a_j^l k!l!\right)_j
$$
are uniformly bounded for $z \in \Omega$ and converge in each coordinate as $z \to 1$ in $\Omega$. It follows from \thref{L:ProofTheorem3BlaschkeNormConvLemma} that for $k +l < 2m$, $C_{k,l}(z)$ converges as $z \to 1$ in $\Omega$. The remaining term on the right-hand side of \eqref{E:BlaschkeDerivativeKernelNormFormula} is
\[ C_{m,m}(z) = \sum_{j \geq 1} |b_{j-1}(z)|^2 |a_j|^{2m} (m!)^2 \frac{1-|a_j|^2}{|1-\conj{a_j}z|^{2m+2}}\] and it follows that this term converges if and only if $\partial_z^m \conj{\partial_z}^m \|k^b_z\|^2_b$ converges. By the inequality $|b(z)| \leq |b_{j-1}(z)|$ for all $z \in \D$ and the convergence $|b(z)| \to 1$ as $z \to 1$ in $\Omega$ (recall \thref{L:bjSupBoundLemma}), as well as the convergence $|a_j| \to 1$ as $j \to \infty$, we see that $C_{m,m}(z)$ converges as $z \to 1$ in $\Omega$ if and only if $\sum_j \frac{1-|a_j|^2}{|1-\conj{a_j}z|^{2m+2}}$ converges as $z \to 1$ in $\Omega$. Our proof of \thref{T:maintheorem2} in the Blaschke product case is thus complete after an invocation of \thref{L:TheoremBLemma1} and \thref{L:TheoremBLemma3}.

\subsection{Zero-free case.}
In order to prove \thref{T:maintheorem2} for $b$ zero-free, recall first the equation \eqref{E:muSKbExpressionLEibniz}, which we repeat here for convenience of the reader:
\begin{equation}
    \label{E:muSKbExpressionLEibnizRepeated}
    \int_\T \frac{d\mu(\zeta)}{|1-\conj{\zeta}z|^{2m+2}} = \sum_{i,j=0}^m P_{i,j}(z) \cdot \partial^i_z \conj{\partial_z}^j \|k^b_z\|_b^2.
\end{equation}
We note that 
\[ \lim_{\substack{z \to 1 \\ z \in \Omega}} P_{i,j}(z)\] exist by \thref{L:bZeroFreeSupBoundLemma} and the assumption of validity of the conditions in \thref{T:maintheorem1}, since $P_{i,j}(z)$ are products of derivatives of $b(z)$ and $S(|b(z)|^2)$ of order $m$ and below.

Assume now that the first condition in \thref{T:maintheorem2} is satisfied, i.e., we have the norm convergence $\conj{\partial_z}^m k^b_{z} \to \conj{\partial_z}^m k^b_1$ as $z \to 1$ in $\Omega$. By \thref{L:TheoremBLemma2}, for every pair of non-negative integers $i,j \leq m$, we have that
\begin{equation}
    \label{E:DerivKernelIPconv}
    \lim_{\substack{z \to 1 \\ z \in \Omega}} \partial^i_z \conj{\partial_z}^j \|k^b_z\|^2_b = \lim_{\substack{z \to 1 \\ z \in \Omega}} \ip{\conj{\partial_z}^i b_z}{\conj{\partial_z}^j k^b_z}_b
\end{equation} exist. Thus the right-hand side in \eqref{E:muSKbExpressionLEibnizRepeated} converges as $z \to 1$ in $\Omega$. Hence so does the left-hand side. By \thref{L:TheoremBLemma3}, we see that part $(ii)$ of \thref{T:maintheorem2} holds for zero-free $b$.

Now let us prove the converse implication $(ii) \Rightarrow (i)$. Proof proceeds by induction on $m$. Validity of the base case $m=0$ follows immediately from \eqref{E:muSKbExpression}, \thref{L:TheoremBLemma1} and \thref{L:TheoremBLemma3}, since the term $S(|b(z)|^2)$ converges as $z \to 1$ in $\Omega$. For $m > 0$, we go back to the formula \eqref{E:muSKbExpressionLEibnizRepeated}. It is not hard to see that condition $(ii)$ of \thref{T:maintheorem2} for some $m$ implies the same statement for $m - 1$. Then by the induction hypothesis we conclude that the limits in \eqref{E:DerivKernelIPconv} exists for $i,j \leq m-1$. For $i = m$ and $j < m$, the limit in \eqref{E:DerivKernelIPconv} exists by the weak convergence of $\conj{\partial_z}^m k^b_z$ and norm convergence of $\conj{\partial_z}^jk^b_z$. Thus we have shown that all but one term on both sides of the equation in \eqref{E:muSKbExpressionLEibnizRepeated} converge as $z \to 1$ in $\Omega$, the exceptional term being indexed by $i,j = m$. This term equals $S(|b(z)|^2) \partial^m_z \conj{\partial_z}^m \|k^b_z\|^2_b$, and it must of course also converge. Then so does $\partial^m_z \conj{\partial_z}^m \|k^b_z\|^2_b$, and the proof of the implication $(ii) \Rightarrow (i)$ of \thref{T:maintheorem2} is complete in the case of zero-free $b$.

\subsection{Combining the special cases into the general result.}

Consider, as before $b = B_1B_2$, where $B_1$ is a Blaschke product and $B_2$ is zero-free. We have just proved \thref{T:maintheorem2} for $B_1$ and $B_2$.

Assume that part $(i)$ of \thref{T:maintheorem1} holds for $b$. Arguing in the same way as we did in \eqref{E:FunctionalNormSubspaceEstimate} but for differences of kernels, we obtain that
\begin{equation}
    \label{E:KernelBiNormConv}
    \lim_{\substack{z, z' \to 1 \\ z \in \Omega}} \|\conj{\partial_z}^m k^{B_i}_z - \conj{\partial_z}^m k^{B_i}_{z'}\|_{B_i} = 0, \quad i = 1,2.
\end{equation} Hence $\conj{\partial_z}^m k^{B_i}_z$ converges in norm as $z \to 1$ in $\Omega$. By the special cases of \thref{T:maintheorem2} proved for $B_1$ and $B_2$, we see that part $(ii)$ in \thref{T:maintheorem2} holds for $b = B_1B_2$.

Conversely, assume that part $(ii)$ of \thref{T:maintheorem2} holds for $b = B_1B_2$. Then it holds for $B_1$ and $B_2$ separately, and so \eqref{E:KernelBiNormConv} holds. Applying the operator $\conj{\partial_z}^m$ to \[ k^b_z(w) = k^{B_1}_z(w) + B_1(w)\conj{B_1(z)}k^{B_2}_z(w)\] results in
\[ \conj{\partial_z}^m k^b_z(w) = \conj{\partial_z}^m k^{B_1}_z(w) + B_1(w)\sum_{i=0}^m {m \choose i} \conj{\partial_z}^{m-i} \conj{B_1(z)} \conj{\partial_z}^i k^{B_2}_z(w).\] It is known that $B_1$ is a bounded multiplier from $\h(B_2)$ into $\hb$ (see \cite[page 5]{sarasonbook}). By \thref{L:bjSupBoundLemma}, the coefficients $\conj{\partial_z}^{m-i} \conj{B_1(z)}$ converge as $z \to 1$ in $\Omega$. By the contractive containments $\h(B_i) \subset \hb$, \thref{L:TheoremBLemma2} and \eqref{E:KernelBiNormConv}, the terms $\conj{\partial_z}^m k^{B_1}_z$ and $\conj{\partial_z}^i k^{B_2}_z$ converge in norm in $\hb$ as $z \to 1$ in $\Omega$. Thus $\conj{\partial_z}^m k^b_z$ converges in norm as $z \to 1$ in $\Omega$, and the proof is complete.

\section{Proof of Theorems C and D}
\label{S:GeneralizationFailureSec}

In the proof we will use the following simple calculus lemma.

\begin{lem} \thlabel{L:calcLemma}
    Let $\rho$ be an increasing continuous function on $[0, \infty)$, continuously differentiable on $(0,\infty)$. Assume that
    \[ \lim_{x \to 0} \rho'(x) = 0. \]
    For all sufficiently small positive $x$ and $x^*$ satisfying $x < x^*$, we have
    \[ 2|e^{ix^*} - z| \geq \rho(x^*), \quad z = (1-\rho(x))e^{ix}.\]
\end{lem}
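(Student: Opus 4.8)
The plan is to bound $|e^{ix^*}-z|$ from below in two complementary ways — a \emph{radial} estimate that uses the fact that $z$ lies at distance $1-|z|=\rho(x)$ from the circle $\T$, and an \emph{angular} estimate that uses the length of the chord joining $e^{ix}$ to $e^{ix^*}$ — and then to combine them through a case split governed by the hypothesis $\rho'(t)\to 0$. Throughout, $x$ and $x^*$ are taken below a suitable threshold $\delta$, so that in particular $0<\rho(x)<1$ (hence $z\in\D$), $\theta:=x^*-x<\pi$, and the quantity $\epsilon=\epsilon(x^*):=\sup_{t\in(0,x^*]}\rho'(t)$ satisfies $\epsilon<1/\pi$; the last requirement can be met because $\rho'$ extends continuously to $[0,x^*]$ with value $0$ at the origin, so $\epsilon(x^*)\to 0$ as $x^*\to 0^+$.

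For the radial estimate, I would observe that since $|z|=1-\rho(x)$ the point of $\T$ nearest to $z$ is $e^{ix}$, whence $|e^{ix^*}-z|\ge 1-|z|=\rho(x)$; combined with the mean value bound $\rho(x^*)-\rho(x)=\rho'(\xi)\theta$ for some $\xi\in(x,x^*)$, so that $\rho'(\xi)\le\epsilon$, this yields $|e^{ix^*}-z|\ge\rho(x^*)-\epsilon\theta$. For the angular estimate, since $|e^{ix}-z|=\rho(x)=1-|z|\le|e^{ix^*}-z|$, the triangle inequality gives $2|e^{ix^*}-z|\ge|e^{ix^*}-e^{ix}|=2\sin(\theta/2)\ge\tfrac{2}{\pi}\theta$, where the last step is Jordan's inequality $\sin s\ge\tfrac{2}{\pi}s$ on $[0,\pi/2]$ applied to $s=\theta/2$.

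To finish I would split on the size of $\theta$ against the threshold $\rho(x^*)/(2\epsilon)$. If $\theta\le\rho(x^*)/(2\epsilon)$, then $\epsilon\theta\le\tfrac12\rho(x^*)$, so the radial estimate already gives $|e^{ix^*}-z|\ge\tfrac12\rho(x^*)$, i.e.\ $2|e^{ix^*}-z|\ge\rho(x^*)$. If instead $\theta>\rho(x^*)/(2\epsilon)$, the angular estimate gives $2|e^{ix^*}-z|\ge\tfrac{2}{\pi}\theta>\tfrac{\rho(x^*)}{\pi\epsilon}>\rho(x^*)$, the last inequality because $\epsilon<1/\pi$. In either case $2|e^{ix^*}-z|\ge\rho(x^*)$, which is the assertion.

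The only real difficulty — and the reason the factor $2$ in the statement is genuinely needed — is that neither elementary bound suffices on its own: the radial bound $\rho(x)$ undershoots $\rho(x^*)$ precisely because $\rho$ is increasing, while the angular bound, being proportional to $\theta=x^*-x$, collapses as $x\uparrow x^*$. The hypothesis $\lim_{x\to 0}\rho'(x)=0$ is exactly what repairs both defects: it forces the deficit $\rho(x^*)-\rho(x)$ to be small relative to $\theta$ (so the radial bound is almost enough when $\theta$ is small), and it simultaneously makes the angular bound overshoot $\rho(x^*)$ by the large factor $1/(\pi\epsilon)$ (so that bound wins when $\theta$ is not small); placing the crossover at $\theta\asymp\rho(x^*)/\epsilon$ balances the two regimes. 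A purely computational alternative would be to factor $e^{ix}$ out and estimate $(\cos\theta-1+\rho(x))^2+\sin^2\theta$ from below by $\rho(x^*)^2/4$ directly, but the two-bound approach is shorter and more transparent.
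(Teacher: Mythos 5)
Your proof is correct and rests on the same mechanism as the paper's: the hypothesis $\rho'(t)\to 0$ forces the chordal separation, which is comparable to $\theta=x^*-x$, to dominate the increment $\rho(x^*)-\rho(x)$. The paper packages the radial and angular lower bounds into the single quantity $D(x)=\tfrac12(x^*-x)+\rho(x)$ and concludes by noting $D'=-\tfrac12+\rho'<0$, whereas you keep the two bounds separate and finish with the mean value theorem and a case split on $\theta$ versus $\rho(x^*)/(2\epsilon)$ --- a purely presentational difference.
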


\begin{proof}
    We have \[2|e^{ix^*} - z| \geq \frac{1}{2}|x^* - x| + \rho(x) := D(x).\] Note that for $x < x^*$ we have $D'(x) = -\frac{1}{2} + \rho'(x)$ which, if $x$ is sufficiently small, is a negative quantity by the assumption on $\rho$. Thus $D$ is decreasing on the interval $(x,x^*)$, and therefore $\rho(x^*) = D(x^*) \leq D(x) \leq 2|e^{ix^*} - z|$.
\end{proof}

\begin{proof}[Proof of \thref{T:maintheorem3}]
    Since $\rho(x) = \int^x_0 \rho'(x) \, dx = o(x)$, by our assumption we can pick a decreasing sequence $(x_n)_n$ of positive real numbers which converges to $0$ and satisfies
    the two conditions
    \begin{equation}
        \label{E:RhoXnTwoCond1}
     \sum_n \frac{\rho^2(x_n)}{x^2_n} < \infty
    \end{equation}
    and
    \begin{equation}
        \label{E:RhoXnTwoCond2}
    x_{n+1} < x_n/2.
    \end{equation}
    We take $b$ to be the singular inner function
    \[ b(z) = \exp \Big(-\int_\T \frac{\zeta + z}{\zeta - z} d\nu(\zeta) \Big), \quad z \in \D\] where 
    \[ \nu = \sum_n \rho^2(x_n) \delta_{e^{ix_n} }\] and $\delta_{e^{ix_n}}$ is a point mass at $e^{ix_n} \in \T$. We will soon show that $\sup_{z \in \Omega_\rho} \|k^b_z\|_{\hb} < \infty$ holds. Assuming this, according to \thref{T:maintheorem1} (or the Duan-Li-Mashreghi Theorem) and Fatou's lemma, we have
    \[ \int_\T \frac{d\nu(\zeta)}{|1-\zeta|^2} < \infty.\] Moreover, $\lim_{\substack{z \to 1 \\ z \in \Omega_\rho}} b(z)$ exists. Noting that \[ \frac{b'(z)}{b(z)} = -\int_{\T} \frac{2 \zeta d\nu(\zeta)}{(\zeta-z)^2},\] to show that $\lim_{z \to 1, z \in \Omega_\rho} b'(z)$ does not exist, it will suffice to show that 
    \[ \lim_{\substack{z \to 1 \\ z \in \Omega_\rho}} \int_{\T} \frac{2\zeta d\nu(\zeta)}{(\zeta - z)^2} \] does not exist. To see this, we will check that the radial limit $z = r \to 1$ and the limit along the sequence $z_n = (1-\rho(x_n))e^{ix_n} \in \partial \Omega_\rho$ differ. For the case of the radial limit, we employ the inequality $\frac{1}{|\zeta - r|^2} \leq \frac{4}{|\zeta - 1|^2}$ and the Dominated Convergence Theorem to see that
    \[ \lim_{r \to 1} \int_{\T} \frac{2\zeta d\nu(\zeta)}{(\zeta - r)^2}  = \int_{\T} \frac{2\zeta d\nu(\zeta)}{(\zeta - 1)^2}.\] Recall the definition of the arcs $E_{z_n}$ in \eqref{E:EzDef}. This time, the Dominated Convergence Theorem and the estimate in \eqref{E:EzEstimate} show that
    \[ \lim_{n \to \infty} \int_{\T \setminus E_{z_n}} \frac{2\zeta d\nu(\zeta)}{(\zeta - z_n)^2}  = \int_{\T} \frac{2\zeta d\nu(\zeta)}{(\zeta - 1)^2},\] same as the radial limit. However, according to $(\ref{E:RhoXnTwoCond2})$, for $z_n := (1-\rho(x_n))e^{ix_n} \in \partial \Omega_\rho$, we have that $E_{z_n} \cap \supp \nu = \{e^{ix_n} \}$, and so 
    \[ \lim_{n \to \infty} \int_{ E_{z_n}} \frac{2\zeta d\nu(\zeta)}{(\zeta - z_n)^2}  = \lim_{n \to \infty} 2e^{-ix_n} = 2.\] Thus 
    \[ \lim_{n \to \infty} \int_{\T} \frac{2\zeta d\nu(\zeta)}{(\zeta - z_n)^2}  =  \lim_{r \to 1} \int_{\T} \frac{2\zeta d\nu(\zeta)}{(\zeta - r)^2} + 2,\] and it follows that     
    $\lim_{\substack{z \to 1 \\ z \in \Omega_\rho}} b'(z)$ does not exist. 
    
    It remains to show that $\sup_{z \in \Omega_\rho} \|k^b_z\|_b  < \infty$. According to \thref{T:maintheorem1}, we need to show the finiteness of
    \begin{equation}
        \label{E:SupQuantityNorm}
        \sup_{z \in \Omega_\rho} \int_\T \frac{d\nu(\zeta)}{|z-\zeta|^2} = \sup_{z \in \Omega_\rho} \sum_n \frac{\rho^2(x_n)}{|e^{ix_n} - z|^2}. 
    \end{equation} It will suffice to do this with the above suprema restricted to $z \in \partial \Omega_\rho$. Moreover, it will suffice to do this for all $z \in \Omega_\rho$ sufficiently close to $1$.

    Let then $z  = (1-\rho(x))e^{ix} \in \partial \Omega_\rho$. There exists a unique $m$ for which we have $x_{m+1} \leq x < x_m$. We will divide the sum $\sum_n \frac{\rho^2(x_n)}{|e^{ix_n} - z|^2}$ into terms index by $n \geq m+2$ and $n \leq m-1$, with special arguments for terms indexed by $n = m+1$ and $n=m$. 
    
    Let us treat the two special terms first. We will show that $|e^{ix_m} - z|$ and $|e^{ix_{m+1}}-z|$ are controlled from below by $\rho(x_m)$ and $\rho(x_{m+1})$ respectively. For $n=m+1$, note that since $\rho$ is increasing, $1-|z| = \rho(x) \geq \rho(x_{m+1})$ and so $|e^{ix_{m+1}}-z| \geq 1-|z| \geq \rho(x_{m+1})$. For $n = m$, apply \thref{L:calcLemma} to $x^* = x_m$ to conclude that $2|e^{ix_{m}} - z| \geq \rho(x_m)$. It follows from these two estimates that
    \[ \frac{\rho^2(x_m)}{|e^{ix_{m}} - z|^2} + \frac{\rho^2(x_{m+1})}{|e^{ix_{m+1}} - z|^2} \leq 5.\]

    Consider now the terms indexed by $n \geq m+2$. Then $x = \arg z$ lies further away from $x_n$ than $x_{m+1}$ does, and it follows that $|x_{m+1} - x_n| \leq |x_n - x| \leq 4|e^{ix_n} - z|$. Moreover, by \eqref{E:RhoXnTwoCond2} we have $2x_n < x_{m+1}$ which then implies that $x_n \leq |x_{m+1}- x_{n}| \leq 4|e^{ix_n} - z|$. We conclude that
    \[ \sum_{n \geq m+2} \frac{\rho^2(x_n)}{|e^{ix_{n}}-z|^2} \leq 16\sum_{n : n \geq m+2} \frac{\rho^2(x_n)}{x_n^2}.\] So this part of the sum is controlled by \eqref{E:RhoXnTwoCond1}.

    At last, consider the terms indexed by $n \leq m-1$. Then, similarly to the previous part, we get $4|z-e^{ix_{n}}| > |x_m - x_n| > x_n/2$ and so
    \[ \sum_{n : n \leq m-1} \frac{\rho^2(x_n)}{|e^{ix_{n}}-z|^2} \leq 64 \sum_{n : n \leq m-1} \frac{\rho^2(x_n)}{x_n^2}.\] 
    
    Our above estimates combine to show that $\sup_{z \in \Omega_\rho} \|k^b_z\|_b < \infty$.  
\end{proof}

Finally, let us now prove \thref{T:maintheorem4}. Take the logarithmic derivative of $b$ and combine the earlier-derived expressions \eqref{E:BlaschkeLogarithmicDerivative} and \eqref{E:LogarithmicDerivativeZeroFree} to obtain
\begin{equation}
    \label{E:LogarithmicDerivativeFull}
    \frac{b'(z)}{b(z)} = \sum_{n}\frac{1-|a_n|^2}{(z-a_n)(1-\conj{a_n}z)} -\int_\T \frac{2\zeta d\mu(\zeta)}{(\zeta - z)^2}.
\end{equation}
with $d\mu = -d\nu + \log |b| d\m$, as before. According to the standard measure theory result already mentioned at the beginning of the proof of \thref{L:ProofTheorem3BlaschkeNormConvLemma}, the right-hand side above will converge to $\sum_{n} \frac{1-|a_n|^2}{|1-\conj{a_n}|^2} - \int_{\T} \frac{2 \zeta d\mu(\zeta)}{(\zeta - 1)^2}$ as $z \to 1$ in $\Omega$ if we can show that
\[ \lim_{\substack{z \to 1 \\ z \in \Omega}} \sum_{n}\frac{1-|a_n|^2}{|z-a_n||1-\conj{a_n}z|} = \sum_{n}\frac{1-|a_n|^2}{|1-a_n|^2} \] 
and
\[ \lim_{\substack{z \to 1 \\ z \in \Omega}} \int_\T \frac{ d\mu(\zeta)}{|\zeta - z|^2} = \int_\T \frac{ d\mu(\zeta)}{|\zeta - 1|^2}\] Since 
\[ 1 = \lim_{\substack{z \to 1 \\ z \in \Omega}} |b(z)| \leq \limsup_{\substack{z \to 1 \\ z \in \Omega}} \frac{|z-a_n|}{|1-\conj{a_n}z|} \leq 1,\] it follows that 
\[ \lim_{\substack{z \to 1 \\ z \in \Omega}} \sum_{n}\frac{1-|a_n|^2}{|z-a_n||1-\conj{a_n}z|} = \lim_{\substack{z \to 1 \\ z \in \Omega}} \sum_{n}\frac{1-|a_n|^2}{|1-\conj{a_n}z|^2},\] if these limits exist. Now we may simply repeat the argument given in the proof of \thref{L:TheoremBLemma3} to establish the required convergence of the right-hand side above. Hence the right-hand side in \eqref{E:LogarithmicDerivativeFull} converges as $z \to 1$ in $\Omega$. Since the denominator $b(z)$ on the left-hand side also converges, we obtain the desired result.

\bibliographystyle{alpha}
\bibliography{mybib}

\end{document}